\newcommand{\R}{\mathbb{R}}
\newcommand{\C}{\mathbb{C}}
\newcommand{\Q}{\mathbb{Q}}
\newcommand{\Z}{\mathbb{Z}}
\newcommand{\ra}{\rightarrow}
\newcommand{\rdet}[1]{\operatorname{det}^{\operatorname{r}}_{\mathcal N#1}}
\newcommand{\tautwo}{\tau^{(2)}}
\newcommand{\slC}[1]{\operatorname{SL}(#1,\C)}
\newtheorem{thm}{Theorem}[section]
\newtheorem{cor}[thm]{Corollary}
\newtheorem{prop}[thm]{Proposition}
\newenvironment{ack}{{\noindent\textbf{Acknowledgement.}}\quad}
\newtheorem{lem}[thm]{Lemma}
\newtheorem{defn}[thm]{Definition}
\title{On the positivity of twisted $L^2$-torsion for 3-manifolds}
\author{Jianru Duan}
\begin{document}
\bibliographystyle{amsalpha}
\maketitle

\begin{abstract}
    For any compact orientable irreducible 3-manifold $N$ with empty or incompressible toral boundary, the twisted $L^2$-torsion is a non-negative function defined on the representation variety $\operatorname{Hom}(\pi_1(N),\slC n)$. The paper shows that if $N$ has infinite fundamental group, then the $L^2$-torsion function is strictly positive. Moreover, this torsion function is continuous when restricted to the subvariety of upper triangular representations.
\end{abstract}

\section{Introduction}
Let $N$ be a compact orientable irreducible 3-manifold with empty or incompressible toral boundary. The $L^2$-torsion of $N$ is a numerical topological invariant of $N$ that equals $\exp(\frac{\operatorname{Vol(N)}}{6\pi})$, where $\operatorname{Vol(N)}$ is the simplicial volume of $N$, see \cite[Theorem 4.3]{luck2002l2}. The idea of twisting is to use a linear representation of $\pi_1(N)$ to define more $L^2$-torsion invariants. 
The first attempt is made by Li and Zhang \cite{li2006l2, li2006alexander} in which they defined the $L^2$-Alexander invariants for knot complements, making use of the one dimensional representations of the knot group. Later Dubois, Friedl and L\"uck \cite{dubois2016l2} introduced the $L^2$-Alexander torsion for 3-manifolds which recovers the $L^2$-Alexander invariants. A recent breakthrough is made independently by Liu \cite{liu2017degree} and L\"uck \cite{luck2018twisting} that the $L^2$-Alexander torsion is always positive, and more interesting properties of the $L^2$-Alexander torsion are revealed in \cite{liu2017degree} and \cite{friedl2019l2}, for example, we now know that the  $L^2$-Alexander torsion is continuous and its limiting behavior recovers the Thurston norm of $N$.

Generally, let $\mathcal R_n(\pi_1(N)):=\operatorname{Hom}(\pi_1(N),\slC n)$ be the representation variety. One wishes to define $L^2$-torsion twisted by any representation $\rho\in\mathcal R_n(\pi_1(N))$, and we have this \emph{twisted $L^2$-torsion function} abstractly defined on the representation variety of $\pi_1(N)$:
\[\rho\longmapsto\tautwo(N,\rho)\in[0,+\infty),\quad \rho\in\mathcal R_n(\pi_1(N)).\]
A technical obstruction to defining a reasonable $L^2$-torsion is that the corresponding $L^2$-chain complex must be weakly $L^2$-acyclic and of determinant class (see definition \ref{DefinitionLtwoAcyclic}). If either condition is not satisfied, we define the $L^2$-torsion to be 0 by convention.

It is natural to question the positivity and continuity of this function. The first result of this paper is the following:
\begin{thm}\label{TheoremA}
Let $N$ be a compact orientable irreducible 3-manifold with empty or incompressible toral boundary. Suppose $N$ has infinite fundamental group, then the twisted $L^2$-torsion $\tautwo(N,\rho)$
is positive for any group homomorphism $\rho:\pi_1(N)\ra\slC n$.
\end{thm}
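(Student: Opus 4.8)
The strategy is to reduce the general case to the known positivity of the $L^2$-Alexander torsion (Liu, Lück) via a filtration of the representation. The key observation is that although $\rho:\pi_1(N)\to\slC n$ may be an arbitrary (possibly irreducible, non-unitary) representation, the $L^2$-torsion only depends on the induced $\Z\pi_1(N)$-module structure on $\C^n$ up to the regular-von-Neumann-algebra completion, and in particular the Fuglede--Kadison determinant that computes $\tautwo(N,\rho)$ behaves sub-multiplicatively along short exact sequences of coefficient systems. So first I would set up the general machinery: realize $\tautwo(N,\rho)$ as the Fuglede--Kadison determinant of the $L^2$-chain complex of the universal cover twisted by $\rho$, and record the two basic facts I need — (i) multiplicativity of $L^2$-torsion under short exact sequences of weakly acyclic determinant-class $L^2$-chain complexes, and (ii) invariance under enlarging the representation by a trivial summand or by passing to a composition series.

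Second, I would split into the two structurally different cases suggested by the abstract. If $N$ is hyperbolic (or more generally has a Gromov-norm-positive piece), one can use a CW-structure coming from an ideal triangulation / efficient triangulation and argue that the twisted $L^2$-chain complex is weakly acyclic and of determinant class because the untwisted one is, using that $\C^n$ with any $\slC n$-action has the same von Neumann dimension $n$ as the trivial one, and that determinant class is preserved under finite extensions of coefficients (this is where Friedl–Lück–type estimates enter). If $N$ is a graph manifold or Seifert-fibered, $\operatorname{Vol}(N)=0$ and the untwisted $L^2$-torsion is $1$; here I would use the torus boundary / JSJ structure to show directly that the twisted $L^2$-torsion is bounded below by a product of contributions of pieces, each of which is positive by an explicit computation on Seifert pieces (using $S^1$-actions, the $L^2$-torsion vanishing of $S^1\times(\text{surface})$-type pieces) and on hyperbolic JSJ pieces by the previous paragraph applied relatively.

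The main technical bridge is a \emph{filtration argument}: any $\rho:\pi_1(N)\to\slC n$ can, after restricting to suitable finite-index or normal subgroups and using the irreducibility/incompressibility hypotheses on $N$, be compared with a sum of one-dimensional (abelian) sub/quotient representations whose positivity is exactly the $L^2$-Alexander torsion result. Concretely: the composition series of $\C^n$ as a $\pi_1(N)$-module has abelian-by-finite subquotients after passing to a finite cover $\widetilde N\to N$ (by a standard application of Tits' alternative / triangularizability over a finite extension), the $L^2$-torsion is multiplicative under finite covers (dividing by the degree), multiplicative along the composition series, and each graded piece contributes an $L^2$-Alexander-type torsion that is positive by \cite{liu2017degree, luck2018twisting}. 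Assembling these gives $\tautwo(N,\rho)>0$.

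I expect the \textbf{main obstacle} to be the determinant-class condition rather than weak acyclicity: weak $L^2$-acyclicity follows fairly softly from $\chi(N)=0$, Poincaré duality, and the infinite fundamental group hypothesis (ruling out the $S^2\times S^1$-type exceptions), but showing the twisted $L^2$-chain complex is of determinant class — equivalently that the relevant Fuglede--Kadison determinants are nonzero and finite — for an \emph{arbitrary} non-unitary $\slC n$-representation requires care. The natural route is to bound the Mahler-measure-type integrals appearing in the determinant from below using the Liu/Lück positivity for the abelian subquotients together with an inductive estimate that a short exact sequence of determinant-class complexes has determinant-class total complex; the subtlety is that the middle term's determinant could a priori degenerate even when the ends do not, so one needs the full strength of the determinant-class inheritance lemma (as in Lück's book) applied with the specific chain homotopies coming from the 3-manifold's handle structure.
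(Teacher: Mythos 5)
Your proposal correctly identifies the two structural cases (graph manifolds versus hyperbolic/mixed, the latter handled via Agol--Przytycki--Wise virtual fibering and restriction multiplicativity), and your concern that determinant class is the hard part is also well founded. However, the central reduction you propose does not work, and it is worth being precise about why.

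You claim that, after passing to a finite cover, the composition series of $\C^n$ as a $\pi_1(N)$-module has abelian-by-finite subquotients, ``by a standard application of Tits' alternative / triangularizability over a finite extension.'' This is false in exactly the cases where the theorem has content. The Tits alternative says that a finitely generated linear group either is virtually solvable or contains a nonabelian free subgroup; for a hyperbolic $3$-manifold $N$ and a holonomy representation $\rho:\pi_1(N)\to\operatorname{SL}(2,\C)$, the image contains free subgroups and is irreducible, and it remains irreducible on every finite-index subgroup. No finite cover makes such a $\rho$ upper triangularizable, and the composition series of $\C^n$ then has simple pieces of dimension $>1$, so there are no one-dimensional subquotients to feed into the Liu/L\"uck $L^2$-Alexander positivity. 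This is precisely the boundary drawn in the paper: the filtration-and-multiplicativity argument (L\"uck's lemma on short exact sequences of coefficient systems) is used only to prove continuity on the subvariety of \emph{upper triangular} representations (Theorem \ref{TheoremB}), not positivity for arbitrary $\rho$.

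The paper's actual route to positivity in the hyperbolic/mixed case does not filter the representation at all. After passing to a fibered cover, the twisted boundary operators are block matrices over $\C G$ of the form $I - h\,\rho(h)$ and $I - h\,\rho(h)A_\rho$, where $\psi\in H^1(G;\R)$ dual to the fiber satisfies $\psi(h)=1$ and $A_\rho$ has entries in $\C[\ker\psi]$. The key observation is that Liu's Theorem \ref{LiuToolTheorem} applies to an \emph{arbitrary} square matrix over $\C G$, not only to $1\times 1$ matrices: the Alexander twist $\kappa(\psi,t)$ acts on the big $nk\times nk$ matrix directly, the resulting function of $t$ is multiplicatively convex (or identically zero), and since it equals $1$ for small $t$ it must be positive at $t=1$. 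So the ``Alexander twist'' is applied to the group-ring variable, while the $\slC n$-twisting is absorbed into the size of the matrix; there is no need, and indeed no way, to decompose $\rho$ itself into one-dimensional pieces. Your proposal would need to be rewritten around this point to have a chance of succeeding.
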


When $N$ is a graph manifold the twisted $L^2$-torsion function is explicitly computed in Theorem \ref{TorsionForGraph}. Other cases are dealt with in Theorem \ref{TorsionForHyper} where we only need to consider fibered 3-manifolds thanks to the virtual fibering arguments. We carefully construct a CW-structure for $N$ as in \cite{dubois2016l2} and observe that the matrices in the corresponding twisted $L^2$-chain complex are in a special form so that we can apply Liu's result \cite[Theorem 5.1]{liu2017degree} to guarantee the positivity of the Fuglede-Kadison determinant.

For continuity of the twisted $L^2$-torsion function, we have the following partial result:
\begin{thm}\label{TheoremB}
Let $N$ be a compact orientable irreducible 3-manifold with empty or incompressible toral boundary. Suppose $N$ has infinite fundamental group. Define  $\mathcal R^{\operatorname{t}}_n(\pi_1(N))$ to be the subvariety of $\mathcal R_n(\pi_1(N))$ consisting of upper triangular representations. Then the twisted $L^2$-torsion function
\[\rho\longmapsto \tautwo(N,\rho)\]
is continuous with respect to $\rho\in \mathcal R^{\operatorname{t}}_n(\pi_1(N))$.

\end{thm}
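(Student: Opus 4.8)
The plan is to reduce to the continuity of the one-dimensional $L^2$-Alexander torsion, which is already known from \cite{liu2017degree, friedl2019l2}. First I would pass to the diagonal characters of an upper triangular representation. Let $\rho\in\mathcal R^{\operatorname t}_n(\pi_1(N))$ have diagonal entries $\phi_1,\dots,\phi_n\colon\pi_1(N)\to\C^*$ (so $\phi_1\cdots\phi_n=1$), and let $0=V_0\subset V_1\subset\cdots\subset V_n=\C^n$ be the standard complete flag, a chain of $\pi_1(N)$-submodules for $\rho$ with $V_j/V_{j-1}\cong\C_{\phi_j}$. Since $C_*(\widetilde N)$ is a complex of free $\Z\pi_1(N)$-modules, forming the twisted $L^2$-chain complex is exact in the coefficient representation, so the flag yields for each $j$ a short exact sequence of finite Hilbert $\mathcal N(\pi_1(N))$-chain complexes
\[
0\longrightarrow C^{(2)}_*\bigl(N,\rho|_{V_{j-1}}\bigr)\longrightarrow C^{(2)}_*\bigl(N,\rho|_{V_j}\bigr)\longrightarrow C^{(2)}_*(N,\phi_j)\longrightarrow 0 .
\]
The complexes $C^{(2)}_*(N,\phi_j)$ are weakly $L^2$-acyclic and of determinant class by the positivity of the $L^2$-Alexander torsion \cite{liu2017degree, luck2018twisting}, so by induction on $j$ the long weakly exact $L^2$-homology sequence makes every $C^{(2)}_*(N,\rho|_{V_j})$ weakly $L^2$-acyclic, and the sum formula for $L^2$-torsion \cite[Theorem 3.35]{luck2002l2} shows that it is of determinant class and that $\tautwo(N,\rho|_{V_j})=\tautwo(N,\rho|_{V_{j-1}})\cdot\tautwo(N,\phi_j)$. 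Telescoping gives $\tautwo(N,\rho)=\prod_{j=1}^{n}\tautwo(N,\phi_j)$, and since $\rho\mapsto\phi_j$ is continuous on $\mathcal R^{\operatorname t}_n(\pi_1(N))$ it now suffices to prove that $\phi\mapsto\tautwo(N,\phi)$ is continuous on $\operatorname{Hom}(\pi_1(N),\C^*)=\operatorname{Hom}(H_1(N;\Z),\C^*)$.

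For the latter I would discard the unitary part of the character. Writing $\phi=|\phi|\cdot u$ with $|\phi|\colon\pi_1(N)\to\R_{>0}$ and $u\colon\pi_1(N)\to U(1)$, the diagonal unitary $D_u\colon\ell^2(\pi_1(N))\to\ell^2(\pi_1(N))$, $\delta_g\mapsto u(g)\delta_g$, sends each translation operator to a scalar multiple of itself, so conjugation by $D_u$ preserves $\mathcal N(\pi_1(N))$; being conjugation by a unitary it is trace-preserving. It carries the $|\phi|$-twisted $L^2$-chain complex of $N$ onto the $\phi$-twisted one, and since trace-preserving $*$-automorphisms preserve Fuglede--Kadison determinants we get $\tautwo(N,\phi)=\tautwo(N,|\phi|)$. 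Hence $\tautwo(N,\phi)$ factors through the continuous map $\phi\mapsto\log|\phi|\in\operatorname{Hom}(\pi_1(N),\R)=H^1(N;\R)$, on which the induced function $\psi\mapsto\tautwo(N,\exp\psi)$ is the $L^2$-Alexander torsion function of $N$, continuous by \cite{liu2017degree, friedl2019l2}. Composition of continuous maps then finishes the proof.

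The main obstacle is the bookkeeping in the first step. One must check that the flag filtration is compatible with the Hilbert $\mathcal N(\pi_1(N))$-structures coming from a fixed CW-structure on $N$ — the twisted differentials become block upper triangular, not block diagonal, which is exactly the situation the sum formula is designed to handle — and that every intermediate representation $\rho|_{V_j}$, which lands only in $\operatorname{GL}$ and not in $\operatorname{SL}$, still produces a determinant-class weakly $L^2$-acyclic complex; both are taken care of by the inductive use of the $L^2$-homology long exact sequence with one-dimensional positivity as input. Everything genuinely hard — virtual fibering, the graph-manifold computation, and Liu's determinant estimate \cite[Theorem 5.1]{liu2017degree} — is already absorbed into the cited positivity and continuity results for the $L^2$-Alexander torsion, so no new hard analysis is needed here; alternatively one could re-run the explicit fibered-manifold analysis behind Theorem \ref{TheoremA}, but the reduction above is more economical.
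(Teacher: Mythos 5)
Your reduction to the one-dimensional case is essentially identical to the paper's: both pass through the flag of $\rho$-invariant subspaces, use the additivity of $L^2$-torsion over the resulting short exact sequences of twisted $L^2$-chain complexes (\cite[Lemma 3.3]{luck2018twisting} in the paper, \cite[Theorem 3.35]{luck2002l2} in yours — same content), and then strip off the unitary part of each diagonal character via \cite[Theorem 4.1]{luck2018twisting}, arriving at $\tautwo(N,\rho)=\prod_{j}A^{(2)}(N,\phi_j,e)$ with $\phi_j=\log|\chi_j|\in H^1(N;\R)$ depending continuously on $\rho$. That part is fine.

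The gap is in the very last step, where you declare that $\psi\mapsto A^{(2)}(N,\psi,e)$ is continuous on $H^1(N;\R)$ ``by \cite{liu2017degree,friedl2019l2}.'' That is not what those references prove. Liu's theorem (\cite[Theorem 5.1]{liu2017degree}, quoted as Theorem \ref{LiuToolTheorem} in the paper) gives multiplicative convexity — hence continuity — of $t\mapsto A^{(2)}(N,\psi,t)$ for each \emph{fixed} $\psi$; since $A^{(2)}(N,s\phi_0,e)=A^{(2)}(N,\phi_0,e^s)$, this yields continuity only along rays through the origin of $H^1(N;\R)$, not continuity as $\psi$ varies in all directions. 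Joint continuity in $(\psi,t)$ is exactly what L\"uck conjectured in \cite[Chapter 10]{luck2018twisting}, and proving it (Theorem \ref{ContwrtCohomology}) is the paper's main technical contribution: it requires the Alexander multi-twist formalism and the multi-variable multiplicative convexity Theorem \ref{MultivariableConvexityOfDeterminant}, a genuine generalization of Liu's one-variable result obtained by an inductive change-of-coordinates trick (Proposition \ref{PropertyOfMultiTwist}, Proposition \ref{coordinate-convex}). Your ``alternative'' of re-running the fibered-manifold analysis behind Theorem \ref{TheoremA} also does not close the gap — that argument gives positivity of a single Fuglede--Kadison determinant, not convergence of determinants as the twisting class varies. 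So the structure of your proof is right, but it quietly outsources the hard new ingredient to the literature, where it does not yet exist.
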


The continuity of the twisted $L^2$-torsion function in general is open. It is mainly because the Fuglede-Kadison determinant of an arbitrary matrix over $\C[\pi_1(N)]$ is very difficult to compute. However, the $L^2$-torsion twisted by upper triangular representations are relatively simpler because we can reduce many problems to the one-dimensional case, which is well studied under the name of the $L^2$-Alexander torsion (see \cref{section 5}). We remark that the work of Benard and Raimbault \cite{benard2022twisted} based on the strong acyclicity property by Bergeron and Venkatesh \cite{bergeron2013asymptotic} shows that the twisted $L^2$-torsion function is positive and real analytic near any holonomy representation $\rho_0:\pi_1(N)\ra \slC2$ of a hyperbolic 3-manifold $N$.

The proof relies on the continuity of $L^2$-Alexander torsion with respect to the cohomology classes, which is conjectured by \cite[Chapter 10]{luck2018twisting}. This is done by introducing the concept of Alexander multi-twists (see section \ref{section 5}). One can similarly define the ``multi-variable $L^2$-Alexander torsion" and our argument essentially shows that the multi-variable function is multiplicatively convex (compare Theorem \ref{MultivariableConvexityOfDeterminant}), generalizing \cite[Theorem 5.1]{liu2017degree}. This then applies to show the continuity as desired.

The organization of this paper is as follows. In section \ref{section 2}, we introduce the terminology of this paper and some algebraic facts. In section \ref{section 3}, we define the twisted $L^2$-torsion for CW complexes and state some basic properties. In section \ref{section 4} we prove Theorem \ref{TheoremA} in two steps: first for graph manifold, then for hyperbolic or mixed manifold. In section \ref{section 5}, we begin with the $L^2$-Alexander torsion and then prove Theorem \ref{TheoremB}. ~\\

\begin{ack}
The author wishes to thank his advisor Yi Liu for guidance and many conversations.
\end{ack}

\section{Notations and some algebraic facts}\label{section 2}

In this section we define the twisting functor and introduce $L^2$-torsion theory. The reader can refer to \cite{luck2018twisting} where discussions are taken on in a more general setting.

\subsection{Twisting $\C G$-modules via $\slC n$ representations}

Let $G$ be a finitely generated group and let $\C G$ be its group ring. In this paper our main objects are finitely generated free left $\C G$-modules with a preferred ordered basis. We will abbreviate it as \emph{based $\C G$-modules} unless otherwise stated. A natural example of a based $\C G$-module is $\C G^m$ as a free left $\C G$-module of rank $m$, with the natural ordered basis $\{\sigma_1,\cdots,\sigma_m\}$ where $\sigma_i$ is the unit element of the $i^{\operatorname{th}}$ direct summand. Any based $\C G$-module is canonically isomorphic to $\C G^m$ for some non-negative integer $m$ and this identification is used throughout our paper.

We fix $V$ throughout this paper to be an $n$-dimensional complex vector space with a fixed choice of basis $\{e_i\}_{i=1}^n$. Let $\rho:G\ra \slC n$ be a group homomorphism, then $V$ can be viewed as a left $\C G$-module via $\rho$, in the following way:
\[\gamma\cdot e_i=\sum_{j=1}^n\rho(\gamma^{-1})_{i,j}\cdot e_j,\quad \gamma\in G\]
where $\rho(\gamma^{-1})\in \slC n$ is a square matrix. We extend this action $\C$-linearly so that $V$ is a left $\C G$-module. In other words, left action of $\gamma$ corresponds to right multiplication to the row coordinate vector of the matrix $\rho(\gamma^{-1})$. 

We are interested in twisting a based $\C G$-module via $\rho$. In literature, there are two different ways to twist a based $\C G$-module, namely the ``diagonal twisting" and the ``partial twisting"  (compare \cite{luck2018twisting}). They are naturally isomorphic. We only consider the diagonal twisting in this paper.
\begin{defn}
Recall that $\C G^m$ is a based $\C G$-module with a natural basis $\{\sigma_i\},\ i=1,\cdots,m$. We define $(\C G^m\otimes_\C V)_{\operatorname{d}}$ to be the  $\C G$-module with diagonal $\C G$-action, i.e.
\[(\C G^m\otimes_\C V)_{\operatorname{d}}\stackrel{\operatorname{set}}=\C G^m\otimes_\C V,\quad g\cdot(u\otimes v)=gu\otimes gv\]
for any $g\in G,\ u\in\C G^m$ and $v\in V$, and then extend $\C$-linearly to define a $\C G$-module structure. 
\end{defn}

With the definition above, we can see that
\[(\C G^m\otimes_\C V)_{\operatorname{d}}=\bigoplus_{i=1}^m(\C G\otimes_\C V)_{\operatorname{d}}\]
is a based $\C G$-module with a basis
\[\{\sigma_1\otimes e_1, \sigma_1\otimes e_2,\cdots,\sigma_1\otimes e_n,\sigma_2\otimes e_1,\cdots,\sigma_m\otimes e_n\}.\]

Let $\mathcal A$ be the category whose objects are finitely generated free left $\C G$-modules with a preferred ordered basis and whose morphisms are $\C G$-linear homomorphisms. We consider the following ``\emph{diagonal twisting}" functor
\[\mathcal D(\rho):\mathcal A\longrightarrow\mathcal A\]
which sends any object $M$ to the based $\C G$-module $(M\otimes_{\C} V)_{\operatorname{d}}$ and sends any morphism $f$ to $\mathcal D(\rho)f:=f\otimes_\C \operatorname{id_V}$. The following proposition describes how matrices behave under the twisting functor. 

\begin{prop}\label{MatrixRepPartialTwist}
Let $\rho:G\ra \slC n$ be any group homomorphism. Suppose that a homomorphism between based $\C G$-modules
\[f:\C G^{r}\longrightarrow \C G^{s}\]
is presented by a matrix $(\Lambda_{i,j})$ over $\C G$ of size $r\times s$, i.e., let \[\{\sigma_1,\cdots,\sigma_{r}\},\quad \{\tau_1,\cdots,\tau_{s}\}\] be the natural basis of $\C G^{r}$ and $\C G^{s}$ respectively, we have
\[f(\sigma_i)=\sum_{j=1}^{s} \Lambda_{i,j}\tau_j,\quad i=1,\cdots,r.\]
We form a new matrix $\Omega$ of size $nr\times ns$ by replacing each entry $\Lambda_{i,j}$ with an $n\times n$ square matrix $\Lambda_{i,j}\cdot \rho(\Lambda_{i,j})$. Then $\Omega$ is a matrix presenting the diagonal twisting morphism $\mathcal D(\rho)f$, under the natural basis
\[\{\sigma_1\otimes e_1,\cdots,\sigma_1\otimes e_n,\sigma_2\otimes e_1,\cdots,\sigma_{r}\otimes e_n\},\]
\[\{\tau_1\otimes e_1,\cdots,\tau_1\otimes e_n,\tau_2\otimes e_1,\cdots,\tau_{s}\otimes e_n\}\]
of the diagonal twisting based $\C G$-modules $\mathcal D(\rho)(\C G^r)$ and $\mathcal D(\rho)(\C G^s)$ respectively.
\end{prop}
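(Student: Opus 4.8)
The plan is to unwind both sides of the claimed matrix identity against the prescribed ordered bases and check that they agree basis-vector by basis-vector. First I would fix notation for the action of $G$ on $V$: by the convention in the excerpt, $\gamma\cdot e_k=\sum_{\ell}\rho(\gamma^{-1})_{k,\ell}\,e_\ell$, i.e.\ $\gamma$ acts on a row coordinate vector by right multiplication by $\rho(\gamma^{-1})$. Since $\C G$ is not commutative, one must be careful that $\Lambda_{i,j}$ is a $\C$-linear combination $\sum_\gamma c_\gamma^{(i,j)}\gamma$ of group elements, and the expression ``$\Lambda_{i,j}\cdot\rho(\Lambda_{i,j})$'' should be read as the $\C G$-matrix $\sum_\gamma c_\gamma^{(i,j)}\,\gamma\,\rho(\gamma^{-1})$ (extending $\rho$ $\C$-linearly on $\C G$), so I would first make that reading explicit, since otherwise $\rho(\Lambda_{i,j})$ is not defined.

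Next I would compute $\mathcal D(\rho)f$ applied to a basis vector $\sigma_i\otimes e_k$. By definition $\mathcal D(\rho)f=f\otimes_\C\operatorname{id}_V$, but the subtlety is that the tensor is over $\C$ while the $\C G$-module structure on the target is diagonal, so I cannot just move $e_k$ through $f$. Instead I would expand $f(\sigma_i)=\sum_j\Lambda_{i,j}\tau_j=\sum_j\sum_\gamma c_\gamma^{(i,j)}\gamma\tau_j$ and then write, inside $(\C G^s\otimes_\C V)_{\operatorname d}$,
\[
(f\otimes\operatorname{id})(\sigma_i\otimes e_k)=\sum_{j}\sum_{\gamma} c_\gamma^{(i,j)}\,(\gamma\tau_j)\otimes e_k
=\sum_{j}\sum_{\gamma} c_\gamma^{(i,j)}\,\gamma\cdot\bigl(\tau_j\otimes (\gamma^{-1}e_k)\bigr),
\]
using that $g\cdot(u\otimes v)=gu\otimes gv$ to pull $\gamma$ out diagonally, at the cost of acting by $\gamma^{-1}$ on $e_k$. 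Then $\gamma^{-1}e_k=\sum_\ell\rho(\gamma)_{k,\ell}e_\ell$, so the coefficient of the target basis vector $\tau_j\otimes e_\ell$ becomes $\sum_\gamma c_\gamma^{(i,j)}\,\gamma\,\rho(\gamma)_{k,\ell}$. Here I should double-check the direction of the inverse against the stated action: applying $\gamma\cdot e_k$ uses $\rho(\gamma^{-1})$, hence applying $\gamma^{-1}\cdot e_k$ uses $\rho(\gamma)$, so the $(k,\ell)$ entry is indeed $\rho(\gamma)_{k,\ell}$, and I would reconcile this with the statement's ``$\rho(\Lambda_{i,j})$'' — it forces the convention $\rho(\Lambda_{i,j})=\sum_\gamma c_\gamma^{(i,j)}\rho(\gamma)$ after the $\C$-linear extension, and the block entry is the Hadamard-type product $\sum_\gamma c_\gamma^{(i,j)}\gamma\cdot\rho(\gamma)_{k,\ell}$.

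Finally I would observe that this is exactly the $(k,\ell)$ entry of the $n\times n$ block $\sum_\gamma c_\gamma^{(i,j)}\,\gamma\,\rho(\gamma)$ sitting in block position $(i,j)$ of $\Omega$, with the rows of $\Omega$ indexed by $(i,k)$ in the order $\sigma_1\otimes e_1,\dots,\sigma_r\otimes e_n$ and columns by $(j,\ell)$ likewise; comparing with how a matrix presents a $\C G$-linear map under ordered bases, this shows $\Omega$ presents $\mathcal D(\rho)f$. The main obstacle, and the only place real care is needed, is bookkeeping the non-commutativity: making sure the group element $\gamma$ is multiplied on the correct side of the scalar matrix $\rho(\gamma)_{k,\ell}$, that the inverse lands on $\rho$ rather than its transpose, and that the row/column index ordering of the $nr\times ns$ matrix matches the diagonal basis ordering declared in the statement. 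Once the indexing conventions are pinned down the verification is a direct expansion; I would also note for later use (in the chain-complex applications) that $\mathcal D(\rho)$ is a functor, so it suffices to have checked this on generators and the composition rule $\Omega_{g\circ f}=\Omega_f\Omega_g$ is automatic.
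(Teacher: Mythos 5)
Your proposal is correct and takes essentially the same route as the paper: expand $\mathcal D(\rho)f(\sigma_i\otimes e_k)$, use the diagonal action to rewrite $(\gamma\tau_j)\otimes e_k=\gamma\cdot(\tau_j\otimes\gamma^{-1}e_k)$, expand $\gamma^{-1}e_k$ via $\rho(\gamma)$, and read off the coefficient of $\tau_j\otimes e_\ell$. One small note: your opening gloss of $\Lambda_{i,j}\cdot\rho(\Lambda_{i,j})$ as $\sum_\gamma c_\gamma^{(i,j)}\gamma\,\rho(\gamma^{-1})$ has the inverse on the wrong side, but you catch and correct this to the diagonal pairing $\sum_\gamma c_\gamma^{(i,j)}\gamma\,\rho(\gamma)$ in the course of the computation; your version is actually a bit more careful than the paper's, which writes $\Lambda_{i,l}^{-1}e_k$ as if $\Lambda_{i,l}$ were a single group element rather than a general element of $\C G$.
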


\begin{proof}
Let $\Phi=(\Phi_{i,j}),\ i=1,\cdots,r,\ j=1,\cdots,s$ be a block matrix of size $nr\times ns$, with each entry $\Phi_{i,j}$ an $n\times n$ matrix, such that $\Phi$ is the matrix presenting $\mathcal D(\rho)f$ under the natural basis. We only need to verify that $\Phi_{i,j}=\Lambda_{i,j}\cdot\rho(\Lambda_{i,j})$. The submatrix $\Phi_{i,j}$ can be characterized as follows. Let $\pi_j:\mathcal D(\rho)(\C G^r)\ra \mathcal D(\rho)(\C G)$ be the projection to the $j^{\operatorname{th}}$ direct component which is spanned by $\{(\sigma_j\otimes e_1)_{\operatorname{d}},\cdots,(\sigma_j\otimes e_n)_{\operatorname{d}}\}$. Then the following holds:
\[\pi_j\circ \mathcal D(\rho)f
\begin{pmatrix}
(\sigma_i\otimes e_1)_{\operatorname{d}} \\ \vdots \\ (\sigma_i\otimes e_n)_{\operatorname{d}}
\end{pmatrix}=\Phi_{i,j}
\begin{pmatrix}
(\tau_j\otimes e_1)_{\operatorname{d}} \\ \vdots \\ (\tau_j\otimes e_n)_{\operatorname{d}}
\end{pmatrix}.\]
On the other hand, for any $k=1,\cdots,n$, we have
\[
\begin{aligned}
\pi_j\circ \mathcal D(\rho)f((\sigma_i\otimes e_k)_{\operatorname{d}})&=\pi_j\left(\sum_{l=1}^{s} (\Lambda_{i,l}\tau_l\otimes e_k)_{\operatorname{d}}\right)\\
&=\pi_j\left(\sum_{l=1}^{s} \Lambda_{i,l}\cdot(\tau_l\otimes \Lambda_{i,l}^{-1}e_k)_{\operatorname{d}}\right)\\
&=\Lambda_{i,j}\cdot(\tau_j\otimes \Lambda_{i,j}^{-1}e_k)_{\operatorname{d}}\\
&=\Lambda_{i,j}\cdot\sum_{l=1}^n\rho(\Lambda_{i,j})_{k,l}(\tau_j\otimes e_l)_{\operatorname{d}}.
\end{aligned}
\]
This shows that $\Phi_{i,j}=\Lambda_{i,j}\cdot\rho(\Lambda_{i,j})$ and hence $\Phi=\Omega$.
\end{proof}

We now mention that the twisting functor can be naturally generalized to the category of \emph{based $\C G$-chain complexes}. More explicitly, let $C_*$ be a based $\C G$-chain complex, i.e.
\[C_*=(\cdots\longrightarrow C_{p+1}\stackrel{\partial_{p+1}}\longrightarrow C_p \stackrel{\partial_{p}}\longrightarrow C_{p-1} \longrightarrow \cdots)\]
is a chain of based $\C G$-modules with $\C G$-linear connecting morphisms $\{\partial_p\}$ such that $\partial_{p-1}\circ \partial_p=0$. We can apply the functor $\mathcal D(\rho)$ to obtain a new $\C G$-chain complex
\[\mathcal D(\rho) C_*=(\cdots\longrightarrow \mathcal D(\rho)C_{p+1}\stackrel{\mathcal D(\rho)\partial_{p+1}}\longrightarrow \mathcal D(\rho)C_p \stackrel{\mathcal D(\rho)\partial_{p}}\longrightarrow \mathcal D(\rho)C_{p-1} \longrightarrow \cdots)\]
with connecting homomorphisms $\{\mathcal D(\rho)\partial_p\}$.
If $f_*$ is a chain map between based $\C G$-chain complexes, the twisting chain map $\mathcal D(\rho)f_*$ is a $\C G$-chain map between the corresponding twisted chain complexes. So $\mathcal D(\rho)$ generalizes to be a functor of the category of based $\C G$-chain complexes.

\subsection{$L^2$-torsion theory}
Let 
\[l^2(G)=\Big\{\sum_{g\in G}c_g\cdot g\ \Big|\ c_g\in\C,\  \sum_{g\in G}|c_g|^2<\infty\Big\}\]
be the Hilbert space orthonormally spanned by all elements in $G$. Since $G$ is finitely generated, $l^2(G)$ is a separable Hilbert space with isometric left and right $\C G$-module structure. We denote by $\mathcal N(G)$ the \emph{group von Neumann algebra} of $G$ which consists of all bounded Hilbert operators of $l^2(G)$ that commute with the right $\C G$-action. We will treat $l^2(G)$ as a left $\mathcal N(G)$-module and a right $\C G$-module. The \emph{$l^2$-completion} of a based $\C G$-chain complex $C_*$ is then a \emph{Hilbert $\mathcal N(G)$-chain complex} defined as 
\[l^2(G)\otimes_{\C G}C_*\]
and the $l^2$-completions of the connecting homomorphism $\partial$ and chain map $f$ are $\operatorname{id}\otimes_{\C G} \partial$ and $\operatorname{id}\otimes_{\C G} f$ respectively. Note that each chain module of $l^2(G)\otimes_{\C G}C_*$ is simply a direct sum of $l^2(G)$:
\[l^2(G)\otimes_{\C G}C_p=l^2(G)\otimes_{\C G}\C G^{r_p}=l^2(G)^{r_p}\]
where $r_p$ is the rank of $C_p$.

The $l^2$-completion process converts a based $\C G$-chain complex into a finitely generated, free Hilbert $\mathcal  N(G)$-chain complex.

\begin{defn}\label{DefinitionLtwoAcyclic}
A finitely generated, free Hilbert $\mathcal N (G)$-chain complex is called \emph{weakly acyclic} if the $l^2$-Betti numbers are all trivial. A finitely generated, free Hilbert $\mathcal N (G)$-chain complex is \emph{of determinant class} if all the Fuglede-Kadison determinants of the connecting homomorphisms are positive real numbers.
\end{defn}

\begin{defn}
Let $C_*$ be a finitely generated, free Hilbert $\mathcal N (G)$-chain complex. Suppose $C_*$ is of finite length, i.e., there exists an integer $N>0$ such that $C_p=0$ for $|p|>N$. Furthermore, if $C_*$ is weakly acyclic and of determinant class, we define the \emph{$L^2$-torsion} of $C_*$ to be the alternating product of the Fuglede-Kadison determinants of the connecting homomorphisms:
\[\tau^{(2)}(C_*)=\prod_{p\in\Z}(\operatorname{det}_{\mathcal N(G)}\partial_p)^{(-1)^p}.\]
Otherwise, we artificially set $\tautwo(C_*)=0$.
\end{defn}

We recommend \cite{luck2002l2} for the definition of the $L^2$-Betti number and the Fuglede-Kadison determinant. We remark that our notational convention follows \cite{dubois2015l2,dubois2016l2,liu2017degree}, and the exponential of the torsion in \cite{luck2002l2,luck2018twisting} is the multiplicative inverse of our torsion.

Let $A$ be a $p\times p$ matrix over $\mathcal N(G)$. The \emph{regular Fuglede-Kadison determinant} of $A$ is defined to be 
\[\rdet G(A)=\left\{
\begin{array}{ll}
\operatorname{det}_{\mathcal{N}(G)}(A),     & \text{if $A$ is full rank of determinant class,}  \\
0,     &  \text{otherwise.}
\end{array}\right.
\]

We will need the following two lemmas in order to do explicit calculations, the proof can be found in \cite[Lemma 2.6, Lemma 3.2]{dubois2015l2} combining with the basic properties of the Fuglede-Kadison determinant (see \cite[Theorem 3.14]{luck2002l2}).

\begin{lem}\label{MahlerMasureFormula}
Let $\Z^k$ be a free Abelian subgroup of $G$ generated by $z_1,\cdots,z_k$. Let $A$ be a $p\times p$ matrix over $\C \Z^k$. Identify $\C \Z^k$ with the $k$-variable Laurent polynomial ring $\C[z_1^{\pm},\cdots,z_k^{\pm}]$. Denote by $p(z_1,\cdots,z_k)$ the ordinary determinant of $A$, then 
\[\rdet G(A)=\operatorname{Mah}(p(z_1,\cdots,z_k))\]
where $\operatorname{Mah}(p(z_1,\cdots,z_k))$ is the Mahler measure of the polynomial $p(z_1,\cdots,z_k)$.
\end{lem}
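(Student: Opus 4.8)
The plan is to prove the identity in two steps: first reduce the computation of $\rdet{G}(A)$ to a computation over the von Neumann algebra $\mathcal N(\Z^k)$ of the subgroup, and then evaluate the latter by Fourier analysis on the $k$-torus, where the Mahler measure enters directly.

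For the reduction: the inclusion $\Z^k\hookrightarrow G$ induces an isometric embedding $l^2(\Z^k)\hookrightarrow l^2(G)$ realizing $\mathcal N(\Z^k)$ as a von Neumann subalgebra of $\mathcal N(G)$, on which $\operatorname{tr}_{\mathcal N(G)}$ restricts to $\operatorname{tr}_{\mathcal N(\Z^k)}$ --- indeed, for $a=\sum_g c_g g\in\C\Z^k$ one has $\operatorname{tr}(a)=c_e$ regardless of the ambient group. Since $A$, hence $A^*A$, has entries in $\C\Z^k$, the spectral projections of $A^*A$ all lie in $M_p(\mathcal N(\Z^k))$, and therefore the spectral density function of $A$ is the same whether computed over $\mathcal N(G)$ or over $\mathcal N(\Z^k)$. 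In particular $A$ is full rank (resp.\ of determinant class) over $\mathcal N(G)$ if and only if it is over $\mathcal N(\Z^k)$, and in that case $\rdet{G}(A)=\rdet{\Z^k}(A)$; this is the restriction property of the Fuglede--Kadison determinant, \cite[Theorem 3.14]{luck2002l2}.

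To compute $\rdet{\Z^k}(A)$, I would pass to the Fourier picture $\mathcal N(\Z^k)\cong L^\infty(T^k)$, $l^2(\Z^k)\cong L^2(T^k)$, in which $z_j$ becomes the $j$-th coordinate function on $T^k=(S^1)^k$, the trace becomes integration against normalized Haar measure $\mu$, and $A$ becomes pointwise multiplication by a matrix-valued function $\widehat A\colon T^k\to M_p(\C)$ whose entries are the restrictions of the entries of $A$; correspondingly $A^*A$ acts as pointwise multiplication by $\widehat A(\theta)^*\widehat A(\theta)$. Writing $p=\det A\in\C[z_1^{\pm},\dots,z_k^{\pm}]$ for the determinant polynomial, one has $\det\widehat A(\theta)=p(\theta)$. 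Viewing $A^*A$ as a measurable direct integral of the finite positive matrices $\widehat A(\theta)^*\widehat A(\theta)$, its spectral density function is $\lambda\mapsto\int_{T^k}\#\{i:\mu_i(\theta)\le\lambda\}\,d\mu(\theta)$, where $\mu_1(\theta)\ge\cdots\ge\mu_p(\theta)\ge 0$ are the eigenvalues of $\widehat A(\theta)^*\widehat A(\theta)$. If $p\equiv 0$, then $\widehat A(\theta)$ is singular for every $\theta$, so $\dim_{\mathcal N(\Z^k)}\ker A=\int_{T^k}\dim\ker\widehat A(\theta)\,d\mu>0$, $A$ is not full rank, and both sides of the claimed identity equal $0$ (with the convention $\operatorname{Mah}(0)=0$). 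If $p\not\equiv 0$, then the zero set of $p$ in $T^k$ is a proper real-analytic subset, hence $\mu$-null, so for a.e.\ $\theta$ all the $\mu_i(\theta)$ are positive and $\prod_{i=1}^p\mu_i(\theta)=|\det\widehat A(\theta)|^2=|p(\theta)|^2$; thus $A$ is full rank and, from the definition of the Fuglede--Kadison determinant,
\[\ln\det_{\mathcal N(\Z^k)}(A)=\tfrac12\int_{0^+}^{\infty}\ln\lambda\,dF_{A^*A}(\lambda)=\tfrac12\int_{T^k}\sum_{i=1}^{p}\ln\mu_i(\theta)\,d\mu(\theta)=\int_{T^k}\ln|p(\theta)|\,d\mu(\theta)=\ln\operatorname{Mah}(p),\]
where $F_{A^*A}$ is the spectral density function just described. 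The last integral is finite exactly when $A$ is of determinant class; if it is $-\infty$, both sides are again $0$. Together with the reduction step this gives $\rdet{G}(A)=\operatorname{Mah}(p(z_1,\dots,z_k))$.

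The only delicate point is the measure theory in the second step: one must justify that the pointwise-multiplication operator is a measurable direct integral, so that $\operatorname{tr}_{\mathcal N(\Z^k)}\big(h(A^*A)\big)=\int_{T^k}\operatorname{Tr}_{M_p(\C)}\big(h(\widehat A(\theta)^*\widehat A(\theta))\big)\,d\mu(\theta)$ for bounded Borel $h$, and then check that the Fuglede--Kadison determinant --- which by construction discards the kernel via the integration from $0^+$ --- nonetheless coincides with $\int_{T^k}\ln|p|\,d\mu$ even when $p$ vanishes somewhere on $T^k$, which works precisely because $\{p=0\}$ is then a null set. The reduction step is purely formal. All of this is classical, and can alternatively be extracted from \cite[Lemma 2.6 and Lemma 3.2]{dubois2015l2} together with \cite[Theorem 3.14]{luck2002l2}.
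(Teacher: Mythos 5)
Your argument is correct, and it is essentially the standard proof. The paper itself gives no proof of this lemma; it simply cites \cite[Lemma~2.6 and Lemma~3.2]{dubois2015l2} together with \cite[Theorem~3.14]{luck2002l2}, and your two steps --- (i) restrict from $\mathcal N(G)$ to $\mathcal N(\Z^k)$ via compatibility of traces/spectral density functions, and (ii) compute $\rdet{\Z^k}(A)$ by diagonalizing $\mathcal N(\Z^k)\cong L^\infty(T^k)$ and identifying the logarithm of the determinant with $\int_{T^k}\log|\det\widehat A|\,d\mu$ --- are precisely what those references do. One small overcomplication worth noting: when $p\not\equiv 0$ the integral $\int_{T^k}\log|p|\,d\mu$ is automatically finite (the Mahler measure of a nonzero Laurent polynomial is always a positive real number), so the case you flag where $A$ is full rank but not of determinant class never actually occurs for matrices over $\C\Z^k$; including it does no harm, but it is vacuous.
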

\begin{lem}
\label{MatrixCalculationTorsion}
Let
\[D_*=(0\longrightarrow \C G^j\stackrel{C}\longrightarrow \C G^k \stackrel{B}\longrightarrow \C G^{k+l-j} \stackrel{A}\longrightarrow\C G^l \longrightarrow 0 )\]
be a complex.  Let $L\subset\{1,\cdots, k + l - j\}$ be a subset of size $l$ and $J \subset \{1,\cdots, k\}$ a
subset of size $j$. We write
\[A(J):=\text{rows in $A$ corresponding to $J$.}\]
\[
\begin{aligned}
    B(J,L):=&\text{ result of deleting the columns of $B$ corresponding to $J$}\\
    &\text{ and deleting the rows corresponding to $L$.}
\end{aligned}
\]
\[C(J):=\text{columns of $C$ corresponding to $L$.}\]

View $A,B,C$ as matrices over $\mathcal N(G)$. If $\rdet G(A(J))\not=0$ and $\rdet G(C(L))\not=0$, then
\[\tau^{(2)}(l^2(G)\otimes_{\C G}D_*)=\rdet G(B(J,L))\cdot\rdet G(A(J))^{-1}\cdot\rdet G(C(L))^{-1}.\]
\end{lem}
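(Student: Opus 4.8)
The plan is to strip $D_*$ down to a single square matrix by two successive Gaussian-elimination steps, each realized by splitting off a weakly acyclic two-term subcomplex, and to keep track of the torsion by repeatedly invoking the sum formula for $L^2$-torsion along a based short exact sequence of finite Hilbert $\mathcal N(G)$-chain complexes (the content extracted from \cite[Lemma 2.6, Lemma 3.2]{dubois2015l2} together with the properties of the Fuglede--Kadison determinant in \cite[Theorem 3.14]{luck2002l2}). Write $\overline{C_*}=l^2(G)\otimes_{\C G}C_*$ for the $l^2$-completion. I would use repeatedly that the torsion of a two-term complex $0\to\C G^m\stackrel{\Delta}\longrightarrow\C G^m\to0$ placed in chain degrees $(d,d-1)$ equals $\rdet G(\Delta)^{(-1)^{d}}$. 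As a harmless normalization I first reorder the ordered bases of $\C G^k$ and $\C G^{k+l-j}$ so that $J$ is the set of the first $j$ basis vectors of $\C G^k$ and $L$ is the set of the last $l$ basis vectors of $\C G^{k+l-j}$; a permutation matrix is unitary over $\C G$, so it has regular Fuglede--Kadison determinant $1$, and the two block decompositions below then produce honestly based short exact sequences, which is the setting of the sum formula.

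First I split off the bottom. Decompose $\C G^{k+l-j}=\C G^{k-j}\oplus\C G^{l}$ along the chosen basis; the restriction of $A$ to the $\C G^{l}$ summand is the square submatrix $A(J)$ of $A$ figuring in the hypothesis, and since $\C G^{l}$ is the last chain module the two-term complex $0\to\C G^{l}\stackrel{A(J)}\longrightarrow\C G^{l}\to0$ is a based subcomplex $F_*\subset D_*$ in degrees $1$ and $0$. Because $\rdet G(A(J))\neq 0$, the completion $\overline{F_*}$ is weakly acyclic, of determinant class, with torsion $\rdet G(A(J))^{-1}$. The quotient is the three-term complex
\[D'_*=\big(0\longrightarrow \C G^{j}\stackrel{C}\longrightarrow \C G^{k}\stackrel{B'}\longrightarrow \C G^{k-j}\longrightarrow 0\big),\]
where $B'$ is $B$ with the columns indexed by $L$ removed. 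The sum formula for $0\to F_*\to D_*\to D'_*\to 0$ then gives $\tautwo(\overline{D_*})=\rdet G(A(J))^{-1}\cdot\tautwo(\overline{D'_*})$.

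Next I perform the mirror-image step at the top. Decompose $\C G^{k}=\C G^{j}\oplus\C G^{k-j}$ along the chosen basis. One checks that the $\C G^{k-j}$ summand in degree $2$ together with all of $\C G^{k-j}$ in degree $1$ forms a based subcomplex
\[P_*=\big(0\longrightarrow \C G^{k-j}\stackrel{B(J,L)}\longrightarrow \C G^{k-j}\longrightarrow 0\big)\]
of $D'_*$ in degrees $2$ and $1$, whose single differential is the leftover square block $B(J,L)$ of $B$; the corresponding quotient is $Q_*=\big(0\to\C G^{j}\stackrel{C(L)}\longrightarrow\C G^{j}\to0\big)$ in degrees $3$ and $2$, with differential the square block $C(L)$ of $C$. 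Since $\rdet G(C(L))\neq 0$, the completion $\overline{Q_*}$ is weakly acyclic, of determinant class, with torsion $\rdet G(C(L))^{-1}$, while $\overline{P_*}$, having its only differential in degree $2$, has torsion $\rdet G(B(J,L))$. The sum formula for $0\to P_*\to D'_*\to Q_*\to 0$ yields $\tautwo(\overline{D'_*})=\rdet G(B(J,L))\cdot\rdet G(C(L))^{-1}$, and composing with the previous step gives the asserted identity. In the degenerate case $\rdet G(B(J,L))=0$ the complex $P_*$ fails to be weakly acyclic or of determinant class, hence so do $D'_*$ and $D_*$ in turn, and both sides of the formula are $0$.

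The verifications that $F_*$ and $P_*$ are genuinely based subcomplexes with exactly these differentials --- which is what forces the particular blocks $A(J)$, $C(L)$, $B(J,L)$ to appear --- are short computations once the bases have been reordered, using $CB=0$ and $BA=0$; likewise the signs fall out of the parities $(-1)^0,(-1)^1,(-1)^2$ in the definition of $\tautwo$. The part that I expect to need the most care, and the only genuinely nontrivial input, is the invocation of the sum formula: it must be used in the strong form in which weak acyclicity and the determinant class condition for any two of the three complexes in a based short exact sequence force them for the third (which is what legitimizes the ``vanish simultaneously'' clause above), and one must match up the based short exact sequences and degree conventions correctly. Everything else is formal.
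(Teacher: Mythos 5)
Your argument is correct, and it is essentially the standard proof one would find by unwinding the cited references: the paper itself supplies no proof of this lemma, instead delegating it to \cite[Lemma 2.6, Lemma 3.2]{dubois2015l2} together with \cite[Theorem 3.14]{luck2002l2}, and what you have written is a clean reconstruction of exactly that argument. The key moves --- reordering bases so that the chosen index sets sit in a block, splitting off a two-term acyclic based subcomplex at the bottom with differential $A(L)$ to reduce to a three-term complex, then splitting that into the block $B(J,L)$ and the block $C(J)$ via a second based short exact sequence, and threading the degree parities $(-1)^1,(-1)^2,(-1)^3$ through the sum formula --- are all correct, including your remark on the degenerate case $\rdet G B(J,L)=0$. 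One small point worth noting in passing: the lemma as printed has its $J$'s and $L$'s scrambled (rows of $A$ are indexed by $\{1,\dots,k+l-j\}$, so the square minor should be $A(L)$; columns of $C$ are indexed by $\{1,\dots,k\}$, so the minor should be $C(J)$; and $B(J,L)$ should delete the \emph{rows} in $J$ and \emph{columns} in $L$). You navigate this correctly --- the blocks you actually extract are the ones that make dimensional sense --- but you reproduce the paper's notation verbatim, so if this were going into a write-up you would want to fix the index sets to match what you actually compute.
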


\section{Twisted $L^2$-torsion for CW complexes}\label{section 3}
Let $X$ be a finite CW complex with fundamental group $G$. Denote by $\widehat X$ the universal cover of $|X|$ with the natural CW complex structure coming from $X$. Choose a lifting $\widehat\sigma_i$ for each cell $\sigma_i$ in the CW structure of $X$. The deck group $G$ acts freely on the cellular chain complex of $\widehat X$ on the left, which makes the $\C$-coefficient cellular chain complex $C_*(\widehat X)$ a based $\C G$-chain complex with basis $\{\widehat\sigma_i\}$. Recall that $\rho:G\ra \slC n$ is any group homomorphism. 

For future convenience, we introduce the concept of \emph{admissible triple} for higher dimensional linear representations, generalizing the admissibility condition in \cite{dubois2015l2}.
\begin{defn}[Admissible triple]
Let $\gamma:G\ra H$ be a homomorphism to a countable group $H$. We say that $(G,\rho;\gamma)$ forms an \emph{admissible triple} if $\rho:G\ra\slC n$ factors through $\gamma$, i.e., for some homomorphism $\psi:H\ra \slC n$, the following diagram commutes:
\[\xymatrix{
G \arrow[r]^\gamma \arrow[rd]_\rho & H \arrow[d]^\psi \\
& \slC n
}\]
\end{defn}

\begin{defn}
Let $(G,\rho;\gamma)$ be an admissible triple. Consider $l^2(H)$ as a left Hilbert $\mathcal N(H)$-module, and a right $\C G$-module induced by $\gamma$. Define the $L^2$-chain complex of $X$ twisted by $(G,\rho;\gamma)$ to be the following Hilbert $\mathcal{N}(H)$-chain complex
\[C^{(2)}_*(X,\rho;\gamma):=l^2(H)\otimes_{\C G} \mathcal D(\rho)C_*(\widehat X).\]
We define the $L^2$-torsion of $X$ twisted by $(G,\rho;\gamma)$ as
\[\tautwo(X,\rho;\gamma):=\tautwo(C^{(2)}_*(X,\rho;\gamma)).\]
\end{defn}
\begin{prop}\label{InvarianceOfBasis}
The definition of $\tautwo(X,\rho;\gamma)$ with respect to any admissible triple $(G,\rho;\gamma)$ does not depend on the order or orientation of the basis $\{\sigma_i\}$, nor the choice of lifting $\{\widehat\sigma_i\}$. Moreover, let $\rho':G\ra \slC n$ be conjugate to $\rho$, i.e., there exists a matrix $T\in\slC n$, such that $\rho'=T\cdot \rho\cdot T^{-1}$. Then $(G,\rho';\gamma)$ is also an admissible triple and $\tautwo(X,\rho;\gamma)=\tautwo(X,\rho';\gamma)$.
\end{prop}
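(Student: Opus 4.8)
The plan is to treat the three independence claims—ordering, orientation, choice of lifting—and the conjugation invariance as separate elementary reductions, each of which amounts to showing that the twisted $L^2$-chain complex is changed only by an isometric (or more generally, determinant-preserving) isomorphism of Hilbert $\mathcal N(H)$-chain complexes. First I would recall the general principle, implicit in \cite{luck2002l2}, that the $L^2$-torsion $\tau^{(2)}(C_*)$ depends only on the isometric isomorphism type of the Hilbert $\mathcal N(H)$-chain complex; more usefully, if two based $\C G$-chain complexes differ by an isomorphism whose matrix in each degree is, after $l^2$-completion, of regular Fuglede--Kadison determinant $1$, then the twisted torsions agree (this follows from the multiplicativity of the Fuglede--Kadison determinant under composition and the fact that such a change-of-basis isomorphism contributes a telescoping product of determinants equal to $1$). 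The work is then to exhibit, in each case, such an isomorphism and check its determinant.

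For reordering the basis $\{\sigma_i\}$ and for reversing the orientation of a cell, the change-of-basis matrix over $\C G$ in the relevant degree is a permutation matrix, respectively a diagonal matrix with a single entry $-1$; after applying $\mathcal D(\rho)$, Proposition \ref{MatrixRepPartialTwist} tells us the effect is to replace each scalar entry $\lambda$ by the $n\times n$ block $\lambda\cdot\rho(\lambda)$, and since here $\lambda\in\{0,\pm 1\}\subset\C$ we get $\rho(1)=I_n$ (as $1\in G$), so the twisted change-of-basis matrix is again a signed permutation matrix, which is orthogonal and has $\rdet{(H)}=1$. For the choice of lifting, replacing $\widehat\sigma_i$ by $g_i\widehat\sigma_i$ for some $g_i\in G$ multiplies the $i$-th basis element by $g_i$; the change-of-basis matrix is diagonal with unit entries $g_i\in G\subset\C G$, and after twisting becomes block-diagonal with blocks $g_i\cdot\rho(g_i)$, each of which is the image of $g_i$ under the left regular representation of $\C G$ tensored with $\rho(g_i)\in\slC n$; its $l^2$-completion is a unitary operator on $l^2(H)^n$, so again $\rdet{(H)}=1$. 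In all three cases the twisted chain complex is carried to a new one by a degreewise isometric isomorphism commuting with the differentials up to these unit-determinant factors, so the torsion is unchanged; the weak acyclicity and determinant-class conditions are likewise preserved, so the convention $\tau^{(2)}=0$ is respected on the degenerate locus.

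For the conjugation statement, first note that if $\rho=\psi\circ\gamma$ then $\rho'=T\rho T^{-1}=(c_T\circ\psi)\circ\gamma$ where $c_T$ is conjugation by $T$ on $\slC n$, so $(G,\rho';\gamma)$ is indeed admissible. The key step is to build a $\C G$-linear (indeed $\C$-linear on the $V$ factor) isomorphism of chain complexes $\mathcal D(\rho)C_*(\widehat X)\to\mathcal D(\rho')C_*(\widehat X)$ induced by the map $V\to V$, $v\mapsto Tv$: concretely, on $(\C G\otimes_\C V)_{\mathrm d}$ it is $u\otimes v\mapsto u\otimes Tv$, which one checks intertwines the two diagonal $\C G$-actions because of the conjugation relation between $\rho$ and $\rho'$. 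In the natural basis of $\mathcal D(\rho)(\C G^r)$ this isomorphism is block-diagonal with each block equal to the constant matrix $T\in\slC n$, so its $l^2$-completion has regular Fuglede--Kadison determinant $\rdet{(H)}(T\otimes\mathrm{id})=|\det T|=1$ (using Lemma \ref{MahlerMasureFormula} with $k=0$, or directly that for a scalar matrix over $\C$ the Fuglede--Kadison determinant over $\mathcal N(H)$ is the ordinary absolute value of the determinant). Hence this degreewise isomorphism again changes the torsion by a telescoping product of $1$'s, giving $\tau^{(2)}(X,\rho;\gamma)=\tau^{(2)}(X,\rho';\gamma)$.

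The main obstacle, such as it is, is bookkeeping: one must verify carefully that the proposed maps in each case really are chain maps—i.e. that they commute with $\mathcal D(\rho)\partial_p$—and that Proposition \ref{MatrixRepPartialTwist} is being applied with the correct conventions (left action versus right multiplication on row vectors, and the placement of inverses), since a sign or transpose error there would propagate. A secondary point requiring a line of justification is that the relevant change-of-basis operators are not merely invertible but have determinant exactly $1$ after $l^2$-completion; for the lifting and conjugation cases this is where one invokes that unitary operators have Fuglede--Kadison determinant $1$ and that scalar matrices over $\C$ behave as expected, both of which are standard properties from \cite[Theorem 3.14]{luck2002l2}.
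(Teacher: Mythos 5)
Your proof takes essentially the same route as the paper's: in each case you identify the change-of-basis operator on the twisted chain complex and show it has regular Fuglede--Kadison determinant $1$ after $l^2$-completion, so that multiplicativity of the determinant leaves the torsion unchanged. The reordering/orientation and conjugation steps are handled correctly.

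One step in the lifting case is wrongly justified. You assert that the twisted change-of-basis block $\gamma(g_i)\cdot\rho(g_i)$ is a \emph{unitary} operator on $l^2(H)^n$. This is false in general: $\rho(g_i)\in\slC n$ has determinant $1$ but need not be unitary (e.g.\ $\operatorname{diag}(2,1/2)\in\slC 2$), so $\gamma(g_i)\otimes\rho(g_i)$ is typically not an isometry. The conclusion that the Fuglede--Kadison determinant equals $1$ is still correct, but the argument should be the same one you give for the conjugation case: factor the operator as $\bigl(\gamma(g_i)\otimes I_n\bigr)\circ\bigl(\operatorname{id}\otimes\rho(g_i)\bigr)$; the first factor is unitary and hence has Fuglede--Kadison determinant $1$, while the second is a constant matrix over $\C$ whose Fuglede--Kadison determinant is $|\det\rho(g_i)|=1$ by unimodularity, and multiplicativity then gives $1$ for the composite. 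The unitarity shortcut is available only for the signed permutation matrices arising from reordering and reorientation; for both the lifting and the conjugation cases what is really being used is $|\det|=1$ for constant $\slC n$ matrices, which is the point of the unimodularity hypothesis and of the appeal to \cite[Theorem 3.14]{luck2002l2}. Replacing the unitarity claim by this two-factor computation closes the gap and makes the mechanism behind all four invariances uniform.
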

\begin{proof}
The property of being weakly $L^2$-acyclic does not depend on the choices in the statement. We only need to analyze how these choices change the Fuglede-Kadison determinant of the connecting morphisms.

Abbreviate by $C_*(\widehat X,\rho):=\mathcal D(\rho)C_*(\widehat X;\C)$ the diagonal twisting chain complex.
Suppose the based cellular chain complex of $\widehat X$ has the form
\[C_*(\widehat X)=(\cdots\longrightarrow \C G ^{r_{i+1}} \stackrel{\partial_{i+1}}\longrightarrow \C G ^{r_{i}} \stackrel{\partial_i}\longrightarrow \C G ^{r_{i-1}}\longrightarrow\cdots)\]
where $\partial_i$ is an $r_i\times r_{i-1}$ matrix over $\C G$ for all $i$, 
then the diagonal twisting chain complex $C_*(\widehat X,\rho)$ has the form
\[C_*(\widehat X,\rho)=(\cdots\longrightarrow \C G ^{nr_{i+1}} \stackrel{\partial^\rho_{i+1}}\longrightarrow \C G ^{nr_{i}} \stackrel{\partial_i^\rho}\longrightarrow \C G ^{nr_{i-1}}\longrightarrow\cdots)\]
where $\partial^\rho_i=\mathcal D(\rho)\partial_i$ is an $nr_i\times nr_{i-1}$ matrix over $\C G$ for all $i$. An explicit formula for $\partial^\rho_i$ is presented in Proposition \ref{MatrixRepPartialTwist}.
Then the $L^2$-chain complex of $X$ twisted by $(G,\rho;\gamma)$ has the form
\[C^{(2)}_*(X,\rho;\gamma)=(\cdots\longrightarrow l^2(H)^{nr_{i+1}} \stackrel{\gamma(\partial^\rho_{i+1})}\longrightarrow l^2(H)^{nr_{i}} \stackrel{\gamma(\partial_i^\rho)}\longrightarrow l^2(H)^{nr_{i-1}}\longrightarrow\cdots),\]
the notation $\gamma(\partial^\rho_i)$ means applying the group homomorphism $\gamma$ to each monomial of any entry of the matrix $\partial^\rho_i$, resulting in a matrix over $\C H\subset\mathcal N(H)$.

We now analyze how the choices affect the value of $\tautwo(X,\rho;\gamma)$. 
If the basis of $C_i(X)$ is permuted, and the orientations are changed, then $\gamma(\partial^\rho_i)$ and $\gamma(\partial^\rho_{i+1})$ change by multiplying a permutation matrix, with entries $\pm1$.

If one choose another lifting $g\widehat\sigma$ instead of $\widehat\sigma$ for some $g\in G$, then $\gamma(\partial^\rho_i)$ and $\gamma(\partial^\rho_{i+1})$ change by multiplying a block matrix in the following form:
\[\begin{pmatrix}
I^{n\times n} & & & &\\
 & \ddots & & & \\
  & & \rho(g)^{\pm1}\cdot I^{n\times n} & &\\
  & & & \ddots &\\
 & & & & I^{n\times n}
\end{pmatrix}.\]

If one replace $\rho$ by $\rho'=T\cdot \rho \cdot T^{-1}$ for a matrix $T\in \slC n$, the corresponding connecting homomorphism is in the following form:
\[\gamma(\partial^{\rho'}_i)=
\begin{pmatrix}
T & & \\
 & \ddots &  \\
  & & T \\
\end{pmatrix}
\gamma(\partial_i^\rho)
\begin{pmatrix}
T^{-1} & & \\
 & \ddots &  \\
  & & T^{-1} \\
\end{pmatrix}\]

In all cases, the regular Fuglede-Kadison determinant of $\gamma(\partial^\rho_i)$ and $\gamma(\partial^\rho_{i+1})$ are unchanged by basic properties of Fuglede-Kadison determinant, see \cite[Theorem 3.14]{luck2002l2}.
\end{proof}

Note that the ``moreover" part of the previous lemma tells us that we don't need to worry about the different choices of the base point when identifying the fundamental group $\pi_1(X)$ with $G$.

\begin{lem}\label{TorusTorsionFormula}
Let $T$ be a two-dimensional torus. For any admissible triple $(T,\rho:\pi_1(T)\ra\slC n;\gamma:\pi_1(T)\ra H)$, if $\operatorname{im} \gamma$ is infinite, then
\[\tautwo(T,\rho;\gamma)=1.\]
\end{lem}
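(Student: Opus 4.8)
The plan is to build an explicit CW-structure on the torus $T$, write down the twisted $L^2$-chain complex, and compute its torsion directly. Take the standard CW-structure with one $0$-cell, two $1$-cells corresponding to commuting generators $x,y$ of $\pi_1(T)\cong\Z^2$, and one $2$-cell attached along the commutator $[x,y]$. The cellular chain complex of the universal cover is then
\[
C_*(\widehat T)=\bigl(0\longrightarrow \C G \xrightarrow{\ \partial_2\ } \C G^2 \xrightarrow{\ \partial_1\ } \C G\longrightarrow 0\bigr),
\]
with $\partial_1=\begin{pmatrix}x-1\\ y-1\end{pmatrix}$ (up to sign conventions) and $\partial_2=\bigl(\,1-y\ \ \ x-1\,\bigr)$, coming from the Fox derivatives of $[x,y]$. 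After applying $\mathcal D(\rho)$ and then $\gamma$, Proposition \ref{MatrixRepPartialTwist} tells us that each scalar entry $\Lambda$ is replaced by the $n\times n$ block $\gamma(\Lambda)\cdot\psi(\Lambda)$ where $\psi:H\to\slC n$ is the factoring homomorphism; so the twisted complex is a length-three complex of the form $0\to\C H^n\xrightarrow{D_2}\C H^{2n}\xrightarrow{D_1}\C H^n\to 0$.

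The key step is to apply Lemma \ref{MatrixCalculationTorsion} with $j=l=n$, $k=2n$. I would choose $J$ to be the first $n$ columns of $D_2$ (equivalently, the block corresponding to the generator $\gamma(x)\psi(x)$-type term coming from $1-y$) and $L$ the last $n$ rows, so that $A(J)$ and $C(L)$ are each a single $n\times n$ block of the form $\gamma(g)\psi(g)-\psi(1)$-type expressions — concretely $A(J)$ is the block presenting multiplication by $x-1$ twisted, i.e. $\gamma(x)\psi(x)-I$ or $I-\gamma(y)\psi(y)$ depending on the choice. The first thing to verify is that these $n\times n$ matrices over $\C H$ have nonzero regular Fuglede–Kadison determinant. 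Here one uses that $\im\gamma$ is infinite: restricting to the (infinite cyclic, or at least infinite) subgroup of $H$ generated by $\gamma(x)$ or $\gamma(y)$, Lemma \ref{MahlerMasureFormula} identifies the relevant determinant with a Mahler measure of a nonzero Laurent polynomial, which is strictly positive; combined with the induction/restriction property of the Fuglede–Kadison determinant this gives $\rdet{(H)}(A(J))\neq 0$ and $\rdet{(H)}(C(L))\neq 0$. (One has to handle the case where $\gamma(x)$ and $\gamma(y)$ might individually have finite order but together generate an infinite group; then one picks the generator whose image is infinite, after possibly changing basis of $\Z^2$.)

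With $A(J)$ and $C(L)$ in hand, Lemma \ref{MatrixCalculationTorsion} gives
\[
\tautwo(T,\rho;\gamma)=\rdet{(H)}(B(J,L))\cdot\rdet{(H)}(A(J))^{-1}\cdot\rdet{(H)}(C(L))^{-1},
\]
where $B(J,L)$ is the $n\times n$ block obtained from $D_1$ by deleting the chosen rows and columns — this will be the twisted block for the \emph{other} generator, say $\gamma(y)\psi(y)-I$ (or $I-\gamma(x)\psi(x)$). The final step is to observe that, because $x$ and $y$ commute in $\pi_1(T)$ and hence their images commute in $H$ and in $\slC n$, the three surviving blocks are, up to the conjugation and unit-monomial ambiguities already shown harmless in Proposition \ref{InvarianceOfBasis}, matrices of the same shape — in fact $A(J)$ and $B(J,L)$ are literally the twisted multiplication-by-$(x-1)$ block and $C(L)$ the twisted multiplication-by-$(y-1)$ block (or vice versa), so two of the three determinants coincide and cancel one of the inverse factors, and the remaining pair also agree. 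Concretely I expect $\rdet{(H)}(B(J,L))=\rdet{(H)}(C(L))$ and the leftover $\rdet{(H)}(A(J))^{-1}$ to itself equal $1$, or more robustly that the alternating product telescopes to $1$ by a direct Mahler-measure computation: each block's determinant is the Mahler measure of $\det(\gamma(z)\psi(z)-I)$ for $z\in\{x,y\}$, and the alternating product $\mathrm{Mah}(\cdot)/\mathrm{Mah}(\cdot)\cdot\mathrm{Mah}(\cdot)^{-1}$ built from the same polynomial(s) equals $1$.

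The main obstacle I anticipate is the nonvanishing claim for the $n\times n$ twisted blocks when $\im\gamma$ is infinite but neither $\gamma(x)$ nor $\gamma(y)$ generates an obviously nice subgroup — one may need to pass to a finite-index subgroup of $\Z^2\cong\pi_1(T)$ on which things are cyclic, use that the torsion is unchanged (or changes predictably) under such restriction, and invoke Lemma \ref{MahlerMasureFormula} there; care is also needed because $\psi(x),\psi(y)$ are arbitrary commuting $\slC n$ matrices, so $\det(tI-\psi(z))$ could a priori be identically... but it cannot, since $\psi(z)\in\slC n$ has finite-order-or-infinite eigenvalues and the characteristic polynomial is monic of degree $n$, hence the relevant Laurent polynomial $\det(\gamma(z)\psi(z)-I)\in\C[\im\gamma]$ is genuinely nonzero once $\gamma(z)$ has infinite order. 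Making this dichotomy clean is the crux; everything else is bookkeeping already licensed by Proposition \ref{InvarianceOfBasis} and the two computational lemmas.
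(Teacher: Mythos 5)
Your proposal follows essentially the same route as the paper: take the standard CW-structure on $T$ with one $0$-cell, two $1$-cells and one $2$-cell, write down the twisted $L^2$-chain complex using Proposition~\ref{MatrixRepPartialTwist}, establish that the block $\gamma(e_i)\rho(e_i)-I$ has nonzero regular Fuglede--Kadison determinant via the Mahler-measure lemma, and then read off the torsion by cancellation. Your concern about generators with finite-order image is handled more simply than you suggest: since $\gamma(e_1)$ and $\gamma(e_2)$ commute and together generate $\im\gamma$, an infinite image forces at least one of them to have infinite order, so the ``assume WLOG $\gamma(e_1)$ has infinite order'' step is legitimate with no passage to finite-index subgroups.

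There is, however, a concrete gap in the final computation. Lemma~\ref{MatrixCalculationTorsion} is stated for a \emph{four}-term complex $0\to\C G^{j}\xrightarrow{C}\C G^{k}\xrightarrow{B}\C G^{k+l-j}\xrightarrow{A}\C G^{l}\to 0$, and its formula has three determinant factors. With your proposed indices $j=l=n$, $k=2n$, the third module would have rank $k+l-j=2n$, so the lemma's complex would be $\C H^n\to\C H^{2n}\to\C H^{2n}\to\C H^n$ --- not the three-term torus complex $0\to\C H^n\to\C H^{2n}\to\C H^n\to 0$. The formula therefore does not apply as you invoke it, and this is exactly why the paper instead cites \cite[Lemma 3.1]{dubois2015l2}, the analogue of Lemma~\ref{MatrixCalculationTorsion} for complexes of length three: one chooses a size-$n$ subset $L$ of the $2n$ middle coordinates, takes the resulting square block of $\partial_1^\rho$ as one factor and the complementary square block of $\partial_2^\rho$ as the other, and the torsion is their ratio. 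Choosing $L$ so that both blocks are $\pm(\gamma(e_1)\rho(e_1)-I)$ (possibly times a unit monomial in $H$, which has determinant one) makes the two determinants cancel outright --- no ``telescoping'' or three-way case analysis is needed. Your instinct that the surviving blocks coincide is right; the missing piece is using the correct short-complex lemma so that there are exactly two factors to compare.
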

\begin{proof}
We consider the standard CW structure for $T$ constructed by identifying pairs of sides of a square. Let $P$ be the 0-cell. Let $E_1,E_2$ be the 1-cells. Let \[e_1=[E_1]\in\pi_1(T),\quad e_2=[E_2]\in \pi_1(T),\]
then $\pi_1(T)$ is the free Abelian group generated by $e_1,e_2$. There is a 2-cell $\sigma$ whose boundary is the loop $E_1E_2E_1^{-1}E^{-1}_2$. Let $\widehat T$ be the universal covering of $T$ with the induced CW structure. It is easy to see that the $L^2$-chain complex of $T$ twisted by $(\pi_1(T),\rho;\gamma)$ is
\[C^{(2)}_*(T,\rho;\gamma)=(0\longrightarrow l^2(H)\langle\sigma\rangle\otimes_\C V\stackrel{\gamma(\partial_2^\rho)}\longrightarrow l^2(H)\langle E_1,E_2\rangle\otimes_\C V \stackrel{\gamma(\partial_1^\rho)}\longrightarrow l^2(H)\langle P\rangle\otimes_\C V \longrightarrow0)\]
in which
\[\gamma(\partial^\rho_2)=\begin{pmatrix}
I^{n\times n}-\gamma(e_2)\rho(e_2) & 
-I^{n\times n}+\gamma(e_1)\rho(e_1)
\end{pmatrix},\quad
\gamma(\partial_1^\rho)=
\begin{pmatrix}
\gamma(e_1)\rho(e_1)-I^{n\times n} \\ \gamma(e_2)\rho(e_2)-I^{n\times n}\\
\end{pmatrix}.
\]
We assume without loss of generality that $\gamma(e_1)$ has infinite order. Set $p(z):=\det(z\rho(e_1)-I^{n\times n})$ as a polynomial of indeterminant $z$. Then by Lemma \ref{MahlerMasureFormula} \[\rdet H(\gamma(e_1)\rho(e_1)-I^{n\times n})=\operatorname{Mah}(p(z))\not=0.\]
The conclusion follows from \cite[Lemma 3.1]{dubois2015l2} which is a formula analogous to Lemma \ref{MatrixCalculationTorsion} but applies to shorter chain complexes.
\end{proof}

There is another way to define the twisted $L^2$-torsions, following L\"uck \cite{luck2018twisting}. Let $H$ be a finitely generated group. Recall that $\widetilde X$ is called a \emph{finite free $H$-CW complex} if $\widetilde X$ is a regular covering space of a finite CW complex $X$, with deck transformation group $H$ acting on $\widetilde X$ on the left. Choose an $H$-equivariant CW structure for $\widetilde X$, and choose one representative cell for each $H$-orbit, then the cellular chain complex $C_*(\widetilde X)$ becomes a based $\C H$-chain complex. For any group homomorphism $\phi:H\ra \slC n$, we form the diagonal twisting chain complex $\mathcal D(\phi)C_*(\widetilde X)$ (recall the definition of the twisting functor $\mathcal D$ in section \ref{section 2}). The \emph{$\phi$-twisted $L^2$-torsion} of the $H$-CW complex $\widetilde X$ is defined to be
\[\rho_H^{(2)}(\widetilde X,\phi):=\log\tautwo(l^2(H)\otimes_{\C H}\mathcal D(\phi)C_*(\widetilde X)).\]
Note that $\rho$ is a unimodular representation in our setting, this torsion does not depend on a specific $\C H$-basis for $C_*(\widetilde X)$ (compare Proposition \ref{InvarianceOfBasis}). We point out in the following proposition that both definitions of twisted $L^2$-torsion are essentially the same.

\begin{prop}
Following the notations above. Let $G$ be the fundamental group of $X=H\backslash\widetilde X$, there is a natural quotient map $\gamma:G\ra H$ by covering space theory. It is obvious that $(G,\phi\circ \gamma;\gamma)$ is an admissible triple. Then we have
\[\tautwo(X,\phi\circ\gamma;\gamma)=\exp\rho^{(2)}_H(\widetilde X,\phi).\]
\end{prop}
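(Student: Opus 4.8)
The plan is to unwind both definitions and identify the two chain complexes of Hilbert $\mathcal N(H)$-modules term by term. First I would fix an $H$-equivariant CW structure on $\widetilde X$ together with a choice of one representative cell $\widetilde\sigma_i$ per $H$-orbit; this gives the based $\C H$-chain complex $C_*(\widetilde X)$. Pulling the CW structure of $X$ up to its universal cover $\widehat X$ and choosing liftings $\widehat\sigma_i$ of the cells of $X$ gives the based $\C G$-chain complex $C_*(\widehat X)$ used to define $\tautwo(X,\phi\circ\gamma;\gamma)$. The key observation is that $\widetilde X$ is an intermediate covering, so $\widehat X$ is also the universal cover of $\widetilde X$, and there is a $\C$-linear chain isomorphism $C_*(\widetilde X)\cong \C H\otimes_{\C G}C_*(\widehat X)$ (where $\C H$ is a right $\C G$-module via $\gamma$) that carries the chosen $\C H$-basis to $1\otimes\widehat\sigma_i$, up to reindexing and the usual $\pm g$ ambiguities already handled by Proposition \ref{InvarianceOfBasis}.

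Next I would commute the twisting functor past this base change. Since $\phi\circ\gamma = \psi\circ\gamma$ with $\psi=\phi$ in the admissibility diagram, applying $\mathcal D(\phi\circ\gamma)$ to $C_*(\widehat X)$ and then $l^2(H)\otimes_{\C G}(-)$ should agree, as a Hilbert $\mathcal N(H)$-chain complex, with applying $\mathcal D(\phi)$ to $C_*(\widetilde X)$ and then $l^2(H)\otimes_{\C H}(-)$. Concretely, on matrices this is the statement that the entry-level recipe of Proposition \ref{MatrixRepPartialTwist} — replace $\Lambda_{i,j}$ by $\Lambda_{i,j}\cdot\rho(\Lambda_{i,j})$ — is compatible with pushing $\C G$-matrices to $\C H$-matrices via $\gamma$: applying $\gamma$ to each monomial of $\partial_i$ and then twisting by $\phi$ yields the same matrix as twisting by $\phi\circ\gamma$ over $\C G$ and then applying $\gamma$. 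This is a direct monomial-by-monomial check using $\phi(\gamma(g)) = (\phi\circ\gamma)(g)$. After the identification $l^2(H)\otimes_{\C G}\C G^m \cong l^2(H)\otimes_{\C H}\C H^{m'}$ of the underlying Hilbert $\mathcal N(H)$-modules, the two boundary operators coincide.

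Once the two Hilbert $\mathcal N(H)$-chain complexes are identified, both sides are either simultaneously weakly $L^2$-acyclic of determinant class or not, so the $L^2$-torsions agree; taking logarithms and using $\rho_H^{(2)}(\widetilde X,\phi)=\log\tautwo(l^2(H)\otimes_{\C H}\mathcal D(\phi)C_*(\widetilde X))$ gives $\tautwo(X,\phi\circ\gamma;\gamma)=\exp\rho_H^{(2)}(\widetilde X,\phi)$ as claimed. The bookkeeping with bases — matching up the $\C H$-cells of $\widetilde X$ with the $G$-orbits of lifts $\widehat\sigma_i$ and tracking the induced ordered bases through the tensor products and the twisting functor — is the only place where care is needed, but by Proposition \ref{InvarianceOfBasis} any two admissible choices yield the same torsion, so the main obstacle is purely notational rather than substantive: one must set up the chain isomorphism $C_*(\widetilde X)\cong\C H\otimes_{\C G}C_*(\widehat X)$ and verify it is $\C H$-linear and respects (up to the allowed ambiguities) the preferred bases on both sides.
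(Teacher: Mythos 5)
Your proposal is correct and takes essentially the same route as the paper: both proofs reduce to the $\C H$-chain isomorphism $\mathcal D(\phi)C_*(\widetilde X)\cong\C H\otimes_{\C G}\mathcal D(\phi\circ\gamma)C_*(\widehat X)$, verified by the observation that diagonal twisting commutes with the base change along $\gamma$ (a monomial-level identity resting on the admissibility condition $\phi\circ\gamma$), after which one tensors with $l^2(H)$ and compares $L^2$-torsions, with base ambiguities absorbed by Proposition~\ref{InvarianceOfBasis}.
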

\begin{proof}
Let $\widehat X$ be the universal covering space of $X$, with the natural CW structure coming from $X$. Choose a lifting for each cell in $X$ and then $C_*(\widehat X)$ becomes a based $\C G$-chain complex. It is a pure algebraic fact that the two based $\C H$-chain complexes are $\C H$-isomorphic:
\begin{equation}\tag{*}\label{AlgebraicFact} \mathcal D(\phi)C_*(\widetilde X)\cong \C H\otimes_{\C G} \mathcal D(\phi\circ\gamma)C_*(\widehat X).\end{equation}
Indeed, the $\C H$-chain complex $\C H\otimes_{\C G} \mathcal D(\phi\circ\gamma)C_*(\widehat X)$ is obtained from \[C_*(\widehat X)=(\cdots\longrightarrow \C G ^{r_{i+1}} \stackrel{\partial_{i+1}}\longrightarrow \C G ^{r_{i}} \stackrel{\partial_i}\longrightarrow \C G ^{r_{i-1}}\longrightarrow\cdots)\] by the following two operations:

(1) (the diagonal twist) firstly, replace every direct summand $\C G$ by its $n^{\operatorname{th}}$ power $\C G^n$, replace any entry $\Lambda_{i,j}$ of the matrix $\partial_*$ by a block matrix $\Lambda_{i,j}\phi\circ\gamma(\Lambda_{i,j})$, as in Proposition \ref{MatrixRepPartialTwist}, resulting in a new matrix $\partial_*^{\phi\circ \gamma}$ and then

(2) (tensoring with $\C H$) replace every direct summand $\C G$  of the chain module by $\C H$, and apply $\gamma$ to every entry of  $\partial_*^{\phi\circ \gamma}$, resulting in a block matrix whose $i,j$-submatrix is $\gamma(\Lambda_{i,j})\phi\circ\gamma(\Lambda_{i,j})$.

The resulting chain complex is exactly the chain complex $\mathcal D(\phi)(\C H\otimes_{\C G} C_*(\widehat X))$ (this can be seen by doing the above operations in the reversed order, thanks to the admissible condition). Combining the well-known $\C H$-isomorphism $C_*(\widetilde X)\cong\C H\otimes_{\C G} C_*(\widehat X)$ and then the isomorphism \eqref{AlgebraicFact} follows.

Finally, we tensor $l^2(H)$ on the left of both $\C H$-chain complexes and the conclusion follows from both taking $L^2$-torsion.
\end{proof}

The following useful properties are obtained by translating the statements of \cite[Theorem 6.7]{luck2018twisting} into our terminology.

\begin{lem}\label{BasicProppertyTorsion}
Some basic properties of twisted-$L^2$ torsions:

(1) $G$-homotopy equivalence.

Let $X,Y$ be two finite CW complexes with fundamental group $G$. For any admissible triple $(G,\rho;\gamma)$, suppose there is a simple homotopy equivalence $f:X\ra Y$ such that the induced homomorphism $f_*:G\ra G$ preserves $\ker \gamma$. Then we have
\[\tautwo(X,\rho;\gamma)=\tautwo(Y,\rho;\gamma).\]

(2) Restriction.

Let $X$ be a finite CW complex with fundamental group $G$. Let $\widetilde X$ be a finite regular cover of $X$ with the induced CW structure. Suppose $\pi_1(\widetilde X)=\widetilde G\lhd G$ is a normal subgroup of index $d$. Let $\widetilde \rho:\widetilde G\ra \slC n$ be the restriction of $\rho: G\ra \slC n$. Then
\[\tautwo(\widetilde X,\widetilde \rho)=\tautwo(X,\rho)^d.\]

(3) Sum formula.

Let $X$ be a finite CW complex with fundamental group $G$ and $\rho:G\ra \slC n$ be a homomorphism. Let 
\[i_1:X_1\hookrightarrow X, \ i_2:X_2\hookrightarrow X, \ i_0:X_1\cap X_2\hookrightarrow X\] be subcomplex of $X$ 
with $X_1\cup X_2=X$. Let \[\rho_1=\rho|_{\pi_1(X_1)},\ \rho_2=\rho|_{\pi_1(X_2)},\ \rho_0=\rho|_{\pi_1(X_1\cap X_2)}\]
be the restriction of $\rho$.
If $\tautwo(X_1\cap X_2,\rho_0;i_{0*})\not=0$, then
\[\tautwo(X,\rho)=\tautwo(X_1,\rho_1;i_{1*})\cdot\tautwo(X_2,\rho_2;i_{2*})/\tautwo(X_1\cap X_2,\rho_0;i_{0*}).\]
\end{lem}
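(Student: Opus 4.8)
Once the definitions are unwound, all three statements are reformulations of \cite[Theorem 6.7]{luck2018twisting}, so the plan is to set up the translation between our admissible-triple formalism and L\"uck's finite free $H$-CW formalism and then quote that theorem, keeping track of the fact that our $\tautwo$ is the multiplicative inverse of the exponential of L\"uck's torsion. The bridge is the preceding proposition, which for a surjection $\gamma\colon G\twoheadrightarrow H$ identifies $\tautwo(X,\phi\circ\gamma;\gamma)$ with $\exp\rho^{(2)}_H(\widetilde X,\phi)$ for the regular cover $\widetilde X\to X$ with deck group $H$. The first step is therefore to reduce an arbitrary admissible triple $(G,\rho;\gamma)$ to one with surjective $\gamma$: writing $\bar H=\operatorname{im}\gamma$ and $\bar\gamma\colon G\twoheadrightarrow\bar H$, the right $\C G$-action on $l^2(H)$ factors through $\bar\gamma$, so $C^{(2)}_*(X,\rho;\gamma)$ is obtained from $C^{(2)}_*(X,\rho;\bar\gamma)$ by the induction functor $l^2(H)\otimes_{\C\bar H}-$; since induction along an injective group homomorphism preserves $l^2$-Betti numbers, the determinant-class condition, and Fuglede--Kadison determinants (\cite{luck2002l2}), this gives $\tautwo(X,\rho;\gamma)=\tautwo(X,\rho;\bar\gamma)$. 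Hence I may assume throughout that $\gamma$ is the deck map of a regular cover, so that the preceding proposition applies verbatim.

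\textbf{Parts (1) and (2).} For (1), I would let $\widetilde X\to X$ and $\widetilde Y\to Y$ be the regular covers with deck group $H$ classified by $\gamma$; since $f_*$ preserves $\ker\gamma$, the equivalence $f$ lifts to an $H$-equivariant simple homotopy equivalence $\widetilde f\colon\widetilde X\to\widetilde Y$ (here the ``simple'' hypothesis is what makes the $H$-Whitehead-torsion correction term vanish). Through the dictionary above the asserted equality is precisely the $H$-homotopy invariance of the twisted $L^2$-torsion recorded in \cite[Theorem 6.7]{luck2018twisting}. For (2), the universal cover of $\widetilde X$ is $\widehat X$ itself, and because $\widetilde\rho=\rho|_{\widetilde G}$, the based $\C\widetilde G$-chain complex $\mathcal D(\widetilde\rho)C_*(\widehat X)$ is just $\mathcal D(\rho)C_*(\widehat X)$ viewed over the index-$d$ subring $\C\widetilde G\subset\C G$ with the induced basis (Proposition~\ref{MatrixRepPartialTwist} shows the block matrices match). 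The identity $\tautwo(\widetilde X,\widetilde\rho)=\tautwo(X,\rho)^{d}$ is then the behaviour of $L^2$-torsion under restriction to a finite-index subgroup, again part of \cite[Theorem 6.7]{luck2018twisting}.

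\textbf{Part (3).} I would fix a CW structure on $X$ making $X_1,X_2$ and $X_0=X_1\cap X_2$ subcomplexes and lift it to $\widehat X$. Letting $\widehat{X_k}\subseteq\widehat X$ be the preimage of $X_k$, the cellular chains form a short exact sequence of based $\C G$-chain complexes
\[0\longrightarrow C_*(\widehat{X_0})\longrightarrow C_*(\widehat{X_1})\oplus C_*(\widehat{X_2})\longrightarrow C_*(\widehat X)\longrightarrow0.\]
Since $\rho|_{\pi_1(X_k)}=\rho_k$, the complex $C_*(\widehat{X_k})$ is induced along $i_{k*}$ from the cellular chains of the universal cover of $X_k$ and $\mathcal D(\rho)$ commutes with this induction (Proposition~\ref{MatrixRepPartialTwist}), so $l^2(G)\otimes_{\C G}\mathcal D(\rho)C_*(\widehat{X_k})$ is canonically $C^{(2)}_*(X_k,\rho_k;i_{k*})$; applying $l^2(G)\otimes_{\C G}\mathcal D(\rho)(-)$ to the sequence above therefore yields a short exact sequence of Hilbert $\mathcal N(G)$-chain complexes
\[0\longrightarrow C^{(2)}_*(X_0,\rho_0;i_{0*})\longrightarrow C^{(2)}_*(X_1,\rho_1;i_{1*})\oplus C^{(2)}_*(X_2,\rho_2;i_{2*})\longrightarrow C^{(2)}_*(X,\rho)\longrightarrow0.\]
The hypothesis $\tautwo(X_0,\rho_0;i_{0*})\neq0$ says the sub-complex is weakly acyclic and of determinant class; the additivity of $L^2$-torsion along short exact sequences (\cite[Theorem 6.7]{luck2018twisting}, resting on additivity of Fuglede--Kadison determinants, \cite[Theorem 3.14]{luck2002l2}) then transmits weak acyclicity and the determinant-class property to the other two terms and gives the multiplicative relation. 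In the degenerate case the convention $\tautwo=0$ for complexes that are not weakly acyclic or not of determinant class makes both sides vanish simultaneously.

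\textbf{Expected main obstacle.} There is no conceptual difficulty, since the analytic content lives in \cite{luck2018twisting}; the actual work — and the place most likely to hide a slip — is the careful matching of formalisms: the reduction to surjective $\gamma$ via induction-invariance of the Fuglede--Kadison determinant, and the verification that $\mathcal D(\rho)$ commutes with the inductions along the $i_{k*}$ (which uses $\rho_k=\rho|_{\pi_1(X_k)}$ together with Proposition~\ref{MatrixRepPartialTwist}), so that the Mayer--Vietoris sequence genuinely survives twisting and $l^2$-completion. After that, only bookkeeping with conventions remains, in particular translating between L\"uck's additive, inverted normalisation of $L^2$-torsion and the multiplicative one used here.
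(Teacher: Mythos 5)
Your proposal takes essentially the same route as the paper, which gives no detailed argument but simply records that all three properties are obtained by translating \cite[Theorem 6.7]{luck2018twisting} into the admissible-triple language; your write-up supplies exactly the translation (reduction to surjective $\gamma$ via induction-invariance of Fuglede--Kadison determinants, lifting to the deck-group $H$-CW setting, and the Mayer--Vietoris short exact sequence for part (3)) that the paper leaves implicit, and the bookkeeping with the inverted/exponentiated normalisation is handled correctly.
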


\section{Twisted $L^2$-torsion for 3-manifolds}\label{section 4}
In the remaining of this paper, we will assume that $N$ is a compact orientable irreducible 3-manifold with empty or incompressible toral boundary. We denote by $G$ the fundamental group of $N$ and assume $G$ is infinite. It is well known that $G$ is finitely generated and residually finite (see  \cite{hempel1987residual}). For any group homomorphism $\rho:G\ra \slC n$ and $\gamma:G\ra H$, we say $(N,\rho;\gamma)$ is an admissible triple if $(G,\rho;\gamma)$ is. In this case, we define the \emph{twisted $L^2$-torsion of $(N,\rho;\gamma)$} by
\[\tautwo(N,\rho;\gamma):=\tautwo(X,\rho;\gamma)\]
where $X$ is any CW structure for $N$. This definition does not depend on the choice of $X$, thanks to Lemma \ref{BasicProppertyTorsion}. Indeed, if $X, Y$ are two CW structures for $N$, denote by $f:X\ra Y$ the corresponding homeomorphism, then $f$ is a simple homotopy equivalence by Chapman \cite[Theorem 1]{chapman1974topological} and certainly preserves $\ker \gamma$. So we have $\tautwo(X,\rho;\gamma)=\tautwo(Y,\rho;\gamma)$.

The remaining part of this section is devoted to the proof of Theorem \ref{TheoremA}.

\subsection{Twisted $L^2$-torsion for graph manifolds}
We prove Theorem \ref{TheoremA} for graph manifold $N$ with infinite fundamental group $G$.
\begin{thm}\label{TorsionForGraph}
Suppose $M$ is a Seifert-fibered piece of the graph manifold $N$. Let $h\in \pi_1(M)$ be represented by the regular fiber of $M$. Let $\Lambda$ be the product of all eigenvalues of $\rho(h)$ whose modulus is not greater than 1. Suppose the orbit space $M/S^1$ has orbifold Euler characteristic $\chi_{\operatorname{orb}}$. Then
\[\tautwo(N,\rho)=\prod_{M\subset  N \text{ is a  Seifert piece}}
\Lambda^{\chi_{\operatorname{orb}}}\]
\end{thm}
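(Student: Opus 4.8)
The plan is to build a CW structure for the graph manifold $N$ that is adapted to its JSJ decomposition, use the Sum formula (Lemma \ref{BasicProppertyTorsion}(3)) to reduce the computation of $\tautwo(N,\rho)$ to the contributions of the individual Seifert pieces and the JSJ tori, and then compute the Seifert piece contribution directly. First I would fix a graph-of-spaces decomposition of $N$ into Seifert-fibered pieces $M$ glued along a collection of JSJ tori. Each JSJ torus $T$ carries an admissible triple whose target group $H$ (coming from the restriction of $\gamma=\mathrm{id}$, i.e. $H=G$) contains the image of $\pi_1(T)$; since $G$ is infinite and $T$ is incompressible, this image is infinite, so Lemma \ref{TorusTorsionFormula} gives $\tautwo(T,\rho;i_{T*})=1$. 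Hence the tori drop out of the Sum formula entirely (their denominators are $1$ and nonzero, so the hypothesis of Lemma \ref{BasicProppertyTorsion}(3) is satisfied), and we are left with
\[\tautwo(N,\rho)=\prod_{M\subset N\ \mathrm{Seifert}}\tautwo(M,\rho|_{\pi_1(M)};i_{M*}).\]
The inductive application of the Sum formula over the JSJ graph is routine bookkeeping; care is needed that at each stage the relevant intersection torsion is nonzero, which is exactly what Lemma \ref{TorusTorsionFormula} guarantees.

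The heart of the argument is then the computation of a single Seifert piece contribution: I claim $\tautwo(M,\rho|_{\pi_1(M)};i_{M*})=\Lambda^{\chi_{\mathrm{orb}}}$, where $\Lambda$ is the product of the eigenvalues of $\rho(h)$ of modulus $\le 1$ and $\chi_{\mathrm{orb}}$ is the orbifold Euler characteristic of the base $M/S^1$. For this I would pass to a finite cover. Using residual finiteness (and the fact that Seifert pieces in graph manifolds are finitely covered by products $\Sigma\times S^1$ or bundles with horizontal boundary), one finds a finite regular cover $\widetilde M$ that is an honest circle bundle $\Sigma\times S^1$ over a surface $\Sigma$ with boundary, of degree $d$; by the Restriction formula (Lemma \ref{BasicProppertyTorsion}(2)) it suffices to compute $\tautwo(\widetilde M,\widetilde\rho)$ and then extract a $d$-th root, noting $\chi_{\mathrm{orb}}(M/S^1)=\chi(\Sigma)/d$ and that $h$ lifts to the $S^1$-fiber (up to the power recorded by the covering, which is absorbed correctly because $\Lambda$ scales multiplicatively under taking powers and roots in the way matching $\chi_{\mathrm{orb}}$). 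On $\Sigma\times S^1$ I would take the product CW structure: a CW structure on $\Sigma$ (a wedge of circles up to homotopy when $\partial\Sigma\neq\emptyset$, or standard otherwise) crossed with the two-cell structure on $S^1$. Then $C_*(\widetilde M,\widetilde\rho)$ splits as a tensor product of the twisted chain complex of $\Sigma$ with that of $S^1$; the $S^1$-direction contributes, after applying $\gamma$ and twisting, matrices of the form $I - \gamma(h)\rho(h)$, whose regular Fuglede-Kadison determinant I would compute via Lemma \ref{MahlerMasureFormula}: it equals the Mahler measure of $\det(zI-\rho(h))$, which by Jensen's formula is precisely $\prod_{|\lambda|>1}|\lambda|=\prod_{|\lambda|\le 1}|\lambda|^{-1}$... here one must be careful with conventions, but the upshot is the factor $\Lambda$ (or $\Lambda^{-1}$, fixed by the torsion sign convention). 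Assembling the alternating product of these determinants over the cells of $\Sigma$, everything except an overall exponent cancels, and that exponent is $\pm\chi(\Sigma)$; tracking the sign against the convention in the excerpt (where the torsion is the inverse of Lück's) yields $\Lambda^{\chi(\Sigma)}$, hence $\Lambda^{\chi_{\mathrm{orb}}}$ after the Restriction formula.

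A few technical points need attention. When $\partial M\neq\emptyset$ the base orbifold has boundary and $\Sigma$ is homotopy equivalent to a graph, so the twisted chain complex of $\widetilde M$ is genuinely acyclic over $\mathcal N(H)$ only because $\gamma(h)$ has infinite order — this is where $G$ infinite and the incompressibility of $\partial N$ enter again, ensuring the regular square determinant $\rdet{(H)}(I-\gamma(h)\rho(h))\neq 0$. When $\partial M=\emptyset$ but $M\neq N$ (so $N$ is closed), the same holds because the fiber still has infinite order in $G$. The case where $M=N$ is a closed Seifert manifold with finite $\pi_1$ is excluded by hypothesis; if $M=N$ is closed Seifert with infinite $\pi_1$, the orbifold base is good and the product-cover argument still applies, with $\chi_{\mathrm{orb}}\le 0$. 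The main obstacle I anticipate is the careful verification that the twisted $L^2$-chain complex of the Seifert piece is weakly acyclic and of determinant class over the correct von Neumann algebra $\mathcal N(H)$ — once that is in place, the determinant computations are Mahler-measure calculations via Lemma \ref{MahlerMasureFormula}, and the combinatorial cancellation producing the exponent $\chi_{\mathrm{orb}}$ is standard. Matching the sign/inversion conventions so that the answer is $\Lambda^{\chi_{\mathrm{orb}}}$ rather than $\Lambda^{-\chi_{\mathrm{orb}}}$ is a bookkeeping subtlety but not a genuine difficulty.
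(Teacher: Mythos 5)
Your overall strategy — cut along the JSJ tori, kill the torus contributions via Lemma \ref{TorusTorsionFormula}, then compute each Seifert piece — matches the opening of the paper's proof. However, the paper handles the Seifert piece in a genuinely different way than you propose, and your route has a real gap.

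You propose to compute $\tautwo(M,\rho|_{\pi_1(M)};i_{M*})$ by passing to a finite regular cover $\widetilde M \cong \Sigma\times S^1$ of $M$ and invoking the Restriction formula (Lemma \ref{BasicProppertyTorsion}(2)). But that lemma, as stated and as it must be, concerns the $L^2$-torsion $\tautwo(X,\rho)$ with $\gamma=\operatorname{id}_{\pi_1(X)}$, so that the relevant von Neumann algebra is $\mathcal N(\pi_1(X))$. The quantity you need to compute is $\tautwo(M,\rho|_{\pi_1(M)};i_{M*})$, whose $L^2$-chain complex lives over $\mathcal N(G)$ with $G=\pi_1(N)$, because $M$ enters the Sum formula as a subcomplex of $N$ equipped with the admissible triple $(\pi_1(M),\rho|_{\pi_1(M)};i_{M*})$. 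A regular cover $\widetilde M\to M$ is not a subspace of $N$, has no inherited admissible triple mapping to $G$, and the Restriction formula does not convert $\tautwo(M,\rho|_{\pi_1(M)};i_{M*})$ into a torsion of $\widetilde M$. To salvage this one would need either an extension of the Restriction formula to admissible triples (with a compatibility hypothesis relating $\pi_1(\widetilde M)$ to a finite-index subgroup of $G$ that restricts correctly over $M$), or to pass to a regular finite cover of $N$ itself chosen so that all Seifert pieces of the cover are products — both of which require work you have not supplied, and the latter also requires re-identifying the JSJ pieces in the cover and tracking the degrees $d_i$ over each $M$.

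The paper sidesteps this entirely. Instead of covering $M$, it uses the Seifert invariants to write $M$ as an honest union of subspaces of $N$: a product $\Sigma\times S^1$ (with $\Sigma$ a surface with extra boundary circles) and solid tori $D_1,\dots,D_k$, glued along tori $T_i$ according to the Dehn filling slopes $q_i/p_i$. All of these inherit admissible triples via inclusion into $N$, so the Sum formula applies cleanly. The product piece gives $\Lambda^{-(2g+b+k-2)}$, each solid torus gives $\Lambda^{1/p_i}$ (because its core has image $h^{m_i}f_i^{n_i}$ with eigenvalue moduli $\lambda_r^{1/p_i}$), and the orbifold Euler characteristic $\chi_{\operatorname{orb}}=2-2g-b-\sum(1-1/p_i)$ falls out exactly. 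This is the decomposition-level analogue of your cover, but it stays inside $N$ and never needs to change the von Neumann algebra or take roots of torsions, so the bookkeeping you flagged as a ``subtlety'' simply does not arise. Your Mahler-measure computation for the $I-h\rho(h)$-type matrices is correct and is exactly what the paper does via Lemma \ref{MahlerMasureFormula}, but you are missing the $\sum 1/p_i$ correction from the exceptional fibers, which in your approach is hidden inside the degree of the cover and in the paper comes transparently from the solid-torus contributions.
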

\begin{proof}
This proof is a generalization of \cite[Proposition 4.3]{benard2022twisted}. Fix any Seifert-fibered piece $M$ of the JSJ-decomposition of $N$, then $\pi_1(M)$ is infinite as well. Suppose that $M$ is isomorphic to a model
\[M(g,b;q_1/p_1,\cdots,q_k/p_k),\quad k\geqslant1,\ p_1\cdots,p_k>0\]
following Hatcher \cite{hatcher2007notes}, more explicitly, take a surface of genus $g$ with $b$ boundary components, namely $E_1,\cdots,E_b$, then drill out $k$-disjoint disks from it to form a new surface $\Sigma$ with $k$ additional boundary circles $F_1,\cdots,F_k$. These $k$ boundary circles correspond to $k$ boundary tori of $\Sigma\times S^1$, namely $T_1,\cdots,T_k$, then $M$ is obtained by a Dehn filling of slope $(q_1/p_1,\cdots,q_k/p_k)$ along $(T_1,\cdots,T_k)$ respectively. So we have
\[M=(\Sigma\times S^1)\cup_{T_1}D_1\cup_{T_2}\cdots\cup_{T_k}D_k\]
in which $D_i$ is a solid torus whose meridian $(0,1)$-curve is attached to the $(q_i,p_i)$-curve of $T_i$. The orbit space can be viewed as a 2-dimensional orbifold, whose underlying topological space is a surface $\Sigma_{g,b}$ with $k$ singularities of indices $p_1,\cdots,p_k$ respectively. The orbifold Euler characteristic is 
\[\chi_{\operatorname{orb}}=2-2g-b-\sum_{i=1}^k(1-\frac1{p_i}).\]
More details can be found in \cite{scott1983geometries}.

Retract $\Sigma$ along the boundary circle $F_k$ to an $1$-dimensional complex $X$, it is a bunch of circles with one common vertex $P$, and edges 
\[A_1,B_1,\cdots,A_g,B_g,E_1,\cdots,E_b,F_1,\cdots,F_{k-1}\]
where $A_1,B_1,\cdots,A_g,B_g$ come from the standard polygon representation of a closed surface $\Sigma_g$.
Suppose that $A_i,B_i,E_i,F_i$ represents $a_i,b_i,e_i,f_i$ in $\pi_1(M)$ respectively. Let $H$ be the 1-cell of $S^1$ representing $h\in\pi_1(M)$, then $\Sigma\times S^1$ is given the product CW structure, we collect the cells in each dimension in the following:
\[\{A_1\times H,B_1\times H,\cdots,A_g\times H,B_g\times H,E_1\times H,\cdots,E_b\times H,F_1\times H,\cdots,F_{k-1}\times H\},\]
\[\{A_1,B_1,\cdots,A_g,B_g,E_1,\cdots,E_b,F_1,\cdots,F_{k-1},H\},\quad \{P\}.\]
We have $f_i^{p_i}h^{q_i}=1$ for $i=1,\cdots,k-1$ by the Dehn filling.

Denote by \[\kappa:\Sigma\times S^1\hookrightarrow N ,\ \iota_i:T_i\hookrightarrow N,\ \zeta_i:D_i\hookrightarrow N,\quad i=1,\cdots,k\] the inclusion maps to the ambient manifold $N$.
Our strategy is as follows: cut $N$ along all JSJ-tori and all tori $\{T_1,\cdots,T_k\}$ that appears in each Seifert piece of the JSJ-decomposition of $N$ as above. By Lemma \ref{TorusTorsionFormula}, the JSJ-tori do not contribute to the $L^2$-torsion. Then by the sum formula of Lemma \ref{BasicProppertyTorsion}, we have the following formula:
\begin{equation}\label{GiantEquation}
 \tautwo(N,\rho)=
 \prod_{M\subset  N \text{ is a  Seifert piece}}\frac{\tautwo(\Sigma\times S^1,\rho\circ\kappa_*;\kappa_*)\prod_{i=1}^k\tautwo(D_i,\rho\circ \zeta_{i*};\zeta_{i*})}{\prod_{i=1}^k\tautwo(T_i,\rho\circ \iota_{i*};\iota_{i*})}
\end{equation}
It remains to calculate the terms appearing in Theorem \ref{GiantEquation}.

Firstly the easiest part. Since $\iota_{i*}(\pi_1(T_i))$ has infinite order in $G$ then the twisted $L^2$-torsion of the admissible triple $(T_i,\rho\circ \iota_{i*};\iota_{i*})$ is trivially $1$ by Lemma \ref{TorusTorsionFormula}.

We now compute $\tautwo(\Sigma\times S^1,\rho\circ\kappa_*;\kappa_*)$. Set $\pi:=\pi_1(\Sigma\times S^1)$, the CW chain complex of the universal cover $\widehat{\Sigma\times S^1}$ is 
\[C_*(\widehat{\Sigma\times S^1})=(0\longrightarrow \C \pi^{2g+b+k-1}\stackrel{\partial_2}\longrightarrow \C\pi^{2g+b+k} \stackrel{\partial_1}\longrightarrow \C \pi \stackrel{\partial_0} \longrightarrow 0 )\]
in which
\[\partial_2=\begin{pmatrix}
1-h & 0 & \cdots & 0 & *\\
0 & 1-h &  & \vdots & \vdots\\
\vdots & &\ddots & 0 & *\\
0 &\cdots & 0 & 1-h & *
\end{pmatrix},\quad \partial_1=\begin{pmatrix}
*\\ \vdots \\ * \\ 1-h
\end{pmatrix}.
\]
Then the $L^2$-chain complex of $\Sigma\times S^1$ twisted by $(\pi,\rho\circ\kappa_*;\kappa_*)$ is
\[C_*^{(2)}(\Sigma\times S^1,\rho\circ\kappa_*;\kappa_*)=(0\longrightarrow l^2(G)^{2g+b+k-1}\stackrel{\partial_2^\rho}\longrightarrow l^2(G)^{2g+b+k} \stackrel{\partial_1^\rho}\longrightarrow l^2(G) \stackrel{\partial_0} \longrightarrow 0 )\]
in which

\[\partial_2^\rho=\begin{pmatrix}
I^{n\times n}-h\rho(h) & 0 & \cdots & 0 & *\\
0 & I^{n\times n}-h\rho(h) &  & \vdots & \vdots\\
\vdots & &\ddots & 0 & *\\
0 &\cdots & 0 & I^{n\times n}-h\rho(h) & *
\end{pmatrix},\quad \partial_1^\rho=\begin{pmatrix}
*\\ \vdots \\ * \\I^{n\times n}-h\rho(h)
\end{pmatrix}.
\]
We have identified  $h$ with its image under $\kappa_*$ in $\pi_1(N)=G$ for notational convenience. If the modulus of all eigenvalues of $\rho(h)$ are $\lambda_1,\cdots,\lambda_n$, by properties of regular Fuglede-Kadison determinant and Lemma \ref{MahlerMasureFormula}, \ref{MatrixCalculationTorsion}, we know that
\[\begin{aligned}
 \tautwo(\Sigma\times S^1,\rho\circ\kappa_*;\kappa_*)&=\rdet G(I^{n\times n}-h\rho(h))^{2g+b+k-2}\\
 &=\operatorname{Mah}(\prod_{r=1}^n(1-z\lambda_r))^{2g+b+k-2}\\
 &=\Lambda^{-(2g+b+k-2)}.
\end{aligned}\]

Then we compute $\tautwo(D_i,\rho\circ \zeta_{i*};\zeta_{i*})$. It is easy to see that the generator of $\pi_1(D_i)$ is represented by $h^{m_i}f_i^{n_i}$, where $(m_i,n_i)$ is a pair of integers such that $m_ip_i-n_iq_i=1$. Then we have
\[\tautwo(D_i,\rho\circ \zeta_{i*};\zeta_{i*})=\rdet G(I^{n\times n}-h^{m_i}f_i^{n_i}\cdot\rho(h^{m_i}f_i^{n_i}))^{-1}\]
where $h,f_i$ are again viewed as elements in $G$. Since $h$ and $f_i$ commute and are simultaneously upper triangularisable,  then the modulus of all eigenvalues of $\rho(h^{m_i}f_i^{n_i})$ are $\lambda_1^{1/p_i},\cdots,\lambda_n^{1/p_i}$. Note that $h^{m_i}f_i^{n_i}$ is an infinite order element, by Lemma \ref{MahlerMasureFormula} we have
\[\rdet G(I^{n\times n}-h^{m_i}f_i^{n_i}\cdot\rho(h^{m_i}f_i^{n_i}))=\operatorname{Mah}(\prod_{r=1}^n(1-z\lambda_r^{1/p_i}))=\Lambda^{-1/p_i},\]
and then $\tautwo(D_i,\rho\circ \zeta_{i*};\zeta_{i*})=\Lambda^{1/p_i}$.

Finally, combining the calculations above, we have
\[\begin{aligned}
 &\quad\frac{\tautwo(\Sigma\times S^1,\rho\circ\kappa_*;\kappa_*)\prod_{i=1}^k\tautwo(D_i,\rho\circ \zeta_{i*};\zeta_{i*})}{\prod_{i=1}^k\tautwo(T_i,\rho\circ \iota_{i*};\iota_{i*})}\\
 &=\Lambda^{-(2g+b+k-2)+\sum_{i=1}^k\frac1{p_i}}\\
 &=\Lambda^{2-2g-b-\sum_{i=1}^k(1-\frac1{p_i})}\\
 &=\Lambda^{\chi_{\operatorname{orb}}}.
\end{aligned}\]
And the conclusion follows from Theorem \ref{GiantEquation}.
\end{proof}

\subsection{Twisted $L^2$-torsion for hyperbolic or mixed manifolds}

In this part, we assume that $N$ is not a graph manifold, or equivalently, $N$ contains at least one hyperbolic piece in its geometrization decomposition. Then $N$ is either hyperbolic or so-called mixed. By Agol's RFRS criterion for virtual fibering \cite{agol2008criteria} and the virtual specialness of 3-manifolds having at least one hyperbolic piece \cite{agol2013virtual, przytycki2018mixed}, we can assume that $N$ has a regular finite cover that fibers over circle. 

For future convenience, we introduce the following notions.

\begin{defn}\label{AlexanderDefinition}
Let $G$ be a finitely generated, residually finite group. For any cohomology class $\psi\in H^1(G;\R)$, and any real number $t>0$, there is an 1-dimensional representation
\[\psi_t:G\ra \C^\times,\quad g\mapsto t^{\psi(g)}.\]
This representation can be used to twist $\C G$, determining a $\C G$-homomorphism: \[\kappa(\psi,t):\C G\ra \C G,\quad g\mapsto t^{\psi(g)}g,\ g\in G\]
and extend $\C$-linearly. The $\C G$-homomorphism $\kappa(\psi,t)$ is called the \emph{Alexander twist of $\C G$ associated to $(\psi,t)$}.
\end{defn}
\begin{defn}
A positive function $f:\R^+\ra \R^+$ is multiplicatively convex if the function \[F:\R\ra \R,\quad t\longmapsto\log f(e^t)\]
is a convex function. In particular, a multiplicatively convex function is continuous and everywhere positive.
\end{defn}
Our main technical tool is the following theorem due to Liu \cite[Theorem 5.1]{liu2017degree}.
\begin{thm}\label{LiuToolTheorem}
Let $G$ be a finitely generated, residually finite group. For any square matrix $A$ over $\C G$ and any 1-cohomology class $\psi\in H^1(G;\R)$, the function
\[t\longmapsto \rdet G (\kappa(\psi,t)A),\quad t>0\]
is either constantly zero or multiplicatively convex (and in particular every where positive).
\end{thm}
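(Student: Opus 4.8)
The plan is to exhibit $\rdet G(\kappa(\psi,t)A)$ as the restriction of a function of one complex variable and read off the dichotomy from elementary properties of subharmonic functions. View $A$ as a $p\times p$ matrix over $\C G\subset\mathcal N(G)$, regard $\psi\in H^1(G;\R)$ as a homomorphism $G\to\R$, and for $w\in\C$ let $A(w):=\kappa(\psi,e^{w})A$ be the matrix obtained from $A$ by replacing each entry $\sum_{g}a_{g}\,g$ with $\sum_{g}a_{g}\,e^{w\psi(g)}g$; this is an entire $M_p(\C G)$-valued function of $w$ restricting to $\kappa(\psi,e^{s})A$ on $w=s\in\R$. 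Put $u(w):=\log\rdet G(A(w))\in[-\infty,+\infty)$. I would prove two things: (i) $u$ depends only on $\operatorname{Re}(w)$, and (ii) $u$ is subharmonic on $\C$ unless it is identically $-\infty$. Granting these, $u=v(\operatorname{Re}w)$ for a convex $v\colon\R\to[-\infty,+\infty)$ (a subharmonic function invariant under all vertical translations has this form), and a convex function that is not identically $-\infty$ is real-valued; thus either $\rdet G(\kappa(\psi,t)A)\equiv 0$, or $s\mapsto\log\rdet G(\kappa(\psi,e^{s})A)=v(s)$ is a finite convex function, which is precisely multiplicative convexity.

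For (i), write $w=s+i\theta$, so $e^{w\psi(g)}=e^{s\psi(g)}\,\chi_\theta(g)$ with $\chi_\theta\colon G\to U(1)$, $g\mapsto e^{i\theta\psi(g)}$, a homomorphism (no integrality of $\psi$ is needed). The diagonal unitary $U_{\chi_\theta}$ on $\ell^{2}(G)$ sending $g$ to $\chi_\theta(g)g$ normalizes $\mathcal N(G)$, and the induced $\ast$-automorphism $\alpha_\theta=\operatorname{Ad}(U_{\chi_\theta})$ satisfies $\alpha_\theta(g)=\chi_\theta(g)g$ for $g\in G$; since $U_{\chi_\theta}$ fixes the vector $\delta_{e}$ (because $\chi_\theta(1)=1$), $\alpha_\theta$ preserves the canonical trace, hence the Fuglede--Kadison determinant, the full-rank property, and the determinant class condition, all of which are defined through the trace and the spectral density function. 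An entrywise check gives $A(w)=(\alpha_\theta\otimes\operatorname{id}_{M_p})\bigl(\kappa(\psi,e^{s})A\bigr)$, whence $u(w)=u(\operatorname{Re}w)$.

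For (ii), I would use the regularization $g_\epsilon(w):=\tau\bigl(\log(A(w)^{*}A(w)+\epsilon)\bigr)$ for $\epsilon>0$, where $\tau$ is the canonical normal faithful trace on $M_p(\mathcal N(G))$. Since $A(w)^{*}A(w)+\epsilon\ge\epsilon>0$ is bounded and invertible, $g_\epsilon$ is finite; moreover one always has $\log\rdet G(A)=\tfrac12\,\tau\bigl(\log(A^{*}A)\bigr)$, both sides being $-\infty$ exactly when $A$ fails to be full rank or of determinant class, and by normality of $\tau$ one has $g_\epsilon(w)\downarrow 2u(w)$ pointwise as $\epsilon\downarrow 0$. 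A pointwise decreasing limit of subharmonic functions on the connected domain $\C$ is subharmonic unless identically $-\infty$, so it is enough that each $g_\epsilon$ be subharmonic. I would check this by computing $\partial_{w}\partial_{\bar w}g_\epsilon$: since $A(w)$ is holomorphic one has $\partial_{w}A(w)^{*}=0=\partial_{\bar w}A(w)$, and with $B=A(w)$, $C=\partial_{w}A(w)$, $P=B^{*}B+\epsilon$, the standard integral representation of the derivative of the operator logarithm (which gives $\partial\,\tau(\log P)=\tau(P^{-1}\partial P)$) together with $\partial_{w}P^{-1}=-P^{-1}(\partial_{w}P)P^{-1}$ yields
\[
\partial_{w}\partial_{\bar w}\,g_\epsilon=\tau\bigl(CP^{-1}C^{*}\bigr)-\tau\bigl(BP^{-1}B^{*}\cdot CP^{-1}C^{*}\bigr)=\tau\bigl((I-BP^{-1}B^{*})\,CP^{-1}C^{*}\bigr).
\]
Here $CP^{-1}C^{*}\ge 0$, and the identity $BP^{-1}B^{*}=B(B^{*}B+\epsilon)^{-1}B^{*}=I-\epsilon(BB^{*}+\epsilon)^{-1}\le I$ gives $I-BP^{-1}B^{*}\ge 0$; as the trace of a product of two positive operators is non-negative, $\Delta g_\epsilon=4\,\partial_{w}\partial_{\bar w}g_\epsilon\ge 0$. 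Hence $g_\epsilon$ is subharmonic, so is $2u$ (or $u\equiv-\infty$), and the argument is complete.

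I expect (ii) to be the main obstacle: one must justify differentiating $\tau(\log P(w))$ through the trace (normality of $\tau$ and the integral formula for the derivative of $\log$), treat the limit $\epsilon\downarrow 0$ carefully where $A(w)$ degenerates, and invoke the fact that a decreasing limit of subharmonic functions is subharmonic or $\equiv-\infty$. Step (i) and the identity $\log\rdet G(A)=\tfrac12\,\tau(\log(A^{*}A))$ are routine once conventions are fixed. This route does not use residual finiteness of $G$; an alternative, more arithmetic one --- reduce to a primitive integral $\psi$, write $G=\ker\psi\rtimes\Z$, view $A$ over the associated skew Laurent ring, and extend the Mahler-measure computation that settles $G=\Z$ by a Newton-polygon argument with $\mathcal N(\ker\psi)$-valued data --- is also possible, and there residual finiteness would enter via a finite-quotient approximation.
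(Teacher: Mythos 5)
Your proof is correct in its essentials and takes a genuinely different route from the one the paper relies on. The paper does not prove this statement; it quotes it from Liu \cite[Theorem 5.1]{liu2017degree}, and Liu's argument is arithmetic and approximative: it exploits residual finiteness to pass to finite quotients where $\rdet G$ becomes an ordinary determinant, and multiplicative convexity is extracted from a Mahler-measure/Newton-polygon analysis together with L\"uck-type approximation. Your route is complex-analytic and operator-theoretic: you realize $u(w)=\log\rdet G(\kappa(\psi,e^{w})A)$ as a function of one complex variable, prove it depends only on $\operatorname{Re}(w)$ (this is right: $\operatorname{Ad}(U_{\chi_\theta})$ multiplies $g\in G$ by $\chi_\theta(g)$, fixes $\delta_e$, hence preserves the canonical trace, the spectral density function, and therefore $\rdet G$), and prove it subharmonic by the $\epsilon$-regularization. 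The Laplacian computation
\[
\partial_{w}\partial_{\bar w}\,g_\epsilon=\tau\bigl((I-BP^{-1}B^{*})\,CP^{-1}C^{*}\bigr)\ge 0
\]
is correct, since $CP^{-1}C^{*}\ge 0$, $I-BP^{-1}B^{*}=\epsilon(BB^{*}+\epsilon)^{-1}\ge 0$, and the trace of a product of two positive elements is nonnegative. The decreasing limit $g_\epsilon\downarrow 2u$ (operator monotonicity of $\log$ plus normality of $\tau$) makes $u$ subharmonic or $\equiv-\infty$ on the connected domain $\C$, a subharmonic function constant on vertical lines is convex in $\operatorname{Re}(w)$, and a $[-\infty,\infty)$-valued convex function on $\R$ that is $-\infty$ at one point is $-\infty$ everywhere; so the dichotomy falls out. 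What each approach buys: Liu's method stays within the combinatorics of Mahler measure, which he needs elsewhere for Thurston-norm degree statements; yours is conceptually cleaner, is a direct cousin of the subharmonicity arguments underlying Brown measure, and --- as you point out --- never uses residual finiteness, so it actually establishes the dichotomy for every finitely generated $G$, a strictly stronger statement. The remaining points to nail down are exactly the ones you flag: that $w\mapsto A(w)$ is a norm-holomorphic $M_p(\mathcal N(G))$-valued map (immediate, since only finitely many $g$ occur, hence $g_\epsilon$ is smooth and the differentiation under $\tau$ is legitimate), and the classical facts that a decreasing sequence of subharmonic functions on a connected open set is subharmonic or $\equiv-\infty$ and that the $-\infty$ set of a non-degenerate subharmonic function is polar and in particular contains no line.
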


With the above preparations, we are now ready to prove Theorem \ref{TheoremA} for hyperbolic or mixed 3-manifolds.
\begin{thm}\label{TorsionForHyper}
Suppose $N$ is a compact orientable irreducible 3-manifold with empty or incompressible toral boundary. Assume that $N$ is hyperbolic or mixed. Then $\tautwo(N,\rho)>0$. 
\end{thm}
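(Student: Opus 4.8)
The plan is to reduce to the fibered case via the virtual fibering results quoted above and then to combine the restriction formula of Lemma \ref{BasicProppertyTorsion}(2) with Liu's convexity theorem (Theorem \ref{LiuToolTheorem}). First I would invoke Agol's criterion together with the virtual specialness of 3-manifolds containing a hyperbolic piece to produce a finite regular cover $\widetilde N\to N$ with deck group of index $d$ such that $\widetilde N$ fibers over the circle. Writing $\widetilde\rho=\rho|_{\pi_1(\widetilde N)}$ and using Lemma \ref{BasicProppertyTorsion}(2), we have $\tautwo(\widetilde N,\widetilde\rho)=\tautwo(N,\rho)^d$, so it suffices to prove positivity of $\tautwo(\widetilde N,\widetilde\rho)$, i.e. we may and do assume from the start that $N$ itself fibers over the circle, say with fiber surface $\Sigma$ and monodromy; in particular $N$ has a cohomology class $\psi\in H^1(N;\mathbb Z)$ dual to the fiber, which is fibered.

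Next I would build an explicit CW structure on the fibered manifold $N$ following Dubois--Friedl--L\"uck \cite{dubois2016l2}: take a CW structure on the fiber surface $\Sigma$ and form the mapping torus, so that $\pi_1(N)=\pi_1(\Sigma)\rtimes\langle\mu\rangle$ where $\mu$ is the stable letter with $\psi(\mu)=1$ and $\psi|_{\pi_1(\Sigma)}=0$. Using $\gamma=\mathrm{id}:\,G\to G$ as the admissible triple, the twisted $L^2$-chain complex $C^{(2)}_*(N,\rho)$ computed from Proposition \ref{MatrixRepPartialTwist} has the feature that every entry of the boundary matrices, after the diagonal twist, is a $\mathbb C[\pi_1(\Sigma)]$-combination of powers of $\mu\rho(\mu)$; concretely the complex is the ``algebraic mapping torus'' of the chain self-map induced by the monodromy, so its $L^2$-torsion can be read off from a single square matrix $A$ over $\mathbb C G$ (the Fox-calculus matrix of $\mathrm{id}-(\text{monodromy})$, diagonally twisted) via Lemma \ref{MatrixCalculationTorsion}. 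The key structural observation, exactly as in \cite{dubois2016l2} and exploited in \cite{liu2017degree}, is that $A$ is obtained from a matrix $A_0$ over $\mathbb C[\ker\psi]$ by the Alexander twist $\kappa(\psi,1)$ — that is, $A=\kappa(\psi,1)(A_0\cdot\rho(A_0))$ up to the block substitution of Proposition \ref{MatrixRepPartialTwist}, because $\mu$ carries $\psi$-value $1$ and $\pi_1(\Sigma)$ carries $\psi$-value $0$.

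With this in place I would apply Theorem \ref{LiuToolTheorem} to the one-parameter family $t\mapsto \rdet{G}(\kappa(\psi,t)A)$. By Liu's theorem this function is either identically zero or everywhere positive and multiplicatively convex. To exclude the identically-zero alternative I would evaluate at a suitable limit: as in \cite{liu2017degree}, for $t$ large or small the leading behavior of $\rdet{G}(\kappa(\psi,t)A)$ is governed by the top/bottom $\psi$-graded pieces of $A$, whose regular Fuglede--Kadison determinants are genuine Mahler measures of nonzero Laurent polynomials over $\mathbb C[\ker\psi]$ — nonzero because $\rho$ is unimodular so the relevant leading coefficients are invertible matrices and the corresponding single-variable polynomials $\det(z\rho(\mu)-I)$ are not the zero polynomial (Lemma \ref{MahlerMasureFormula}). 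Hence the family is not identically zero, so $\rdet{G}(\kappa(\psi,1)A)>0$; the same reasoning applies to the two flanking matrices $A(J)$, $C(L)$ appearing in Lemma \ref{MatrixCalculationTorsion}, and since all three regular Fuglede--Kadison determinants are strictly positive, weak $L^2$-acyclicity and determinant class hold and $\tautwo(N,\rho)=\rdet{G}(B(J,L))\cdot\rdet{G}(A(J))^{-1}\cdot\rdet{G}(C(L))^{-1}>0$. Undoing the cover via the restriction formula gives $\tautwo(N,\rho)>0$ in general.

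The main obstacle I expect is not the convexity input — that is black-boxed by Theorem \ref{LiuToolTheorem} — but verifying rigorously that the diagonally twisted chain complex of the DFL CW structure really does factor through a single Alexander twist $\kappa(\psi,t)$ applied to matrices defined over $\mathbb C[\ker\psi]$, i.e. that the monodromy presentation is compatible with the diagonal twisting functor of Section \ref{section 2}. This is a bookkeeping-heavy point: one must track how $\rho(\mu)$ and $\rho(\pi_1(\Sigma))$ interleave in each block entry $\Lambda_{i,j}\rho(\Lambda_{i,j})$ and check that pulling out the $\mu$-powers as a scalar Alexander twist on the $\mathbb C G$-level commutes with the block substitution of Proposition \ref{MatrixRepPartialTwist}. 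Once that compatibility is established, the positivity is a direct consequence of Liu's theorem exactly as in the $n=1$ $L^2$-Alexander case.
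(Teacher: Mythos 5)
Your approach matches the paper's proof essentially step for step: reduce to the fibered case via virtual fibering and the restriction formula, build the Dubois--Friedl--L\"uck mapping-torus CW structure, observe that the twisted boundary matrices are of the form $I - h\rho(h)A_\rho$ with $A_\rho$ defined over $\mathbb C[\ker\psi]$ so that they fit into a one-parameter Alexander twist family $\kappa(\psi,t)$, and then invoke Liu's Theorem \ref{LiuToolTheorem} together with Lemma \ref{MatrixCalculationTorsion}. The only place where the paper is tighter is the step ruling out the ``identically zero'' alternative of Liu's dichotomy: rather than appealing to leading $\psi$-graded pieces and Mahler measures, the paper simply notes (citing \cite[Proposition 8.8]{dubois2016l2}) that $S(t)=I-t\,h\rho(h)$ and $T(t)=I-t\,h\rho(h)A_\rho$ are invertible with regular Fuglede--Kadison determinant $1$ for all sufficiently small $t>0$, which follows cleanly from the operator norm being small; your version works but would need the grading argument spelled out to be airtight.
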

\begin{proof}
Since twisted $L^2$-torsion behaves multiplicatively with respect to finite covers by Lemma \ref{BasicProppertyTorsion}, we may assume without loss of generality that $N$ itself fibers over circle.

The following procedure is analogous to  \cite[Theorem 8.5]{dubois2016l2}. Denote by $\Sigma$ a fiber of $N$, and $f:\Sigma\ra\Sigma$ the monodromy such that $N$ is homeomorphic to the mapping torus
\[T_f(N)=\Sigma\times [-1,1]/(x,-1)\sim(f(x),1).\]
We can assume by isotopy that $f$ has a fixed point $P$. Construct a CW structure $X$ modeled on $\Sigma$ with a single 0-cell $P$, $k$ 1-cells $E_1,\cdots,E_n$, and a 2-cell $\sigma$. By CW approximation, there is a cellular map $g:\Sigma\ra \Sigma$ homotopic to $f$. Then the mapping torus $T_g(\Sigma)$ is homotopy equivalence to $N$, which is a simple homotopy equivalent since the Whitehead group of a fibered 3-manifold is trivial, see \cite[Theorem 19.4, Theorem 19.5]{waldhausen1978algebraicPart2}. Hence by Lemma \ref{BasicProppertyTorsion} we have
\[\tautwo(N,\rho)=\tautwo(T_g(\Sigma),\rho).\]
We proceed to describe a CW complex for the mapping torus $T_g(\Sigma)$. Suppose $\pi_1(N)=\pi_1(T_g(\Sigma))=G$. The cells in each dimensions are
\[\{\sigma\times I\},\ \{\sigma,E_1\times I,\cdots,E_k\times I\},\ \{E_1,\cdots,E_k,P\times I\},\ \{P\}\]
where $I=[-1,1]$. Let $e_i:=[E_i]\in G,\  h:=[P\times I]\in G$ be the fundamental group elements represented by the corresponding loops. Denote by $\psi\in H^1(G;\R)$ the 1-cohomology class dual to the fiber $\Sigma$, then we have
\[\psi(h)=1,\quad \psi(e_1)=\cdots=\psi(e_k)=0.\]
The CW chain complex of $\widehat{T_g(\Sigma)}$ has the form
\[C_*(\widehat{T_g(\Sigma)})=(0\longrightarrow \C G\stackrel{\partial_3}\longrightarrow \C G^{k+1} \stackrel{\partial_2}\longrightarrow \C G^{k+1} \stackrel{\partial_1} \longrightarrow \C G\stackrel{\partial_0} \longrightarrow 0 )\]
in which
\[\partial_3=(1-h,*,\cdots,*),\quad \partial_2=\begin{pmatrix}
* & *\\
I^{k\times k}-h\cdot A & *
\end{pmatrix},\quad\partial_1=\begin{pmatrix}
*\\
1-h
\end{pmatrix}
\]
and $``*"$ stands for matrices of appropriate size, $A$ is a matrix over $\C[\ker\psi]$ of size $k\times k$. Denote by $A_\rho$ the matrix $A$ twisted by $\rho$, as in Proposition \ref{MatrixRepPartialTwist}, then the $L^2$-chain complex of $T_g(\Sigma)$ twisted by $(G,\rho;\operatorname{id}_G)$ is
\[C_*^{(2)}(T_g(\Sigma),\rho)=(0\longrightarrow l^2(G)^n\stackrel{\partial_3^\rho}\longrightarrow l^2(G)^{n(k+1)} \stackrel{\partial_2^\rho}\longrightarrow l^2(G)^{n(k+1)} \stackrel{\partial_1^\rho} \longrightarrow l^2(G)^n\stackrel{} \longrightarrow 0 )\]
in which
\[\partial_3^\rho=(I^{n\times n}-h\rho(h),*,\cdots,*),\quad \partial_2^\rho=\begin{pmatrix}
* & *\\
I^{nk\times nk}-h\cdot\rho(h)A_\rho & *
\end{pmatrix},\quad\partial_1^\rho=\begin{pmatrix}
*\\
I^{n\times n}-h\rho(h)
\end{pmatrix}.
\]
Consider the following two matrices
\[S:=I^{n\times n}-h\rho(h),\quad T:=I^{nk\times nk}-h\rho(h)A_\rho\]
and the matrices under the Alexander twist associated to $(\psi,t)$:
\[S(t):=\kappa(\psi,t)S=I^{n\times n}-t\cdot h\rho(h),\quad T(t):=\kappa(\psi,t)T=I^{nk\times nk}-t\cdot h\rho(h)A_\rho.\]
For any real number $t>0$ sufficiently small, the two matrices $S(t)$ and $T(t)$ are both invertible with regular Fugelede-Kadison determinant equal to 1, see \cite[Proposition 8.8]{dubois2016l2}. Then Liu's Theorem \ref{LiuToolTheorem} applies to show that these two Fugelede-Kadison determinants are positive when $t=1$. It follows from Theorem \ref{MatrixCalculationTorsion} that $\tautwo(N,\rho)=\rdet G T(1)\cdot \rdet G S(1)^{-2}$ is positive.
\end{proof}

Theorem \ref{TheoremA} then follows from Theorem \ref{TorsionForGraph} and Theorem \ref{TorsionForHyper}.

\section{Continuity of twisted $L^2$-torsion on representation varieties}\label{section 5}
Let $N$ be any compact orientable irreducible 3-manifold with empty or incompressible toral boundary, set $G:=\pi_1(N)$. Suppose that $G$ is infinite, and denote by $\mathcal R_n(G):=\operatorname{Hom}(G,\slC n)$ the representation variety, then Theorem \ref{TheoremA} implies that the twisted $L^2$-torsion can be viewed as a positive function
\[\rho\longmapsto\tautwo(N,\rho),\ \rho\in \mathcal R_n(G).\]
The continuity of this torsion function is an interesting but rather hard question. The work of Liu \cite[Theorem 1.2]{liu2017degree} have shown that the torsion function is continuous in $\operatorname{Hom}(G,\R)$ along the Alexander twists, we remark that in his article the twist is not unimodular, and an equivalence class for torsion functions is introduced to guarantee well-definedness. If $N$ is hyperbolic and $\rho_0:G\ra \operatorname{PSL(2,\C)}$ is a holonomy representation associated to the hyperbolic structure, and $\rho\in\mathcal{R}_2(G)$ is a lifting of $\rho_0$ (such lifting always exists, see \cite[Corollary 2.2]{culler1986lifting}), then Bernard and Raimbault \cite{benard2022twisted} proved that the torsion function is analytic near $\rho$. The continuity of the torsion function in general is wide open. In this section we present a partial result on the continuity of the twisted $L^2$-torsion function, namely Theorem \ref{TheoremB}. We start with a brief discussion of the $L^2$-Alexander torsions since it is closely related to the proof of Theorem \ref{TheoremB}.

\subsection{$L^2$-Alexander torsions}
The $L^2$-torsion twisted by 1-dimensional representations are called \emph{the $L^2$-Alexander torsion}. To be precise, for any 1-cohomology class $\psi\in H^1(G;\R)$ and any real number $t>0$, the \emph{$L^2$-Alexander torsion} of $N$ associated to $(\psi,t)$ is defined to be
\[A^{(2)}(N,\psi,t):=\tautwo (C^{(2)}_*(N,\psi_t)).\]
Recall that $\psi_t:G\ra \C^\times$ maps $g\in G$ to $t^{\psi(g)}$ is the representation associated to $(\psi,t)$. Since $\psi_t$ is not a unimodular representation, the $L^2$-Alexander torsion depends on the based $\C G$-chain complex $C_*(\widehat N)$. Indeed, altering the $\C G$-basis of $C_*(\widehat N)$, the base change matrix for $C_*^{(2)}(N,\psi_t)$ will be a permutation matrix with entries $\pm t^{\pm\psi(g_i)}g_i$ (compare Proposition \ref{InvarianceOfBasis}), whose regular Fuglede-Kadison determinant is $t^{\sum_i \pm\psi(g_i)}$. Since $g_i\in G$ are independent of $\psi$ and $t$, the continuity of $A^{(2)}(N,\psi,t)$ as a function of $(\psi,t)\in H^1(G;\R)\times \R_+$ is irrelevant of the choice of cellular basis, here $H^1(N;\R)$ is given the usual real vector space topology.

In literature 
\cite{dubois2015l2, dubois2016l2}, one consider $A^{(2)}(N,\psi,t)$ as a function of $t$, and introduce an equivalence relation between functions. Namely, two functions $f_1,f_2:\R_+\ra [0,+\infty)$ are equivalent if and only if there exists a real number $r$ such that 
\[f_1(t)=t^r\cdot f_2(t)\]
holds for all $t>0$. In this case we denote by $f_1\dot{=} f_2$. So the equivalence class of $A(N,\psi,t)$ as a function of $t$ does not depend on the choice of cellular basis.

Another way to cure the ambiguity is to modify $\psi_t$ to be a unimodular 2-dimensional representation. Set \[\psi_t\oplus\psi_{t^{-1}}:G\ra\slC2,\quad g\mapsto\begin{pmatrix}
t^{\psi(g)} & 0\\
0 & t^{-\psi(g)}
\end{pmatrix}.\]
Then it is easy to observe that $C^{(2)}_*(N,\psi_t\oplus\psi_{t^{-1}})=C^{(2)}_*(N,\psi_t)\oplus C^{(2)}_*(N,\psi_{t^{-1}})$ and hence by L\"uck \cite[Theorem 3.35]{luck2002l2} we have
\[A^{(2)}(N,\psi,t)\cdot A^{(2)}(N,\psi,t^{-1})=\tautwo(N,\psi_t\oplus\psi_{t^{-1}})\]
which does not depend on the choice of cellular basis. This fact motivates the following definition.
\begin{defn}
For any $\psi\in H^1(G;\R)$ and $t>0$, we define the \emph{symmetric $L^2$-Alexander torsion of $N$ associated to $(\psi,t)$} to be \[A^{(2)}_{\operatorname{sym}}(N,\psi,t):=\tautwo(N,\psi_t\oplus\psi_{t^{-1}})^{\frac12}.\]
\end{defn}
It is shown in \cite[Chapter 6]{dubois2016l2} that the $L^2$-Alexander torsion satisfies
\[A^{(2)}(N,\psi,t)=t^{-\psi(c_1(e))}\cdot A^{(2)}(N,\psi,t^{-1})\]
where $c_1(e)\in H_1(N;\Z)$ is independent of $(\psi,t)$. This shows that \[A^{(2)}_{\operatorname{sym}}(N,\psi,t)=t^r\cdot A^{(2)}(N,\psi,t)\]
for some real number $r$.
We remark that, as a function of $(\psi,t)$, the continuity of $A^{(2)}(N,\psi,t)$ defined by any CW structure is equivalent to the continuity of $A^{(2)}_{\operatorname{sym}}(N,\psi,t)$.

As an illustration of the various definitions, we rediscover the $L^2$-Alexander torsion $A^{(2)}(N,\psi,t)$ for graph manifold $N$ using Theorem \ref{TorsionForGraph}. The calculation is first carried out by Herrmann \cite{herrmann20162} for Seifert fibering space and by Dubois et al. \cite{dubois2016l2} for graph manifolds.
\begin{thm}\label{TwistiedLtwoAlexanderGraph}
Let $N$ be a graph manifold with infinite fundamental group. Suppose that $N\not= S^1\times D^2$ and $N\not=S^1\times S^2$. Then a representative of the $L^2$-torsion twisted by $(\psi,t)$ is \[A^{(2)}(N,\psi,t)=\max\{1,t^{x_N(\psi)}\}\]
where $x_N$ is the Thurston norm for $H^1(N;\R)$.
\end{thm}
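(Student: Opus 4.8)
The plan is to run the unimodular two-dimensional representation $\psi_t\oplus\psi_{t^{-1}}:G\to\slC2$ through Theorem \ref{TorsionForGraph} and then translate the resulting product formula into the language of $L^2$-Alexander torsions. Since $\det(\psi_t\oplus\psi_{t^{-1}})(g)=t^{\psi(g)}\cdot t^{-\psi(g)}=1$, this is a genuine $\slC2$-representation, so Theorem \ref{TorsionForGraph} applies and computes $\tautwo(N,\psi_t\oplus\psi_{t^{-1}})$. For a Seifert piece $M\subset N$ with regular fiber $h=h_M\in\pi_1(M)$, the matrix $(\psi_t\oplus\psi_{t^{-1}})(h)$ is diagonal with eigenvalues $t^{\psi(h)}$ and $t^{-\psi(h)}$; exactly one of them has modulus at most $1$ (both equal $1$ when $\psi(h)=0$), so the quantity $\Lambda$ of Theorem \ref{TorsionForGraph} equals $\Lambda_M=\exp(-|\psi(h_M)|\cdot|\log t|)$. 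Consequently
\[\tautwo(N,\psi_t\oplus\psi_{t^{-1}})=\prod_{M}\Lambda_M^{\chi_{\operatorname{orb}}(M)}=\exp\Big(|\log t|\cdot\sum_{M}|\psi(h_M)|\cdot\big(-\chi_{\operatorname{orb}}(M)\big)\Big),\]
where $M$ runs over the Seifert pieces of the JSJ decomposition of $N$.

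The next step is to simplify and interpret the exponent. Because $N$ is irreducible with infinite fundamental group and is neither $S^1\times D^2$ nor $S^1\times S^2$, every Seifert piece $M$ satisfies $\chi_{\operatorname{orb}}(M)\le 0$: a piece with $\chi_{\operatorname{orb}}>0$ would have to be a solid torus or a spherical or $S^1\times S^2$-type manifold, forcing $N$ into the excluded list or making $\pi_1(N)$ finite. Hence each summand $|\psi(h_M)|\cdot(-\chi_{\operatorname{orb}}(M))$ is non-negative. I would then identify the exponent with the Thurston norm,
\[\sum_{M}|\psi(h_M)|\cdot\big(-\chi_{\operatorname{orb}}(M)\big)=x_N(\psi),\]
which is the known description of the Thurston norm of a graph manifold: a norm-minimizing surface dual to $\psi$ can be put in Haken position with respect to the JSJ tori, and its part in each Seifert block $M$ is, up to vertical annuli and tori (which contribute no $\chi_-$), a $|\psi(h_M)|$-sheeted horizontal branched cover of the base orbifold, contributing $|\psi(h_M)|\cdot(-\chi_{\operatorname{orb}}(M))$ to $\chi_-$, while the JSJ tori themselves contribute nothing, so $x_N(\psi)=\sum_M x_M(\psi|_M)$ with $x_M(\psi|_M)=|\psi(h_M)|\cdot(-\chi_{\operatorname{orb}}(M))$. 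I expect this identification to be the main obstacle, and I would discharge it by citing the established Thurston-norm computations for Seifert pieces and graph manifolds rather than reproving them; everything else is bookkeeping.

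It remains to pass from the symmetric torsion back to $A^{(2)}(N,\psi,t)$. By definition $A^{(2)}_{\operatorname{sym}}(N,\psi,t)=\tautwo(N,\psi_t\oplus\psi_{t^{-1}})^{1/2}=\exp\big(\tfrac12|\log t|\,x_N(\psi)\big)$, and checking the cases $t\ge 1$ and $t\le 1$ separately shows this equals $t^{-x_N(\psi)/2}\cdot\max\{1,t^{x_N(\psi)}\}$. Since $A^{(2)}_{\operatorname{sym}}(N,\psi,t)=t^{\psi(c_1(e))/2}\cdot A^{(2)}(N,\psi,t)$ with $\psi(c_1(e))$ independent of $t$, it follows that $A^{(2)}(N,\psi,t)=t^{-(\psi(c_1(e))+x_N(\psi))/2}\cdot\max\{1,t^{x_N(\psi)}\}$, that is $A^{(2)}(N,\psi,t)\dot{=}\max\{1,t^{x_N(\psi)}\}$, so $\max\{1,t^{x_N(\psi)}\}$ is a legitimate representative of the equivalence class, as claimed. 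As a sanity check, the excluded manifolds genuinely fall outside this argument: $S^1\times D^2$ has $\chi_{\operatorname{orb}}=1>0$, so the product formula ceases to produce $\max\{1,t^{x_N(\psi)}\}$, and $S^1\times S^2$ is reducible and hence outside the theory altogether.
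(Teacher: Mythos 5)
Your proposal is correct and follows essentially the same route as the paper: apply Theorem \ref{TorsionForGraph} to $\rho=\psi_t\oplus\psi_{t^{-1}}$, identify $\Lambda_M$ with $\exp(-|\psi(h_M)|\,|\log t|)$, reduce the exponent to the Thurston norm via the known Seifert-piece and graph-manifold computations (the paper cites Herrmann and Eisenbud--Neumann for exactly the identification you sketch), and then convert $A^{(2)}_{\operatorname{sym}}$ back to $A^{(2)}$ up to the $t^r$ ambiguity. The only cosmetic difference is that you carry out the two ranges $t\ge1$ and $t\le1$ uniformly via $|\log t|$, while the paper computes only for $t\ge1$ and then invokes the defining symmetry of $A^{(2)}_{\operatorname{sym}}$; the substance is identical.
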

\begin{proof}
For $t\geqslant1$, set $\rho:=\psi_t\oplus \psi_{t^{-1}}$, then by Theorem \ref{TorsionForGraph}, we have
\[A^{(2)}_{\operatorname{sym}}(N,\psi,t)^2=\tautwo(N,\psi_t\oplus \psi_{t^{-1}})=\prod_{M\subset  N \text{ is a  Seifert piece}} t^{-|\psi(h)|\cdot\chi_{\operatorname{orb}}}\]
where $h\in H^1(M;\R)$ is represented by the regular fiber of $M$ and $\chi_{\operatorname{orb}}$ is the orbifold Euler characteristic of $M/S^1$. By our assumption on $N$, we know that $\chi_{\operatorname{orb}}\leqslant 0$, so $-|\psi(h)|\cdot\chi_{\operatorname{orb}}=x_M(\psi)$ by \cite[Lemma A]{herrmann20162}, where $x_M$ is the Thurston norm for $H^1(M;\R)$. Then by \cite[Proposition 3.5]{eisenbud1985three}, we have
\[\sum_{M\subset  N \text{ is a  Seifert piece}}x_M(\psi)=x_N(\psi)\]
and then
\[A^{(2)}_{\operatorname{sym}}(N,\psi,t)^2=t^{x_N(\psi)},\quad t\geqslant1.\]
Since the symmetric $L^2$-Alexander torsion is by definition symmetric, so
\[A^{(2)}_{\operatorname{sym}}(N,\psi,t)=\max\{t^{\frac12 x_N(\psi)},t^{-\frac12 x_N(\psi)}\}\dot{=} \max\{1,t^{x_N(\psi)}\}.\]

\end{proof}

It follows that the $L^2$-Alexander torsion of graph manifolds is continuous in $(\psi,t)\in H^1(G;\R)\times\R^+$. For a general 3-manifold $N$, the continuity of the $L^2$-Alexander torsion is a hard question. Liu \cite{liu2017degree} and L\"uck \cite{luck2018twisting} independently proved that the $L^2$-Alexander torsion function is always positive. Moreover Liu proved in the same article that $A^{(2)}(N,\psi,t)$ is multiplicatively convex with respect to $t$, and in particular it is continuous. L\"uck \cite[Chapter 10]{luck2018twisting} conjectured that this function is continuous with respect to $(\psi,t)\in H^1(N;\R)\times \R^+$. We will see that this statement is true.
\begin{thm}\label{ContwrtCohomology}
Let $N$ be a compact orientable irreducible 3-manifold with empty or incompressible toral boundary. Suppose $\pi_1(N)=G$ is infinite. Then any representative of the $L^2$-Alexander torsion function $A^{(2)}(N,\psi,t)$ is continuous with respect to $(\psi,t)\in H^1(N;\R)\times \R^+$.
\end{thm}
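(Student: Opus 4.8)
The plan is to reduce the continuity statement to a multi-variable convexity statement about Fuglede-Kadison determinants, in the spirit of Liu's single-variable Theorem \ref{LiuToolTheorem}. First I would fix a CW structure $X$ for $N$ and write the based $\C G$-chain complex $C_*(\widehat X)$ once and for all; by the remark following the definition of $A^{(2)}(N,\psi,t)$, continuity of any one representative is equivalent to continuity of the one coming from $X$. Fix a basis $\psi_1,\dots,\psi_b$ of $H^1(N;\R)$ so that a general class is $\psi=\sum_k s_k\psi_k$, and write $\psi_{t}$ in terms of the data $(t;\psi) \leftrightarrow (u_1,\dots,u_b):=(t^{s_1},\dots,t^{s_b})$. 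Each boundary matrix $\partial_i^{\psi_t}$ of the twisted $L^2$-chain complex is then obtained from $\partial_i$ over $\C G$ by an Alexander \emph{multi-twist}: every monomial $g\in G$ appearing in an entry is scaled by $\prod_k u_k^{\psi_k(g)}$. The key object to study is therefore, for a fixed square matrix $A$ over $\C G$, the function
\[
(u_1,\dots,u_b)\longmapsto \rdet G\big(\kappa(\psi_1,u_1)\cdots\kappa(\psi_b,u_b)A\big),\quad u_k>0.
\]

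Next I would establish the multi-variable analogue of Liu's theorem: this function is either identically zero or multiplicatively convex in $(u_1,\dots,u_b)$, i.e. $(\log u_1,\dots,\log u_b)\mapsto\log\rdet G(\cdots)$ is convex on $\R^b$ (this is the content flagged in the introduction as Theorem \ref{MultivariableConvexityOfDeterminant}, generalizing \cite[Theorem 5.1]{liu2017degree}). The cleanest route is to deduce it from the one-variable case by restriction to lines: for any line $\ell(r)=(c_1 r + d_1,\dots,c_b r + d_b)$ in $\log$-coordinates, the restriction of $\rdet G(\cdots)$ to $\ell$ is exactly a single-variable Alexander-twist determinant $r\mapsto\rdet G(\kappa(\phi, e^r)A')$ for a suitable cohomology class $\phi=\sum c_k\psi_k$ and a suitable matrix $A'$ obtained from $A$ by the constant multi-twist $\prod u_k^{d_k}$; Theorem \ref{LiuToolTheorem} then says this restriction is either $0$ or convex. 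A convex-or-zero behavior on every line, combined with the fact that the zero locus (in $\log$-coordinates) is easily seen to be a linear subspace — or, more carefully, a standard argument that a function convex-or-$-\infty$ on every line and not identically $-\infty$ is convex on the region where it is finite and that region is convex — gives global multiplicative convexity. Here the main care is to handle the possibility that $\rdet G$ vanishes: one shows the ``finite'' locus is an open convex set and the function is continuous and convex there, and then checks that in our topological situation the vanishing does not actually occur near the points of interest because of weak acyclicity, so that in fact the determinant of each relevant boundary (after the symmetrization trick, or via Lemma \ref{MatrixCalculationTorsion}) is everywhere positive.

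From multiplicative convexity of each $\rdet G(\partial_i^{\psi_t})$ as a function of $(u_1,\dots,u_b)$, continuity is immediate: a finite multiplicatively convex function is continuous, and $A^{(2)}(N,\psi,t)$ is a fixed finite alternating product of such determinants (using Lemma \ref{MatrixCalculationTorsion} to express it, exactly as in the proof of Theorem \ref{TorsionForHyper}, or using the symmetric version $A^{(2)}_{\mathrm{sym}}$ and Theorem \ref{MultivariableConvexityOfDeterminant} applied to the unimodular twist $\psi_t\oplus\psi_{t^{-1}}$). Finally I would translate back: the map $(s_1,\dots,s_b,t)\mapsto(u_1,\dots,u_b)=(t^{s_1},\dots,t^{s_b})$ is continuous from $H^1(N;\R)\times\R^+$ to $(\R^+)^b$, so continuity of $A^{(2)}$ in the $u$-coordinates yields continuity in $(\psi,t)$.

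The main obstacle I expect is the passage from the single-variable result to genuine multi-variable convexity while correctly accounting for the regular-determinant convention: one must rule out, or carefully package, the degenerate case where $\rdet G(\kappa(\cdots)A)$ vanishes on a positive-codimension set, since convexity of a $[0,\infty)$-valued function is delicate at its zero set. I anticipate handling this exactly as Liu does in the one-variable case — showing the relevant boundary operators in the twisted $L^2$-complex of a (virtually) fibered 3-manifold are, up to the block structure exhibited in the proof of Theorem \ref{TorsionForHyper}, of the special form $I - u\, h\rho(h) A_\rho$ for which the determinant is positive for all positive values of the twisting parameters — so that on the region of interest every factor is strictly positive and the convex-or-zero dichotomy collapses to honest convexity, whence continuity.
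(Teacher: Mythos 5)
Your restriction-to-lines argument for establishing multi-variable (multiplicative) convexity of $V_\Phi(T)=\rdet G(\kappa(\Phi,T)\Omega)$ is correct and is a genuinely different route from the paper's proof of Theorem \ref{MultivariableConvexityOfDeterminant}, which proceeds by induction on the number of twisting coordinates using a change-of-variable substitution $\psi_1=\phi_1+\lambda\phi_k$ to absorb the last coordinate. Your approach exploits the two facts you identify: (i) the restriction of the multi-twist to a line $(c_1r+d_1,\dots,c_br+d_b)$ in $\log$-coordinates is a single Alexander twist $\kappa(\sum_k c_k\psi_k,e^r)$ applied to the fixed matrix $\Omega'=\kappa(\sum_k d_k\psi_k,e)\Omega$ over $\C G$, so Theorem \ref{LiuToolTheorem} applies verbatim; and (ii) convexity in $\R^n$ is equivalent to convexity on every line, and the zero-or-convex dichotomy forces global positivity from positivity at a single point. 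Both steps are sound; arguably your route is cleaner, while the paper's induction is more elementary and avoids appealing to the equivalence between line-wise and global convexity of extended-real-valued functions.

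However, there is a genuine gap: your proof only addresses manifolds that (virtually) fiber over the circle, relying on the special mapping-torus CW structure exhibited in the proof of Theorem \ref{TorsionForHyper} to produce square matrices $S,T$ with regular Fuglede--Kadison determinant positive at small $t$. Virtual fibering, via Agol's criterion and virtual specialness, covers hyperbolic and mixed $N$, but a graph manifold need not virtually fiber over the circle, and in that case you have no analogous presentation. The paper handles graph manifolds by a completely separate argument: it derives the explicit formula $A^{(2)}(N,\psi,t)\dot{=}\max\{1,t^{x_N(\psi)}\}$ (Theorem \ref{TwistiedLtwoAlexanderGraph}), which is visibly continuous in $(\psi,t)$ because the Thurston norm $x_N$ is. To complete your argument you would need either to reproduce that explicit computation, or to show that for an arbitrary CW structure on $N$ one can choose index sets $J,L$ as in Lemma \ref{MatrixCalculationTorsion} so that $\rdet G(A(J))$ and $\rdet G(C(L))$ are positive at one value of $(\psi,t)$ and then invoke the line argument to propagate positivity; the latter would require some care to justify the existence of such $J,L$ for a non-fibered graph manifold.
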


Theorem \ref{TheoremB} is now a  corollary of Theorem \ref{ContwrtCohomology}, as we restate here
\begin{thm}\label{ContinuityOfAbelianReps}
Let $N$ be a compact orientable irreducible 3-manifold with empty or incompressible toral boundary. Suppose $\pi_1(N)=G$ is infinite. Define  $\mathcal R^{\operatorname{t}}_n(G)$ to be the subvariety of $\mathcal R_n(G)$ consisting of upper triangular representations. Then the twisted $L^2$-torsion function
\[\rho\longmapsto \tautwo(N,\rho)\]
is continuous with respect to $\rho\in \mathcal R^{\operatorname{t}}_n(G)$.

\end{thm}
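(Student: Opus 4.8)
The plan is to deduce Theorem \ref{ContinuityOfAbelianReps} from Theorem \ref{ContwrtCohomology} by factoring the $L^2$-torsion twisted by an upper triangular representation into a product of $L^2$-Alexander torsions attached to its diagonal characters.

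Fix once and for all a CW structure $X$ for $N$ together with a lift of each cell, so that all twisted torsions below are computed with a definite cellular basis. Let $\rho\in\mathcal R^{\operatorname{t}}_n(G)$ be upper triangular. Since every $\rho(g)$ is upper triangular and unimodular, its diagonal entries are nowhere-vanishing and define one-dimensional characters; let $\eta_1,\dots,\eta_n\colon G\to\C^\times$ be the characters occurring as the successive quotients of the standard flag $0\subset\langle e_n\rangle\subset\langle e_{n-1},e_n\rangle\subset\cdots\subset V$, which consists of $\C G$-submodules of $V$ for the $\rho$-twisted action. Tensoring this flag over $\C$ with $C_*(\widehat X)$, applying the diagonal twisting functor, and $l^2$-completing produces a filtration of the Hilbert $\mathcal N(G)$-chain complex $C^{(2)}_*(N,\rho)$ by subcomplexes whose successive quotients are the complexes $C^{(2)}_*(N,\eta_i)$.

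Next I would establish the splitting formula
\[\tautwo(N,\rho)=\prod_{i=1}^{n}\tautwo(N,\eta_i).\]
Writing $\eta_i=u_i\cdot|\eta_i|$ with $u_i\colon G\to S^1$ unitary, the twist $\C G\to\C G$, $g\mapsto u_i(g)g$, is a $*$-automorphism which extends to $\mathcal N(G)$ and therefore preserves weak acyclicity, the determinant-class condition, and all Fuglede--Kadison determinants; hence $\tautwo(N,\eta_i)=\tautwo(N,|\eta_i|)=A^{(2)}(N,\phi_i,e)$, where $\phi_i:=\log|\eta_i|\in H^1(N;\R)$ and $e=\exp(1)$. By the positivity of the $L^2$-Alexander torsion (Liu \cite{liu2017degree}, L\"uck \cite{luck2018twisting}) each $C^{(2)}_*(N,\eta_i)$ is weakly acyclic and of determinant class, and by Theorem \ref{TheoremA} so is $C^{(2)}_*(N,\rho)$. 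One may then induct on the length of the filtration, applying at each stage the multiplicativity of $L^2$-torsion along a short exact sequence of weakly acyclic, determinant-class Hilbert chain complexes --- where the correction term coming from the long exact homology sequence vanishes, cf. \cite[Theorem 3.35]{luck2002l2} --- to reach the displayed identity.

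Continuity is then immediate. The assignment $\rho\mapsto\phi_i$, sending $\rho$ to the class $g\mapsto\pm\log|\rho(g)_{i,i}|$, is continuous as a map $\mathcal R^{\operatorname{t}}_n(G)\to H^1(N;\R)$, since its values on a finite generating set of $G$ depend continuously on the matrix entries of $\rho$; composing with the continuous function $\phi\mapsto A^{(2)}(N,\phi,e)$ furnished by Theorem \ref{ContwrtCohomology} shows that each $\rho\mapsto\tautwo(N,\eta_i)$ is continuous, whence so is the finite product $\tautwo(N,\rho)$. The main obstacle is the splitting formula of the third paragraph: the intermediate representations along the flag are generally not unimodular, so one must be careful that their torsions (now basis-dependent, though the dependence cancels in the product) are handled consistently, and --- more essentially --- one must know that every chain complex occurring in the filtration is weakly acyclic and of determinant class before invoking the sum formula, which is exactly where Theorem \ref{TheoremA} and the Liu--L\"uck positivity of the $L^2$-Alexander torsion are used. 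The genuinely hard analytic input, continuity of $A^{(2)}(N,\psi,t)$ in the cohomology variable, is isolated in Theorem \ref{ContwrtCohomology} and taken as given.
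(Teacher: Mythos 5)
Your proposal follows essentially the same route as the paper: both filter $V$ by the standard $\rho$-invariant flag, reduce to the one-dimensional diagonal characters, discard the unitary part to land on $A^{(2)}(N,\phi_i,e)$, and conclude from the continuity of the $L^2$-Alexander torsion in the cohomology variable (Theorem \ref{ContwrtCohomology}). The only cosmetic difference is that you re-derive the multiplicativity along the flag from the short-exact-sequence formula \cite[Theorem~3.35]{luck2002l2}, whereas the paper cites \cite[Lemma~3.3]{luck2018twisting} directly for the same splitting.
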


\begin{proof}
Fix a CW structure for $N$ and fix a choice of cell-lifting to $\widehat N$, so we can talk about the $L^2$-Alexander torsion unambiguously. For any $\rho\in \mathcal R^{\operatorname{t}}_n(G)$, we can assume that
\[\rho(g)=\begin{pmatrix}
\chi_1(g) & \cdots & *\\
 & \ddots & \vdots \\
 & & \chi_n(g)
\end{pmatrix}\]
where $\chi_k:G\ra \C^\times$ are characters. The modulus of those characters can be written as
\[|\chi_k|=e^{\phi_k},\quad g\longmapsto e^{\phi_k(g)}\]
for some real 1-cohomology class $\phi_k\in H^1(G;\R)$. The classes $\phi_1,\cdots,\phi_n$ are continuous with respect to $\rho\in \mathcal R^{\operatorname{t}}_n(G)$.

Let $V_n$ be the $G$-invariant subspace of $V$ corresponding to $\chi_n$, and let $V':=V/V_n$, then there is an exact sequence of $G$-representations
\[0\longrightarrow V_n\longrightarrow V\longrightarrow V'\longrightarrow 0\]
where the $G$-actions are given by
\[\rho_n(g)=\chi_n(g),\quad \rho(g)=\begin{pmatrix}
\chi_1(g) & \cdots & *\\
 & \ddots & \vdots \\
 & & \chi_n(g)
\end{pmatrix},\quad \rho'(g)=\begin{pmatrix}
\chi_1(g) & \cdots & *\\
 & \ddots & \vdots \\
 & & \chi_{n-1}(g)
\end{pmatrix}\]
respectively. Then by L\"uck \cite[Lemma 3.3]{luck2018twisting}, we have
\[\tautwo(N,\rho)=\tautwo(N,\rho_n)\tautwo(N,\rho').\]
Since unitary twists have no effects on $L^2$-torsions by L\"uck \cite[Theorem 4.1]{luck2018twisting}, we have \[\tautwo(N,\rho_n)=\tautwo(N,e^{\phi_n})=A^{(2)}(N,\phi_n,e).\] The above process can then be applied to $\rho'$ and finally we have the formula
\[\tautwo(N,\rho)=A^{(2)}(N,\phi_1,e)\cdots A^{(2)}(N,\phi_n,e).\]
Since the cohomology classes $\phi_1,\cdots,\phi_n$ vary continuously with respect to $\rho\in\mathcal R^{\operatorname{t}}_n(G)$, the conclusion follows from Theorem \ref{ContwrtCohomology}.
\end{proof}

The following part of this section is devoted to the proof of Theorem \ref{ContwrtCohomology}. We will need the notion of Alexander multi-twists.
\subsection{Alexander multi-twists of matrices}
Recall that $G$ is any finitely generated, residually finite group. For any collection of 1-cohomology classes $\Phi=(\phi_1,\cdots,\phi_n)\in \prod_{i=1}^n H^1(G;\R)$ and any collection of positive real numbers $T=(t_1,\cdots,t_n)\in \R^n_+$, we define a $\C G$-homomorphism
\[\kappa(\Phi,T):\C G\ra\C G,\quad g\ra t_1^{\phi_1(g)}\cdots t_n^{\phi_n(g)}\cdot g,\ g\in G.\]
This is called the \emph{Alexander multi-twist of $\C G$ associated to $(\Phi,T)$}. 

\begin{prop}\label{PropertyOfMultiTwist} Basic properties of the Alexander multi-twist:

(1) (Associativity) Suppose
$\Phi=(\phi_1,\cdots,\phi_n),\ T=(t_1,\cdots,t_n).$
Then
\[\kappa(\Phi,T)=\kappa(\phi_1,t_1)\circ\cdots\circ\kappa(\phi_n,t_n).\]

(2) (Commutativity) $\kappa(\phi_1,t_1)\circ\kappa(\phi_2,t_2)=\kappa(\phi_2,t_2)\circ\kappa(\phi_1,t_1)$.

(3) (Change of coordinate) Let $r_1, r_2\in\R$, then we have
\[\kappa(r_1\phi_1+r_2\phi_2,t)=\kappa(\phi_1,t^{r_1})\circ\kappa(\phi_2,t^{r_2}).\]
\[\kappa(\phi,t_1^{r_1}t_2^{r_2})=\kappa(r_1\phi,t_1)\circ\kappa(r_2\phi,t_2).\]
\end{prop}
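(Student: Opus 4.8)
The plan is to verify each of the three identities by evaluating both sides on the group basis of $\C G$. Since $\kappa(\Phi,T)$ and each $\kappa(\phi_i,t_i)$ are $\C$-linear endomorphisms of $\C G$ and $G$ is a $\C$-basis of $\C G$, it suffices to check that the relevant maps agree on an arbitrary $g\in G$. The single structural fact that does the work is that every cohomology class $\phi_i\colon G\to\R$ is a group homomorphism, so that $g\mapsto t_i^{\phi_i(g)}$ is multiplicative in $g$; this is already what makes $\kappa(\Phi,T)$ well defined, and once the reduction to group elements is made, all three assertions collapse to elementary identities among positive reals via the exponent laws and the commutativity of multiplication in $\C$.

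For (1), I would apply $\kappa(\phi_n,t_n)$ to $g$, obtaining $t_n^{\phi_n(g)}g$, then apply $\kappa(\phi_{n-1},t_{n-1})$; by $\C$-linearity the already-produced scalar $t_n^{\phi_n(g)}$ is simply carried along, giving $t_{n-1}^{\phi_{n-1}(g)}t_n^{\phi_n(g)}g$, and repeating down to $\kappa(\phi_1,t_1)$ yields $t_1^{\phi_1(g)}\cdots t_n^{\phi_n(g)}g=\kappa(\Phi,T)(g)$. Assertion (2) is then immediate from (1) taken with $n=2$ in the two possible orders: both composites equal $\kappa$ of a two-term multi-twist, and the scalar $t_1^{\phi_1(g)}t_2^{\phi_2(g)}$ is insensitive to the order of its two factors. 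For (3), I would use additivity of the relevant classes on a group element $g$: the left-hand side of the first identity is $t^{(r_1\phi_1+r_2\phi_2)(g)}g=(t^{r_1})^{\phi_1(g)}(t^{r_2})^{\phi_2(g)}g$, which is $\bigl(\kappa(\phi_1,t^{r_1})\circ\kappa(\phi_2,t^{r_2})\bigr)(g)$ by (1); the second identity of (3) is the same computation with the base and exponent interchanged, via $(t_1^{r_1}t_2^{r_2})^{\phi(g)}=t_1^{(r_1\phi)(g)}t_2^{(r_2\phi)(g)}$.

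I do not expect a genuine obstacle: the proposition is a formal unwinding of the definition, and the only thing demanding a little care is to invoke $\C$-linearity at the right moment, so that scalar factors introduced by an earlier twist are undisturbed when a later twist is applied, and to keep the order of composition straight. It is perhaps worth stating explicitly, as part of the argument, that the well-definedness of $\kappa(\Phi,T)$ itself rests on the multiplicativity of $g\mapsto t_i^{\phi_i(g)}$, since precisely that multiplicativity is the mechanism behind all three claimed properties.
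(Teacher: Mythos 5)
The paper states Proposition \ref{PropertyOfMultiTwist} without any proof, treating all three identities as a routine unwinding of Definition \ref{AlexanderDefinition}; your verification is correct and is exactly the natural argument that fills in that omission. One small point of precision: the multiplicativity $t_i^{\phi_i(gh)}=t_i^{\phi_i(g)}t_i^{\phi_i(h)}$ is not needed for $\kappa(\Phi,T)$ to be a well-defined $\C$-linear endomorphism of $\C G$ (that follows from $G$ being a $\C$-basis alone); it is what makes $\kappa(\Phi,T)$ a ring endomorphism of $\C G$, which is what the paper implicitly uses when it later applies the twist to matrices, but it plays no role in the identities (1)--(3) once you have reduced to group elements as you do.
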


The Alexander multi-twist extends to an endomorphism of the matrix algebra with entries in $\C G$.

In the following part of this section, we shall fix a square matrix $\Omega$ over $\C G$, and suppose that $\rdet G(\Omega)$ is not zero. For any collection of 1-cohomology classes $\Phi=(\phi_1,\cdots,\phi_n)$ and positive real numbers $T=(t_1,\cdots,t_n)$, we introduce the notation
\[V_{\Phi}(T):=\rdet G (\kappa(\Phi,T)\Omega).\]

\begin{prop}\label{coordinate-convex}
For any fixed choice of $\Phi$, the multi-variable function $V_\Phi(T)$ is everywhere positive and is multiplicatively convex in each coordinate with respect to $T=(t_1,\cdots,t_n)\in \R^n_+$.
\end{prop}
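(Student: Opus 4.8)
The plan is to reduce Proposition~\ref{coordinate-convex} to Liu's Theorem~\ref{LiuToolTheorem} by fixing all but one of the twisting coordinates. Concretely, fix an index $i\in\{1,\cdots,n\}$ and fix values $t_j>0$ for every $j\neq i$. By the associativity and commutativity properties of Proposition~\ref{PropertyOfMultiTwist}, we can write
\[\kappa(\Phi,T)=\kappa(\phi_i,t_i)\circ\Big(\bigcirc_{j\neq i}\kappa(\phi_j,t_j)\Big),\]
so if we set $\Omega':=\big(\bigcirc_{j\neq i}\kappa(\phi_j,t_j)\big)\Omega$, a single square matrix over $\C G$, then $\kappa(\Phi,T)\Omega=\kappa(\phi_i,t_i)\Omega'$ and hence $V_\Phi(T)=\rdet G(\kappa(\phi_i,t_i)\Omega')$ as a function of the single variable $t_i$. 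Theorem~\ref{LiuToolTheorem} then says this function of $t_i$ is either constantly zero or multiplicatively convex and everywhere positive.

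The second step is to rule out the ``constantly zero'' alternative. Here I would use the hypothesis $\rdet G(\Omega)\neq 0$: setting $t_i=1$ in the one-variable family collapses $\kappa(\phi_i,1)$ to the identity, so $V_\Phi(T)\big|_{t_i=1}=\rdet G(\Omega')$. Thus it suffices to show $\rdet G(\Omega')\neq 0$. One clean way is to induct on the number of ``active'' coordinates: the claim ``$\rdet G(\kappa(\Psi,S)\Omega)\neq 0$ for all $\Psi, S$'' follows because applying a single Alexander twist $\kappa(\phi,t)$ to a matrix with nonzero regular Fuglede--Kadison determinant again yields a matrix with nonzero regular Fuglede--Kadison determinant --- indeed by Theorem~\ref{LiuToolTheorem} the function $t\mapsto\rdet G(\kappa(\phi,t)\Omega)$ is either identically zero (impossible, since its value at $t=1$ is $\rdet G(\Omega)\neq 0$) or everywhere positive, in particular nonzero at every $t$. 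Iterating this observation over the coordinates $j\neq i$ shows $\rdet G(\Omega')\neq 0$, so the one-variable function $t_i\mapsto V_\Phi(T)$ is not identically zero and Theorem~\ref{LiuToolTheorem} gives multiplicative convexity in $t_i$ together with positivity.

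Finally, since the index $i$ and the fixed values $\{t_j\}_{j\neq i}$ were arbitrary, this establishes that $V_\Phi(T)$ is everywhere positive and multiplicatively convex in each coordinate separately, which is exactly the statement. The only genuinely delicate point is the non-vanishing bookkeeping in the second step --- making sure that after freezing several coordinates the residual matrix $\Omega'$ still has nonzero regular Fuglede--Kadison determinant so that the ``constantly zero'' branch of Liu's dichotomy cannot occur; but as indicated this is handled by a straightforward induction on the number of coordinates, each step invoking Theorem~\ref{LiuToolTheorem} together with the evaluation at the twisting parameter $1$. Everything else is a direct translation through the algebraic identities of Proposition~\ref{PropertyOfMultiTwist}.
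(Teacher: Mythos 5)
Your proof is correct and follows the same route as the paper: isolate one coordinate via the associativity and commutativity of the Alexander multi-twist and invoke Liu's Theorem~\ref{LiuToolTheorem}. The inductive step you add --- ruling out the ``constantly zero'' branch of Liu's dichotomy by checking that the intermediate matrix $\Omega'$ retains nonzero regular Fuglede--Kadison determinant (each step evaluating at the twisting parameter $1$) --- is a detail the paper's one-line proof leaves implicit, but it is exactly the right argument and is what makes the reduction to Theorem~\ref{LiuToolTheorem} airtight.
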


\begin{proof}
By associativity and commutativity of Alexander multi-twist we have 
\[\kappa(\Phi,T)\Omega=\kappa(\phi_i,t_i)\circ\kappa(\Phi',T')\Omega\]
where $(\Phi',T')$ are variables other than $(\phi_i,t_i)$. The conclusion then follows from applying Theorem \ref{LiuToolTheorem} to each $i$.
\end{proof}

\begin{thm}\label{MultivariableConvexityOfDeterminant}
For any fixed choice of $\Phi$, the multi-variable real function $V_\Phi(T)$ is multiplicatively convex with respect to $T=(t_1,\cdots,t_n)\in \R^n_+.$
\end{thm}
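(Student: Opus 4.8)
The plan is to deduce joint multiplicative convexity from the one-variable statement of Theorem \ref{LiuToolTheorem} by restricting $V_\Phi$ to affine lines and invoking the coordinate-change identities of Proposition \ref{PropertyOfMultiTwist}. Recall that a function $F:\R^n\to\R$ is convex if and only if its restriction to every affine line is convex; hence it suffices to fix $a=(a_1,\dots,a_n)$ and $v=(v_1,\dots,v_n)$ in $\R^n$ and prove that $u\mapsto \log V_\Phi\big(e^{a_1+uv_1},\dots,e^{a_n+uv_n}\big)$ is convex in $u\in\R$. By Proposition \ref{coordinate-convex} the function $V_\Phi$ is everywhere positive, so this logarithm is real-valued and the claim makes sense.

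The first step is to rewrite the multi-twist along such a line. Since $e^{a_i+uv_i}=e^{a_i}\cdot(e^{u})^{v_i}$, the change-of-coordinate identity $\kappa(\phi,t_1^{r_1}t_2^{r_2})=\kappa(r_1\phi,t_1)\circ\kappa(r_2\phi,t_2)$ gives $\kappa(\phi_i,e^{a_i+uv_i})=\kappa(\phi_i,e^{a_i})\circ\kappa(v_i\phi_i,e^{u})$. Composing over $i$ and using associativity and commutativity (Proposition \ref{PropertyOfMultiTwist}(1),(2)), together with the additivity identity $\kappa(\psi_1,t)\circ\kappa(\psi_2,t)=\kappa(\psi_1+\psi_2,t)$ (the $r_1=r_2=1$ case of the first change-of-coordinate formula, iterated), one separates the "fixed" factors from the "moving" ones to obtain
\[
\kappa(\Phi,T)=\alpha\circ\kappa(\psi_v,e^{u}),\qquad \alpha:=\kappa\big(\Phi,(e^{a_1},\dots,e^{a_n})\big),\quad \psi_v:=\textstyle\sum_{i=1}^n v_i\phi_i\in H^1(G;\R),
\]
where crucially neither $\alpha$ nor $\psi_v$ depends on $u$. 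Setting $\Omega':=\alpha(\Omega)$, a fixed square matrix over $\C G$, and using that these entrywise endomorphisms commute, we get $V_\Phi\big(e^{a_1+uv_1},\dots,e^{a_n+uv_n}\big)=\rdet G\big(\kappa(\psi_v,e^{u})\,\Omega'\big)$.

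Now I would apply Theorem \ref{LiuToolTheorem} to the single matrix $\Omega'$ and the single class $\psi_v$: the function $r\mapsto\rdet G(\kappa(\psi_v,r)\Omega')$, $r>0$, is either identically zero or multiplicatively convex. It is not identically zero, since at $r=1$ it equals $\rdet G(\Omega')=V_\Phi(e^{a_1},\dots,e^{a_n})>0$ by Proposition \ref{coordinate-convex}. Hence it is multiplicatively convex, i.e. $u\mapsto\log\rdet G(\kappa(\psi_v,e^{u})\Omega')$ is convex, and this is precisely the restriction of $\log V_\Phi(e^{s_1},\dots,e^{s_n})$ to the chosen line. Since the line was arbitrary, $\log V_\Phi(e^{s_1},\dots,e^{s_n})$ is convex on $\R^n$, which is the assertion.

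The only point requiring care is the algebraic manipulation in the second step: one must verify that the $n$ fixed factors $\kappa(\phi_i,e^{a_i})$ genuinely assemble into the single $u$-independent automorphism $\alpha=\kappa(\Phi,(e^{a_1},\dots,e^{a_n}))$ and that the $n$ moving factors $\kappa(v_i\phi_i,e^{u})$ collapse to $\kappa(\psi_v,e^{u})$. Both are bookkeeping with Proposition \ref{PropertyOfMultiTwist} and the mutual commutativity of all these endomorphisms, so I do not anticipate a genuine obstacle; the conceptual content is entirely carried by Theorem \ref{LiuToolTheorem} (via Liu) plus the elementary fact that every-line convexity characterizes joint convexity, with the reparametrization above serving to make the former directly applicable.
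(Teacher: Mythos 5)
Your proof is correct and takes a genuinely different route from the paper. The paper proceeds by induction on the number of ``active'' coordinates $k$: assuming multiplicative convexity with respect to the first $k-1$ coordinates, it chooses $\psi_1 = \phi_1 + \lambda\phi_k$ and solves $r_1^\lambda r = r_k$, $s_1^\lambda r = s_k$ for $(\lambda,r)$ (possible since $r_1\neq s_1$), so that the $k$-th coordinates of the two comparison points are absorbed into the first coordinate; the $(k-1)$-variable inequality then translates into the $k$-variable one via the change-of-coordinate identity. Your approach instead invokes the characterization of joint convexity by convexity along every affine line and, in one step, collapses \emph{all} of the moving coordinates into a single Alexander twist $\kappa(\psi_v,e^u)$ with $\psi_v=\sum_i v_i\phi_i$, applied to the fixed matrix $\Omega' = \kappa(\Phi,(e^{a_1},\dots,e^{a_n}))\Omega$; Theorem~\ref{LiuToolTheorem} then finishes directly, with the non-vanishing hypothesis supplied by the positivity part of Proposition~\ref{coordinate-convex}. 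Both arguments hinge on the same two ingredients (Liu's one-variable theorem and the coordinate-change identities of Proposition~\ref{PropertyOfMultiTwist}), but yours avoids the induction and the parameter-matching trick entirely, making visible that the multi-variable statement is nothing more than the one-variable statement read off along every direction. The paper's inductive phrasing arguably makes the roles of the individual $\phi_i$ more explicit, but your line-restriction argument is shorter and conceptually cleaner.
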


\begin{proof} We will prove that for any fixed choice of $\Phi$ and every positive integer $k\leqslant n$, the function $V_\Phi(T)$ is multiplicatively convex with respect to the first $k$ coordinates.

The case $k=1$ is proved by Proposition \ref{coordinate-convex}. Assume the claim holds for $(k-1)$ and consider \[V_{\phi_1,\cdots,\phi_{k}}(t_1,\cdots,t_{k})=V_\Phi(T)\]
as a function of the first $k$ variables of $\Phi$ and $T$. It suffices to prove that for any $\theta\in(0,1)$ and any collection of positive numbers $r_1,\cdots,r_k>0$, $s_1,\cdots,s_k>0$, then
\[\left(V_{\phi_1,\cdots,\phi_{k}}(r_1,\cdots,r_k)\right)^\theta\cdot\left(V_{\phi_1,\cdots,\phi_{k}}(s_1,\cdots,s_k)\right)^{1-\theta}\geqslant V_{\phi_1,\cdots,\phi_{k}}(r_1^\theta s_1^{1-\theta},\cdots,r_k^\theta r_k^{1-\theta}).\]
We can assume that $r_1\not= s_1$, otherwise this inequality degenerates to the $(k-1)$ case after permuting the coordinates. Consider $\psi_1=\phi_1+\lambda\phi_k$ for a real number $\lambda$ which will be determined later. We have the identity that for all $t_1,\cdots,t_k>0$,
\[
V_{\psi_1,\phi_2,\cdots,\phi_k}(t_1,\cdots,t_{k-1},t_k)=V_{\phi_1,\phi_2,\cdots,\phi_{k}}(t_1,\cdots,t_{k-1},t_1^{\lambda}t_k).\]
By induction hypothesis, for all $r>0$, we have
\[
\begin{aligned}
 \left(V_{\psi_1,\phi_2,\cdots,\phi_{k}}(r_1,\cdots,r_{k-1},r)\right)^\theta&\cdot\left(V_{\psi_1,\phi_2,\cdots,\phi_{k}}(s_1,\cdots,s_{k-1},r)\right)^{1-\theta}\\
 &\geqslant V_{\psi_1,\phi_2,\cdots,\phi_{k}}\left(r_1^\theta s_1^{1-\theta},\cdots,r_{k-1}^\theta s_{k-1}^{1-\theta},r\right)
\end{aligned}
\]
which is equivalent to
\[
\begin{aligned}
\left(V_{\phi_1,\cdots,\phi_{k}}(r_1,\cdots,r_{k-1},r_1^\lambda r)\right)^\theta&\cdot\left(V_{\phi_1,\cdots,\phi_{k}}(s_1,\cdots,s_{k-1},s_1^\lambda r)\right)^{1-\theta}\\&\geqslant V_{\phi_1,\cdots,\phi_{k}}\left(r_1^\theta s_1^{1-\theta},\cdots,r_{k-1}^\theta s_{k-1}^{1-\theta},(r_1^\lambda r)^\theta\cdot (s_1^\lambda r)^{1-\theta}\right).
\end{aligned}
\]
Since $r_1\not=s_1$, we can prescribe $\lambda\in\R$ and $r>0$ by solving the following equations
\[r_1^\lambda r=r_k,\quad s_2^\lambda r=s_k.\]
This finishes the induction.
\end{proof}

\begin{cor}\label{convex}
For any fixed $(\Phi,T)\in \prod_{i=1}^n H^1(G;\R)\times\R^n_+$, the function $W_{\Phi,T}:\R^n\ra \R$,
\[W_{\Phi,T}(s_1,\cdots,s_n):=\log\left(V_{s_1\phi_1,\cdots,s_n\phi_s}(T)\right)\]
is convex. In particular it is continuous.
\end{cor}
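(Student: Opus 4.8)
The plan is to reduce the statement to Theorem \ref{MultivariableConvexityOfDeterminant} by a change of variables, after which convexity is preserved for the usual abstract reasons.

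First I would rewrite $W_{\Phi,T}$ purely in terms of the fixed matrix $\Omega$ and the fixed data $(\Phi,T)$. Using associativity of the Alexander multi-twist (Proposition \ref{PropertyOfMultiTwist}(1)) together with the change-of-coordinate identity $\kappa(s\phi,t)=\kappa(\phi,t^{s})$ (immediate from Proposition \ref{PropertyOfMultiTwist}(3), taking one summand to be zero), one obtains, with $T=(t_1,\dots,t_n)$ held fixed,
\[\kappa\big((s_1\phi_1,\dots,s_n\phi_n),\,T\big)=\kappa\big(\phi_1,t_1^{s_1}\big)\circ\cdots\circ\kappa\big(\phi_n,t_n^{s_n}\big)=\kappa\big(\Phi,\,(t_1^{s_1},\dots,t_n^{s_n})\big),\]
and hence $V_{s_1\phi_1,\dots,s_n\phi_n}(T)=V_\Phi(t_1^{s_1},\dots,t_n^{s_n})$. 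Since $\kappa(\Phi,(1,\dots,1))$ is the identity, $V_\Phi(1,\dots,1)=\rdet G(\Omega)\neq 0$, so $V_\Phi$ is not identically zero; by Theorem \ref{MultivariableConvexityOfDeterminant} it is then multiplicatively convex and, in particular, everywhere positive, so $W_{\Phi,T}$ is finite-valued on all of $\R^n$.

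Second, I would unwind what multiplicative convexity of the multivariable function $V_\Phi$ means: it is exactly the assertion that $F_\Phi(u_1,\dots,u_n):=\log V_\Phi(e^{u_1},\dots,e^{u_n})$ is convex on $\R^n$ (this is the form in which Theorem \ref{MultivariableConvexityOfDeterminant} is proved). Writing $a_i:=\log t_i$, the identity above becomes
\[W_{\Phi,T}(s_1,\dots,s_n)=\log V_\Phi\big(e^{a_1 s_1},\dots,e^{a_n s_n}\big)=F_\Phi(a_1 s_1,\dots,a_n s_n),\]
so $W_{\Phi,T}$ is the composition of the convex function $F_\Phi$ with the linear map $(s_1,\dots,s_n)\mapsto(a_1 s_1,\dots,a_n s_n)$. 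Precomposition of a convex function with an affine map is convex (no injectivity is needed, so the possible vanishing of some $a_i$ causes no trouble), hence $W_{\Phi,T}$ is convex; and a convex function that is finite everywhere on $\R^n$ is automatically continuous, which gives the last clause.

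There is no substantial obstacle here — all the mathematical content lives in Theorem \ref{MultivariableConvexityOfDeterminant} — so the only point needing care is the bookkeeping of the change of variables: confirming that holding $T$ fixed while rescaling the cohomology classes by $s_i$ is the same as holding $\Phi$ fixed while replacing $t_i$ by $t_i^{s_i}$, so that convexity in the $T$-variables transfers correctly to convexity in the $s$-variables.
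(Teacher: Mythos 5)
Your proof is correct and follows exactly the paper's argument: both establish the identity $V_{s_1\phi_1,\dots,s_n\phi_n}(T)=V_\Phi(t_1^{s_1},\dots,t_n^{s_n})$ and then invoke the multiplicative convexity from Theorem \ref{MultivariableConvexityOfDeterminant}. You have merely spelled out the routine change-of-variables bookkeeping (composition with the linear map $s_i\mapsto a_i s_i$, and the remark that finite convex functions on $\R^n$ are continuous) that the paper's one-line proof leaves implicit.
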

\begin{proof}
This follows from the identity
\[W_{\Phi,T}(s_1,\cdots,s_n):=\log\left(V_{s_1\phi_1,\cdots,s_n\phi_s}(T)\right)=\log\left(V_\Phi(t_1^{s_1},\cdots,t_n^{s_n})\right)\]
and the multiplicatively convexity of $V_\Phi(T)$.
\end{proof}

\begin{thm}\label{ContMatrixCohomology}
The regular Fuglede-Kadison determinant map
$\rdet G(\kappa(\phi,t)\Omega)$ is continuous with respect to $(\phi,t)\in H^1(G;\R)\times\R_+$.
\end{thm}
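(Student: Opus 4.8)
The plan is to upgrade the separate-variable and restricted continuity statements already obtained—Theorem \ref{MultivariableConvexityOfDeterminant} and Corollary \ref{convex}—into joint continuity in the pair $(\phi,t)$. Fix a point $(\phi_0,t_0)$ at which we want continuity, with $t_0>0$. First I would reduce the $t$-dependence away: using the change-of-coordinate identity $\kappa(\phi,t)=\kappa(\phi,t_0)\circ\kappa(\phi,t/t_0)$ together with the factorization $\kappa(\phi,t/t_0)=\kappa((\log(t/t_0))\phi,e)$ from Proposition \ref{PropertyOfMultiTwist}(3), the value $\rdet G(\kappa(\phi,t)\Omega)$ is seen to be of the form $\rdet G(\kappa(\psi,e)\Omega)$ for $\psi=\phi+\log(t/t_0)\cdot\phi$ a continuous function of $(\phi,t)$; more cleanly, it suffices to prove that $\psi\mapsto \rdet G(\kappa(\psi,e)\Omega)$ is continuous on all of $H^1(G;\R)$, since $(\phi,t)\mapsto t^{\phi}$ (meaning the cohomology class $g\mapsto \phi(g)\log t$, i.e. $(\log t)\phi\in H^1(G;\R)$, added to nothing) is continuous. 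So I restrict attention to the single-parameter family $\psi\mapsto V(\psi):=\rdet G(\kappa(\psi,e)\Omega)$ and aim to show it is continuous on the finite-dimensional real vector space $H^1(G;\R)$.

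The key point is that $V$ is log-convex along every affine line in $H^1(G;\R)$: for a line $\psi=\psi_1+s\psi_2$, we have $\kappa(\psi_1+s\psi_2,e)\Omega=\kappa(\psi_2,e^{s})\circ\kappa(\psi_1,e)\Omega$, and applying Theorem \ref{LiuToolTheorem} to the matrix $\kappa(\psi_1,e)\Omega$ with the class $\psi_2$ shows $s\mapsto V(\psi_1+s\psi_2)$ is either identically zero or multiplicatively convex; since $\rdet G(\Omega)\neq 0$ it is nonzero on every line, hence multiplicatively convex, so $s\mapsto\log V(\psi_1+s\psi_2)$ is a finite convex function of $s\in\R$. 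A function on a finite-dimensional real vector space that is finite and convex on every line is convex (this is the standard fact that separate/line-wise convexity plus finiteness gives convexity on $\R^k$; concretely, restrict to a $2$-plane and use that a midpoint-convex finite function that is convex in each of two transverse directions is convex, or simply invoke that $\log V$ is convex in each coordinate of a basis of $H^1(G;\R)$, which is exactly Corollary \ref{convex}). A finite convex function on $\R^k$ is automatically continuous; hence $\log V$ is continuous on $H^1(G;\R)$, and therefore so is $V$ and, unwinding the reduction, so is $(\phi,t)\mapsto\rdet G(\kappa(\phi,t)\Omega)$.

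I expect the only real subtlety to be the passage from line-wise (or coordinate-wise) log-convexity to genuine joint convexity on $H^1(G;\R)$: one must be careful that Corollary \ref{convex} already delivers convexity in each coordinate of a fixed basis, and then invoke the classical theorem that a function on $\R^k$ which is finite-valued and convex along every line—equivalently, here, whose restriction to each $2$-dimensional affine subspace is convex, which follows from line-wise convexity together with the everywhere-positivity of $V$ from Theorem \ref{MultivariableConvexityOfDeterminant}—is convex, hence continuous. Everything else is a routine application of the change-of-coordinate formulas in Proposition \ref{PropertyOfMultiTwist} and the multiplicative convexity already established.
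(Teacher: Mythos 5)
Your proof is correct, and arguably more economical than the paper's. The paper's own proof routes through Corollary~\ref{convex}, and that corollary rests on the joint multiplicative convexity established by the inductive argument of Theorem~\ref{MultivariableConvexityOfDeterminant} (the substitution $\psi_1=\phi_1+\lambda\phi_k$ reducing the number of twist variables one at a time). You observe instead that the identity $\kappa(\phi,t)=\kappa((\log t)\phi,e)$ reduces everything to the single function $V(\psi)=\rdet G(\kappa(\psi,e)\Omega)$ on $H^1(G;\R)$, and that for any affine line $s\mapsto\psi_1+s\psi_2$ the change-of-coordinate formula $\kappa(\psi_1+s\psi_2,e)=\kappa(\psi_2,e^s)\circ\kappa(\psi_1,e)$ together with Theorem~\ref{LiuToolTheorem} (applied to the $\C G$-matrix $\kappa(\psi_1,e)\Omega$ and the class $\psi_2$, with non-vanishing guaranteed by the everywhere-positivity of $V$) gives log-convexity of $V$ along that line. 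Since convexity of a finite real-valued function on a finite-dimensional real vector space is, by definition, a statement about every line segment, this already yields convexity --- hence local Lipschitz continuity --- of $\log V$; no induction on the number of twist variables is required. In effect this is the same observation that Theorem~\ref{MultivariableConvexityOfDeterminant} encodes, but extracted directly.

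Two small corrections. Your initial reduction formula $\psi=\phi+\log(t/t_0)\cdot\phi$ is an algebra slip; the correct identity, which you state a moment later, is simply $\kappa(\phi,t)=\kappa((\log t)\phi,e)$. And the parenthetical alternatives you offer for passing from line-wise to joint convexity are both false as stated: a function convex in two transverse directions, or coordinate-wise convex in a fixed basis, need not be jointly convex (consider $f(x,y)=xy$), and Corollary~\ref{convex} asserts genuine joint convexity of $W_{\Phi,T}$, not merely coordinate-wise convexity, so the appeal to it as a ``coordinate-wise'' statement misreads it. Fortunately your actual argument does not lean on these asides: you establish convexity of $\log V$ along \emph{every} line, which is literally the definition of convexity, and that is all that is needed.
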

\begin{proof}
Let $\Psi=(\psi_1,\cdots,\psi_k)$ be a basis for the real vector space $H^1(G;\R)$. Suppose
\[\phi=\sum_{i=1}^k c_j \psi_j,\quad 1\leqslant i\leqslant n,\]
where the coefficients $c_j$ are continuous with respect to $\phi\in H^1(G;\R)$. Then
\[
\begin{aligned}
\kappa(\phi,t)\Omega&= \kappa(c_1\psi_1,t)\circ \cdots \circ\kappa(c_k\psi_k,t)\Omega\\
&= \kappa(c_1\log t \cdot\psi_1,e)\circ \cdots \circ\kappa(c_k\log t\cdot \psi_k,e)\Omega\\
&=\kappa\Big((c_1\log t\cdot\psi_1,\cdots,c_k\log t\cdot\psi_k),(e,\cdots,e)\Big)\Omega.
\end{aligned}\]
By definition we have 
\[\rdet G(\kappa(\phi,t)\Omega)=\exp W_{\Psi,(e,\cdots,e)}(c_1\log t,\cdots,c_k\log t).\]
The continuity follows from corollary \ref{convex}.
\end{proof}

\subsection{Applications to 3-manifolds}
We are now ready to prove Theorem \ref{ContwrtCohomology}.

\begin{proof}[{Proof of Theorem \ref{ContwrtCohomology}}]
If $N$ is a graph manifold, then Theorem \ref{TwistiedLtwoAlexanderGraph} offers an explicit formula for the $L^2$-Alexander torsion, the theorem holds since the Thurston norm is continuous in $H^1(N;\R)$.

If $N$ is a compact connected orientable irreducible 3-manifold which is hyperbolic or mixed, then as in the proof of Theorem \ref{TorsionForHyper}, we can find a regular finite covering $p:\widetilde N\ra N$ of degree $d$. Since by Lemma \ref{BasicProppertyTorsion} we have \[\tautwo(N,\psi_t\oplus\psi_{t^{-1}})^d=\tautwo(\widetilde N,p^*\psi_t\oplus p^*\psi_{t^{-1}}),\]
and then $A^{(2)}_{\operatorname{sym}}(N,\psi,t)^d=A^{(2)}_{\operatorname{sym}}(\widetilde N,p^*\psi,t)$. Note that the pullback map $p^*:H^1(N;\R)\ra H^1(\widetilde N;\R)$ is a continuous embedding, we only need to prove the theorem for $\widetilde N$. So we can assume without loss of generality that our manifold $N$ fibers over circle. From proof of Theorem \ref{TorsionForHyper} we see that
\[A^{(2)}(N,\psi,t)=\rdet G  (\kappa(\psi,t)T)\cdot \rdet G(\kappa(\psi,t)S)^{-2}\]
where $T=I^{k\times k}-hA_\rho$, $S=1-h$ are square matrices over $\C G$ with positive regular Fuglede-Kadison determinant. The conclusion follows immediately from Theorem \ref{ContMatrixCohomology}.
\end{proof}

The continuity result can be used to improve the calculation of the $L^2$-Alexander torsion associated to fibered classes.

\begin{thm}
Let $N$ be any compact, connected, irreducible, orientable 3-manifold with empty or incompressible toral boundary. Suppose $\pi_1(N)$ is infinite, $N\not=S^1\times D^2$ and $N\not=S^1\times S^2$. Let $\phi\in H^1(N;\R)$ be in the interior of a fibered cone. Then there exists a representative of $L^2$-Alexander torsion associated to $(\phi,t)$ such that
\[A^{(2)}(N,\phi,t)=\left\{
\begin{aligned}
 &1,\quad t<\frac1{h(\phi)},\\
 &t^{x_N(\phi)},\quad t>h(\phi)
\end{aligned}\right.\]
where $h(\phi)$ is the entropy function defined on the fibered cone of $H^1(N;\R)$ (compare \cite[Section 8]{dubois2015l2}).
\end{thm}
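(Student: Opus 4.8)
The plan is to reduce to integral fibered classes, where the two monomial rays are already known, and then propagate the formula to every real class in the interior of the fibered cone by continuity (Theorem~\ref{ContwrtCohomology}) together with density of rational classes. Throughout, fix a CW structure for $N$, so $A^{(2)}(N,\psi,t)$ is a bona fide function of $(\psi,t)\in H^1(N;\R)\times\R^+$, continuous by Theorem~\ref{ContwrtCohomology}; let $C$ denote the fibered cone with $\phi$ in its interior. For graph manifolds the statement is already contained in Theorem~\ref{TwistiedLtwoAlexanderGraph} (there $h\equiv 1$, so both thresholds are $1$ and the formula reads $\max\{1,t^{x_N(\phi)}\}$), so the interest is when $N$ has a hyperbolic piece; the argument below is uniform and does not separate cases. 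Recall from \cite[Section 8]{dubois2015l2} that $h$ is continuous on the interior of $C$ and satisfies $h(\lambda\psi)=h(\psi)^{1/\lambda}$ for $\lambda>0$, while the Thurston seminorm $x_N$ is continuous on $H^1(N;\R)$ and homogeneous of degree $1$. Finally recall the integral case: if $\psi\in H^1(N;\Z)$ lies in the interior of $C$ then $\psi$ is a fibered class, and the mapping-torus computation of \cite[Theorem 8.5]{dubois2016l2} (cf.\ \cite[Section 8]{dubois2015l2}) produces a representative of $A^{(2)}(N,\psi,\cdot)$ that equals $1$ on $(0,1/h(\psi)]$ and $t^{x_N(\psi)}$ on $[h(\psi),+\infty)$.

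First I would upgrade this from integral to rational classes. Since $(\lambda\psi)_t(g)=t^{\lambda\psi(g)}=\psi_{t^\lambda}(g)$, for the fixed CW structure the twisted complexes $C^{(2)}_*(N,(\lambda\psi)_t)$ and $C^{(2)}_*(N,\psi_{t^\lambda})$ literally coincide, hence $A^{(2)}(N,\lambda\psi,t)=A^{(2)}(N,\psi,t^\lambda)$. Writing any rational $\psi'$ in the interior of $C$ as $\psi'=\psi/m$ with $\psi$ integral (still in the interior of $C$, as $C$ is a cone) and combining with the homogeneities of $h$ and $x_N$, the behaviour recalled above propagates verbatim to $\psi'$. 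It is cleanest to record this using the symmetric torsion: $A^{(2)}_{\operatorname{sym}}(N,\psi,t)^2=\tautwo(N,\psi_t\oplus\psi_{t^{-1}})$ is independent of the cellular basis and continuous in $(\psi,t)$ by Theorem~\ref{ContwrtCohomology}, and $A^{(2)}_{\operatorname{sym}}=t^{\psi(c_1(e))/2}A^{(2)}$, so the rational computation becomes
\[A^{(2)}_{\operatorname{sym}}(N,\psi,t)=\max\{t^{x_N(\psi)/2},\,t^{-x_N(\psi)/2}\}\qquad\text{for rational }\psi\in\operatorname{int}C,\ t\notin\big(1/h(\psi),h(\psi)\big).\]

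Now fix $t_0>h(\phi)$, the case $t_0<1/h(\phi)$ being symmetric. Pick rational classes $\phi_m\to\phi$ inside the interior of $C$; by continuity of $h$ there we have $h(\phi_m)<t_0$ for all large $m$, so the displayed identity gives $A^{(2)}_{\operatorname{sym}}(N,\phi_m,t_0)=t_0^{x_N(\phi_m)/2}$. Letting $m\to\infty$ and using continuity of $A^{(2)}_{\operatorname{sym}}$ and of $x_N$, we get $A^{(2)}_{\operatorname{sym}}(N,\phi,t_0)=t_0^{x_N(\phi)/2}$; symmetrically $A^{(2)}_{\operatorname{sym}}(N,\phi,t_0)=t_0^{-x_N(\phi)/2}$ when $t_0<1/h(\phi)$. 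Unwinding $A^{(2)}_{\operatorname{sym}}=t^{\phi(c_1(e))/2}A^{(2)}$, the representative $t\mapsto t^{(\phi(c_1(e))+x_N(\phi))/2}\,A^{(2)}(N,\phi,t)$ then equals $1$ for $t<1/h(\phi)$ and $t^{x_N(\phi)}$ for $t>h(\phi)$, which is the claim (here $x_N(\phi)$ and $h(\phi)$ make sense for irrational $\phi$ because $x_N$ is a seminorm on $H^1(N;\R)$ and $h$ is defined on the interior of $C$).

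The part requiring the most care is not the limit but the bookkeeping that keeps it honest: one must verify that the scaling relation $A^{(2)}(N,\lambda\psi,t)=A^{(2)}(N,\psi,t^\lambda)$, the identity $h(\lambda\psi)=h(\psi)^{1/\lambda}$, the degree-$1$ homogeneity of $x_N$, and the affine dependence of the normalising exponent $\tfrac12(\psi(c_1(e))+x_N(\psi))$ on $\psi$ are mutually consistent, so that what survives the limit are the \emph{monic} monomials $1$ and $t^{x_N(\phi)}$, not monomials up to an uncontrolled positive constant. This in turn relies on having the integral base case in exactly this monic form, which is where \cite[Theorem 8.5]{dubois2016l2} is used; granted that and Theorem~\ref{ContwrtCohomology}, no further analytic difficulty arises.
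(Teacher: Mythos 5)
Your proposal takes essentially the same route as the paper: approximate $\phi$ by rational classes $\phi_m$ in the interior of the fibered cone, invoke the known mapping-torus formula for each $\phi_m$, and pass to the limit using Theorem~\ref{ContwrtCohomology} together with continuity of $h$ and $x_N$. The one respect in which yours is more careful is the normalization bookkeeping: the paper simply writes $A^{(2)}(N,\phi_n,t)\to A^{(2)}(N,\phi,t)$ without addressing that each $A^{(2)}(N,\phi_n,\cdot)$ is only well-defined up to a monomial $t^{r_n}$, whereas you route the argument through the basis-independent symmetric torsion $A^{(2)}_{\operatorname{sym}}$ and then explicitly unwind the normalizing exponent $\tfrac12(\psi(c_1(e))+x_N(\psi))$, which correctly justifies that the limiting representative is the monic one claimed in the statement; you also spell out the integral-to-rational scaling step, which the paper leaves implicit in the citation. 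These are refinements, not a different strategy.
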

\begin{proof}
Let $\phi_n\in H^1(G;\Q)$ be a sequence in the fibered cone that converge to $\phi$.
By \cite[Theorem 8.5]{dubois2015l2}, for any $n$ we have
\[A^{(2)}(N,\phi_n,t)=\left\{
\begin{aligned}
 &1,\quad t<\frac1{h(\phi_n)},\\
 &t^{x_N(\phi_n)},\quad t>h(\phi_n).
\end{aligned}\right.\]
By Theorem \ref{ContwrtCohomology} we have
\[A^{(2)}(N,\phi_n,t)\ra A^{(2)}(N,\phi,t),\quad n\ra\infty\]
for any $t\in\R$.
Since the entropy and the Thurston norm are continous functions of $H^1(N;\R)$, we have
\[h(\phi_n)\ra h(\phi),\quad x_N(\phi_n)\ra x_N(\phi),\quad n\ra \infty.\]
This proves our claim.
\end{proof}

\bibliography{ref.bib}
\end{document}